\newtheorem{theorem}[equation]{Theorem}
\newtheorem{lemma}[equation]{Lemma}
\newtheorem{corollary}[equation]{Corollary}
\newtheorem{proposition}[equation]{Proposition}
\numberwithin{equation}{section}
\theoremstyle{definition}
\newtheorem*{example*}{Example}
\newtheorem{remark}[equation]{Remark}
\newtheorem*{remark*}{Remark}
\newcommand{\bA}{{\mathbb A}}
\newcommand{\bZ}{{\mathbb Z}}
\newcommand{\frg}{{\mathfrak g}}
\newcommand{\frf}{{\mathfrak f}}
\newcommand{\frt}{{\mathfrak t}}
\newcommand{\frs}{{\mathfrak s}}
\newcommand{\calB}{{\mathcal B}}
\newcommand{\calO}{{\mathcal O}}
\newcommand{\subo}{_{\bar 0}}
\newcommand{\subuno}{_{\bar 1}}
\DeclareMathOperator{\eespan}{span}
\providecommand{\espan}[1]{\eespan\left\{ #1\right\}}
 \newcommand{\tri}{\mathfrak{tri}}
 \newcommand{\fro}{\mathfrak{o}}
 \newcommand{\frsl}{{\mathfrak{sl}}}
 \DeclareMathOperator{\tr}{tr}
 \DeclareMathOperator{\ad}{ad}
 \DeclareMathOperator{\der}{\mathfrak{der}}
 \DeclareMathOperator{\End}{End}
 \DeclareMathOperator{\Mat}{Mat}
 \DeclareMathOperator{\aalg}{alg}
\providecommand{\alg}[1]{\aalg\left\langle #1\right\rangle}
\def\bigstrut{\vrule height 14pt width 0ptdepth 2pt}
\def\hregleta{\hrule height .5pt}
\def\hreglon{\hrule height1pt}
\def\vreglon{\vrule height 14pt width1pt depth 4pt}
\def\vregleta{\vrule width .5pt}
\def\hreglonfill{\leaders\hreglon\hfill}
\def\hregletafill{\leaders\hregleta\hfill}
\newenvironment{romanenumerate}
 {\begin{enumerate}
 
 }{\end{enumerate}}
\begin{document}

\title{Gradings on Symmetric Composition Algebras}

\author[Alberto Elduque]{Alberto Elduque$^{\star}$}
 \thanks{$^{\star}$ Supported by the Spanish Ministerio de
 Educaci\'{o}n y Ciencia
 and FEDER (MTM 2007-67884-C04-02) and by the
Diputaci\'on General de Arag\'on (Grupo de Investigaci\'on de
\'Algebra)}
 \address{Departamento de Matem\'aticas e
 Instituto Universitario de Matem\'aticas y Aplicaciones,
 Universidad de Zaragoza, 50009 Zaragoza, Spain}
 \email{elduque@unizar.es}


\date{\today}

\subjclass[2000]{Primary 17A75, 17B25}

\keywords{Grading, composition algebra, symmetric, exceptional Lie algebra}

\begin{abstract}
The group gradings on the symmetric composition algebras over arbitrary fields are classified. Applications of this result to gradings on the exceptional simple Lie algebras are considered too.
\end{abstract}

\maketitle


\section{Introduction}\label{se:Introduction}

The gradings over groups of the simple classical Lie algebras (other than $D_4$) over an algebraically closed field of characteristic $0$ were considered in \cite{PateraetalI,PateraetalII}. A description of all the possibilities, as well as of gradings on simple Jordan algebras, appears in \cite{BahturinShestakovZaicev}, based on previous results on gradings on the simple associative algebras (that is, on matrix algebras) in \cite{Bahturinetal}.

On the other hand, the gradings on octonion algebras were classified over arbitrary fields in \cite{GradingsOctonions}.

Quite recently, there has been a renewed interest in gradings on exceptional simple Lie algebras over algebraically closed fields of characteristic $0$. Not surprisingly, the gradings on $G_2$ (\cite{CristinaGradingsG2,BahturinTvalavadze}) are strongly related to the gradings on the octonions. Gradins on $D_4$ have been considered in \cite{DMV}, while the gradings on $F_4$ and on the exceptional simple Jordan algebra (or Albert algebra) have been classified in \cite{DraperMartinF4}. In these latter papers, still the gradings on the octonions play an important role, but new possibilities appear.

Some of these new possibilities can be explained in terms of gradings of the so called symmetric composition algebras. These form a class of not necessarily unital composition algebras with quite nice properties (see \cite[Chapter 8]{KMRT} and the references there in).

\smallskip

The aim of this paper is the complete classification of the group gradings on symmetric composition algebras over arbitrary fields. Applications of these gradings to the description of some interesting fine gradings on exceptional simple Lie algebras will be given too.

\medskip

Let us first review the basic definitions on gradings that will be used throughout the paper.

Following \cite{PateraetalI}, a grading on a (not necessarily associative) algebra $A$ over a field $k$ is a decomposition
\begin{equation}\label{eq:AoplusAg}
A=\oplus_{g\in G}A_g,
\end{equation}
into a direct sum of subspaces $A_g$, $g\in G$, such that for any two indices $g_1,g_2\in G$, either $A_{g_1}A_{g_2}=0$ or there exists an element $g_3\in G$ such that $0\ne A_{g_1}A_{g_2}\subseteq A_{g_3}$.

Gradings give a very useful tool to study complicated objects by splitting them into nicer smaller components.

\smallskip

Given another grading
\begin{equation}\label{eq:AoplusAh}
A=\oplus_{h\in H}A_h,
\end{equation}
of $A$, the grading in \eqref{eq:AoplusAg} is said to be a \emph{coarsening} of the one in \eqref{eq:AoplusAh}, and then this latter one is called a \emph{refinement} of the former, in case for any $h\in H$ there is a $g\in G$ with $A_h\subseteq A_g$. In other words, for any $h\in H$, the subspace $A_h$ is a direct sum of subspaces $A_g$.
A \emph{fine} grading is a grading which admits no proper refinement.

Two gradings \eqref{eq:AoplusAg} and \eqref{eq:AoplusAh} are said to be \emph{equivalent} if there is an automorphism $\varphi$ of $A$ such that for any $g\in G$ with $A_g\ne 0$, there is an $h\in H$ with $\varphi(A_g)=A_h$.

\medskip

The most interesting gradings are those for which the index set $G$ is a group and for any $g_1,g_2\in G$, $A_{g_1}A_{g_2}\subseteq A_{g_1g_2}$. These are called \emph{group gradings}. This is not always the case. See \cite{Elduquenosemigroup} for an example of a grading on a Lie algebra which is not even a grading over a semigroup. Also, a grading of the split Cayley algebra will be given in Remark \ref{re:nongroupgrading}, which is not a group grading.

Given a group grading $A=\oplus_{g\in G}A_g$, it will be always assumed that the group $G$ is generated by its subset $\{g\in G:A_g\ne 0\}$.

It is proved in \cite{GradingsOctonions} that the grading group of any Cayley algebra is always abelian.

\smallskip

In order to avoid equivalent gradings, given any grading $A=\oplus_{g\in G}A_g$ of an arbitrary algebra, we will consider the \emph{universal grading group}, which is defined (see \cite{Elduquenosemigroup} or \cite{CristinaGradingsG2}) as the quotient $\hat G=\bZ (G)/R$ of the abelian group $\bZ(G)$ freely generated by the set $G$, modulo the subgroup $R$ generated by the set $\{a+b-c: a,b,c\in G,\ 0\ne A_aA_b\subseteq A_c\}$. Then $A$ is $\hat G$-graded with $A_\gamma=\sum\{A_g:g+R=\gamma\}$.

It is clear that if the given grading $A=\oplus_{g\in G}A_g$ is already a group grading with abelian $G$, then $G$ is a quotient of the universal grading group $\hat G$ and the given grading is equivalent to the new grading $A=\oplus_{\gamma\in\hat G}A_\gamma$ (here the automorphism $\varphi$ can be taken to be the identity). Therefore, in dealing with gradings over abelian groups, up to equivalence, it is enough to consider the universal grading groups.

\medskip

The paper is structured as follows:

Section \ref{se:Compo} will review some properties of the unital composition algebras. The classification of the gradings on octonion algebras in \cite{GradingsOctonions} will be reviewed in a way suitable for our purposes. Section \ref{se:SymCompo} will introduce the symmetric composition algebras. Their main features will be recalled and even some new results proved. Roughly speaking, these algebras split into two classes: para-Hurwitz algebras and Okubo algebras. But while para-Hurwitz algebras inherit the gradings of their Hurwitz (that is, classical unital composition algebras) counterparts, Okubo algebras present natural gradings over $\bZ_3^2$, which are not present in the para-Hurwitz case. Besides, the split Okubo algebra (that is, the so called \emph{pseudo-octonion} algebra) can be presented in several different ways, each one giving rise to a grading.

Section \ref{se:GradSymCompo} is the core of the paper. Here a complete classification of the group gradings of the symmetric composition algebras is given in Theorem \ref{th:gradingssymmetric}. It turns out that over an algebraically closed field of characteristic $\ne 3$, any fine grading of the pseudo-octonion algebra is either a $\bZ^2$-grading, which comes from the weight space decomposition relative to a Cartan subalgebra of its Lie algebra of derivations, or the $\bZ_3^2$-grading mentioned above.

Finally, Section \ref{se:GradLie} is devoted to show how the different gradings on symmetric composition algebras, together with the construction of the exceptional simple Lie algebras in terms of two such composition algebras given in \cite{Eld04Ibero}, can be combined to obtain fine gradings on the latter algebras. In particular, two interesting $\bZ_3^5$ and $\bZ_2^8$-gradings of $E_8$ will be obtained. A natural coarsening of the $\bZ_2^8$-grading gives a $\bZ_2^5$-grading $\frg=\oplus_{0\ne a\in\bZ_2^5}\frg_a$, where $\frg_a$ is a Cartan subalgebra of $\frg$ for any $a$, thus obtaining the Dempwolff decomposition considered in \cite{Thompson}.

\bigskip

\section{Composition algebras}\label{se:Compo}

This section will be devoted to review some known facts and features of composition algebras. For details one may consult \cite[Chapter 8]{KMRT} or \cite[Chapter 2]{ZSSS}.

Composition algebras with a unity element constitute a well-known class of algebras. They generalize the classical algebras of the reals, complex, quaternions and octonions.

A triple $(C,\cdot,n)$ consisting of a vector space $C$ over a ground field $k$, endowed with a bilinear multiplication $C\times C\rightarrow k$, $(x,y)\mapsto x\cdot y$, and a nondegenerate quadratic form $n:C\rightarrow k$ permitting composition, that is $n(x\cdot y)=n(x)n(y)$, is called a \emph{composition algebra}. Here the norm being nondegenerate will mean that its associated polar form defined by $n(x,y)=n(x+y)-n(x)-n(y)$ is nondegenerate: $\{x\in C: n(x,y)=0\ \textrm{for any $y\in C$}\}=0$.

For simplicity, sometimes we will refer simply to the composition algebra $C$, if the underlying multiplication and norm are clear from the context.

\smallskip

The unital composition algebras, also termed \emph{Hurwitz algebras}, form a class of degree two algebras, as any element satisfies the Cayley-Hamilton equation:
\begin{equation}\label{eq:CayleyHamilton}
x^{\cdot 2}-n(x,1)x+n(x)1=0
\end{equation}
for any $x$. Besides, they are endowed with an antiautomorphism, the \emph{standard conjugation}, defined by:
\begin{equation}\label{eq:conjugation}
\bar x=n(x,1)1-x,
\end{equation}
which has the following properties:
\begin{equation}\label{eq:propconjugation}
\bar{\bar x}=x,\quad x+\bar x=n(x,1)1,\quad x\cdot \bar x=\bar x\cdot x=n(x)1,
\end{equation}
for any $x$.

\smallskip

The Hurwitz algebras can always be obtained by the so called \emph{Cayley-Dickson doubling process}. Let $(B,\cdot,n)$ be an associative Hurwitz algebra, and let $\lambda$ be a nonzero scalar in the ground field $k$. Then the direct sum of two copies of $B$: $C=B\oplus Bu$, is endowed with a multiplication and nondegenerate norm that extend those on $B$, and are given by:
\begin{equation}\label{eq:CDdoubling}
\begin{split}
&(a+bu)\cdot (c+du)=(a\cdot c+\lambda \bar d\cdot b)+(d\cdot a+b\cdot\bar c)u,\\
&n(a+bu)=n(a)-\lambda n(b),
\end{split}
\end{equation}
for any $a,b,c,d\in B$. It turns out that $(C,\cdot,n)$ is again a Hurwitz algebra, which is denoted by $C=CD(B,\lambda)$ (see \cite[\S 33]{KMRT}).
Note that the two copies of $B$: $B1$ and $Bu$, in this construction are orthogonal relative to the norm, and that $n(u)=-\lambda$. Whenever the Hurwitz algebra $B$ is itself obtained by the Cayley-Dickson doubling process: $B=CD(A,\mu)$, we will write $C=CD\bigl(CD(A,\mu),\lambda\bigr)=CD(A,\mu,\lambda)$.

\begin{theorem} \textbf{(Generalized Hurwitz Theorem)} (see \cite[page 32]{ZSSS})\newline
Every Hurwitz algebra over a field $k$ is isomorphic to one of the following types:
\begin{romanenumerate}
\item The ground field $k$ if its characteristic is $\ne 2$.
\item A quadratic commutative and associative separable algebra $K(\mu)=k1+kv$, with $v^2=v+\mu$ and $4\mu +1\ne 0$. The norm is given by its generic norm.
\item A quaternion algebra $Q(\mu,\beta)=CD(K(\mu),\beta)$. (These four dimensional algebras are associative but not commutative.)
\item A Cayley algebra $C(\mu,\beta,\gamma)=CD(K(\mu),\beta,\gamma)$. (These eight dimensional algebras are alternative, but not associative.)
\end{romanenumerate}
\end{theorem}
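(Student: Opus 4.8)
The plan is to follow the classical proof of Hurwitz's theorem, adapted to arbitrary fields and base the argument on the Cayley–Dickson doubling process together with the properties \eqref{eq:CayleyHamilton}--\eqref{eq:propconjugation}. First I would establish the basic structural lemmas: the polar form $n(x,y)$ is associative in the sense that $n(x\cdot y,z)=n(y,\bar x\cdot z)=n(x,z\cdot\bar y)$, and the various Moufang-type and conjugation identities such as $\overline{x\cdot y}=\bar y\cdot\bar x$, all of which follow formally from linearizing $n(x\cdot y)=n(x)n(y)$ and using \eqref{eq:conjugation}. A subtlety worth isolating at the outset: in characteristic $2$ one cannot normalize via $\frac12 n(x,1)$, so one must work throughout with the polar form and the conjugation \eqref{eq:conjugation} rather than with ``trace'' and ``imaginary part'' decompositions; this is the source of the exceptional case (i) in the statement.

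Next I would run the doubling argument. Start with the (necessarily nondegenerate) restriction of $n$ to the subalgebra generated by $1$. If $\charac k\ne 2$ this is just $k1$, giving case (i). If $\charac k=2$, or in any case if $k1$ is a proper nondegenerate subalgebra, one picks an element $v$ outside $k1$ on which $n$ is nondegenerate; using the Cayley–Hamilton equation \eqref{eq:CayleyHamilton} one shows $k1+kv$ is a subalgebra isomorphic to some $K(\mu)=k1+kv$ with $v^2=v+\mu$, and the condition $4\mu+1\ne 0$ is exactly nondegeneracy of the norm on $K(\mu)$. The heart of the argument is then the standard claim: if $B\subsetneq C$ is a proper unital subalgebra on which $n$ is nondegenerate, then picking $u\in B^\perp$ with $\lambda:=-n(u)\ne 0$, the subspace $B\oplus Bu$ is again a subalgebra, the multiplication is forced to be exactly \eqref{eq:CDdoubling}, and $n$ restricted to it is nondegenerate. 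This is where the bulk of the computation lies — verifying that $B\cdot u=uB$, that $(bu)\cdot(du)=\lambda\bar d\cdot b$, etc. — and it is also where one sees that $B$ must be associative for $B\oplus Bu$ to be a composition algebra, so the process stops after at most three doublings.

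Finally I would assemble the dimension count. Iterating the doubling lemma, $C$ is one of $k1$ (char $\ne 2$), $K(\mu)$, $Q(\mu,\beta)=CD(K(\mu),\beta)$, or $C(\mu,\beta,\gamma)=CD(K(\mu),\beta,\gamma)$; associativity of $K(\mu)$ allows the first doubling, associativity (but not commutativity) of $Q(\mu,\beta)$ allows the second, and the failure of associativity of the Cayley algebra forbids a fourth. The remaining points — that quaternion algebras are associative but not commutative, and that Cayley algebras are alternative but not associative — follow from the Cayley–Dickson formulas \eqref{eq:CDdoubling} by direct (if tedious) verification, which I would only sketch. I expect the main obstacle to be the careful characteristic-$2$ bookkeeping in the doubling lemma: one must check that a maximal nondegenerate proper subalgebra is nonzero and that complements $B^\perp$ always contain an anisotropic vector, so that the induction actually gets off the ground and terminates with the stated list rather than, say, with a purely inseparable or totally degenerate obstruction.
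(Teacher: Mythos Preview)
The paper does not actually prove this theorem: it is stated with a reference to \cite[page~32]{ZSSS} and no proof is given. Your sketch is the standard Cayley--Dickson argument one finds in that reference (and in Schafer, Springer--Veldkamp, etc.), so there is nothing to compare; your outline is correct and is essentially the proof the citation points to.
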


\smallskip

For each possible dimension $\geq 2$, there is up to isomorphism a unique Hurwitz algebra with isotropic norm (that is, there is a nonzero element with zero norm). These are the cartesian product $k\times k$ (with norm $n\bigl((\alpha,\beta)\bigr)=\alpha\beta$), the algebra of $2\times 2$ matrices $\Mat_2(k)$, with norm given by the determinant, and the Cayley algebra $C(k)=CD(\Mat_2(k),-1)$. These, together with the ground field $k$ (in characteristic $\ne 2$), are called the \emph{split} Hurwitz algebras.

The split Cayley algebra $C(k)$ contains nonzero idempotents $e_1,e_2=1-e_1$, so that $e_1\cdot e_2=0=e_2\cdot e_1$, and corresponding Peirce decomposition:
\begin{equation}\label{eq:Peirce}
C(k)=ke_1\oplus ke_2\oplus U\oplus V,
\end{equation}
where $U=\{ x\in C: e_1\cdot x=x=x\cdot e_2\}$ and $V=\{ x\in C: e_2\cdot x=x=x\cdot e_1\}$. Moreover, $n(U)=n(V)=0$, $n(e_1,e_2)=1$, $n(e_1)=n(e_2)=0$, $n(ke_1+ke_2,U+V)=0$, and there are dual bases $\{u_1,u_2,u_3\}$ of $U$ and $\{v_1,v_2,v_3\}$ of $V$ relative to $n$, such that the multiplication table of $C(k)$ in this basis is given by Table \ref{ta:splitCayley}.

\begin{table}[h!]
$$ \vbox{\offinterlineskip
\halign{\hfil$#$\enspace\hfil&#\vreglon
 &\hfil\enspace$#$\enspace\hfil
 &\hfil\enspace$#$\enspace\hfil&#\vregleta
 &\hfil\enspace$#$\enspace\hfil
 &\hfil\enspace$#$\enspace\hfil
 &\hfil\enspace$#$\enspace\hfil&#\vregleta
 &\hfil\enspace$#$\enspace\hfil
 &\hfil\enspace$#$\enspace\hfil
 &\hfil\enspace$#$\enspace\hfil&#\vreglon\cr
 &\omit\hfil\vrule width 1pt depth 4pt height 10pt
   &e_1&e_2&\omit&u_1&u_2&u_3&\omit&v_1&v_2&v_3&\omit\cr
 \noalign{\hreglon}
 e_1&&e_1&0&&u_1&u_2&u_3&&0&0&0&\cr
 e_2&&0&e_2&&0&0&0&&v_1&v_2&v_3&\cr
 &\multispan{11}{\hregletafill}\cr
 u_1&&0&u_1&&0&v_3&-v_2&&-e_1&0&0&\cr
 u_2&&0&u_2&&-v_3&0&v_1&&0&-e_1&0&\cr
 u_3&&0&u_3&&v_2&-v_1&0&&0&0&-e_1&\cr
 &\multispan{11}{\hregletafill}\cr
 v_1&&v_1&0&&-e_2&0&0&&0&u_3&-u_2&\cr
 v_2&&v_2&0&&0&-e_2&0&&-u_3&0&u_1&\cr
 v_3&&v_3&0&&0&0&-e_2&&u_2&-u_1&0&\cr
 &\multispan{12}{\hreglonfill}\cr}}
$$
\caption{The split Cayley algebra $C(k)$}
\label{ta:splitCayley}
\end{table}

As in \cite{EP96}, the basis $\calB=\{e_1,e_2,u_1,u_2,u_3,v_1,v_2,v_3\}$ is said to be a \emph{canonical basis} of $C(k)$.

\begin{remark}\label{re:trilinearmap}
Given the split Cayley algebra $C(k)$ and its decomposition \eqref{eq:Peirce}, the trilinear map $U\times U\times U\rightarrow k$, $(x,y,z)\mapsto n(x,yz)$ is alternating and nonzero. Then, given any basis $\{\tilde u_1,\tilde u_2,\tilde u_3\}$ of $U$ with $n(\tilde u_1,\tilde u_2\cdot \tilde u_3)=1$, its dual basis relative to $n$ in $V$ is $\{\tilde v_1=\tilde u_2\cdot\tilde u_3,\tilde v_2=\tilde u_3\cdot\tilde u_1,\tilde v_3=\tilde u_1\cdot\tilde u_2\}$, and $\{e_1,e_2,\tilde u_1,\tilde u_2,\tilde u_3,\tilde v_1,\tilde v_2,\tilde v_3\}$ is another canonical basis of $C(k)$, that is, it has the same multiplication table (Table \ref{ta:splitCayley}).\hfill\qed
\end{remark}

\medskip

\begin{remark}\label{re:nongroupgrading}
The canonical basis $\calB$ of $C(k)$ gives a grading: $C(k)=\oplus_{b\in\calB}C_b$, with $C_b=kb$ for any $b\in\calB$. And this is not a group grading as $C_{e_1}^2=C_{e_1}$ and $C_{e_2}^2=C_{e_2}$, so the hypothetical grading group would contain two different neutral elements.
\end{remark}

\medskip

It is proved in \cite[Lemma 5]{GradingsOctonions} that any grading group of a Cayley algebra is always abelian. All the possible group gradings, up to equivalence, on Cayley algebras have been classified in \cite{GradingsOctonions}. The classification is summarized in the next result, where the universal grading group of each possible grading is used:

\begin{theorem}\label{th:gradingsCayley}
Let $C=\oplus_{g\in G}C_g$ be a nontrivial group grading of a Cayley algebra over a field $k$, where $G$ is the universal grading group. Then either:

\begin{enumerate}

\item $G=\bZ_2$:\newline
 $C\subo$ is a quaternion subalgebra $Q$ of $C$ and $C\subuno$ is its orthogonal complement relative to the norm. (Hence $C=CD(Q,\alpha)$ for some $0\ne\alpha\in k$ and the $\bZ_2=\bZ/2\bZ$-grading is given by the Cayley-Dickson doubling process.)

\smallskip
\item $G=\bZ_2^2$:\newline
 There is a two dimensional composition subalgebra $K$ of $C$ and elements $x,y\in C$ with $n(x)\ne 0\ne n(y)$, $n(K,x)=0$ and $n(K\oplus Kx,y)=0$ such that
    \[
    C_{(\bar 0,\bar 0)}=K,\quad C_{(\bar 1,\bar 0)}=Kx,\quad C_{(\bar 0,\bar 1)}=Ky,\quad C_{(\bar 1,\bar 1)}=K(xy).
    \]
    (Here $C=CD(K,\beta,\gamma)$ with $\beta=-n(x)$ and $\gamma=-n(y)$ and  the grading is given by the iterated Cayley-Dickson doubling process.)

\smallskip
\item $G=\bZ_2^3$ (the characteristic of $k$ being $\ne 2$):\newline
 There are nonisotropic elements $x,y,z\in C$ such that $n(1,x)=0$, $n(k1+kx,y)=0$, and $n(k1+kx+ky+k(xy),z)=0$ such that the grading is determined by the conditions:
    \[
    C_{(\bar 1,\bar 0,\bar 0)}=kx,\quad C_{(\bar 0,\bar 1,\bar 0)}=ky,\quad C_{(\bar 0,\bar 0,\bar 1)}=kz.
    \]
    (Here $C=CD(k,\alpha,\beta,\gamma)$ with $\alpha=-n(x)$, $\beta=-n(y)$ and $\gamma=-n(z)$ and again the grading is given by the iterated Cayley-Dickson doubling process.)

\smallskip
\item $G=\bZ_3$:\newline
 $C$ is the split Cayley algebra $C(k)$ and
    \[
    \null\qquad C\subo=\espan{e_1,e_2},\quad C\subuno=\espan{u_1,u_2,u_3},\quad C_{\bar 2}=\espan{v_1,v_2,v_3},
    \]
    for a canonical basis of $C(k)$.

\smallskip
\item $G=\bZ_4$:\newline
  $C$ is the split Cayley algebra $C(k)$ with a canonical basis such that:
    \[
    \begin{aligned}
    C\subo&=\espan{e_1,e_2},&\quad C\subuno&=\espan{u_1,u_2},\\
    C_{\bar 2}&=\espan{u_3,v_3},& C_{\bar 3}&=\espan{v_1,v_2}.
    \end{aligned}
    \]

\smallskip
\item $G=\bZ$ ($3$-grading):\newline
 $C$ is the split Cayley algebra $C(k)$ with a canonical basis such that:
    \[
    \null\qquad C_0=\espan{e_1,e_2,u_3,v_3},\quad C_1=\espan{u_1,v_2},\quad C_{-1}=\espan{u_2,v_1}.
    \]

\smallskip
\item $G=\bZ$, ($5$-grading):\newline
 $C$ is the split Cayley algebra $C(k)$ with a canonical basis such that:
    \[
    \begin{aligned}
    C_0&=\espan{e_1,e_2},&\quad C_1&=\espan{u_1,u_2},&\quad C_2&=\espan{v_3},\\
    && C_{-1}&=\espan{v_1,v_2},& C_{-2}&=\espan{u_3}.
    \end{aligned}
    \]

\smallskip
\item $G=\bZ^2$:\newline
 $C$ is the split Cayley algebra $C(k)$ with a canonical basis such that:
    \[
    \begin{aligned}
    &&C_{(0,0)}&=\espan{e_1,e_2},&&\\
    C_{(1,0)}&=\espan{u_1},&\ C_{(0,1)}&=\espan{u_2},&\quad C_{(1,1)}&=\espan{v_3},\\
     C_{(-1,0)}&=\espan{v_1},&\ C_{(0,-1)}&=\espan{v_2},&\ C_{(-1,-1)}&=\espan{u_3}.
     \end{aligned}
    \]

\smallskip
\item $G=\bZ\times\bZ_2$:\newline
 $C$ is the split Cayley algebra $C(k)$ with a canonical basis such that:
    \[
    \begin{aligned}
    C_{(0,\bar 0)}&=\espan{e_1,e_2},&\quad C_{(0,\bar 1)}&=\espan{u_3,v_3},\\
     C_{(1,\bar 0)}&=\espan{u_1},&\ C_{(1,\bar 1)}&=\espan{v_2},\\
      C_{(-1,\bar 0)}&=\espan{v_1},&\ C_{(-1,\bar 1)}&=\espan{u_2}.
     \end{aligned}
    \]

\end{enumerate}
\end{theorem}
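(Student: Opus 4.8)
The plan is to control the grading through its neutral homogeneous component; write $e$ for the identity element of $G$. The first step is to observe that $1$ is homogeneous of degree $e$: if $1=\sum_{g}z_g$ with $z_g\in C_g$, then comparing degree-$e$ parts in $1\cdot 1=1$, and comparing homogeneous parts in $z_g=1\cdot z_g$, forces $z_g=0$ for $g\neq e$. Hence the standard conjugation \eqref{eq:conjugation} preserves the grading, and from the Hurwitz identity $n(x,y)1=x\cdot\bar y+y\cdot\bar x$, together with the fact (from \cite{GradingsOctonions}) that the grading group is abelian, one gets $n(C_a,C_b)=0$ whenever $ab\neq e$. Therefore the polar form of $n$ pairs $C_a$ nondegenerately with $C_{a^{-1}}$; in particular $\dim C_a=\dim C_{a^{-1}}$ and $n$ is nondegenerate on $C_e$, so $C_e$ is a unital composition (hence Hurwitz) subalgebra, of dimension $1$, $2$, $4$ or $8$, with dimension $1$ occurring only if $\charac k\neq 2$. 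Finally \eqref{eq:CayleyHamilton} gives $n(x,1)=0$ and $x^{\cdot 2}=-n(x)1\in C_{a^2}$ for every homogeneous $x$ of degree $a\neq e$, while $n(x,y)1=x\bar y+y\bar x\in C_{a^2}$ for $x,y\in C_a$; thus $C_a$ is totally isotropic whenever $a^2\neq e$.

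Next I would run through the four values of $\dim C_e$, using repeatedly that a unital composition subalgebra has dimension $1$, $2$, $4$ or $8$. If $\dim C_e=8$, the grading is trivial. If $\dim C_e=4$, then $C_e=Q$ is a quaternion subalgebra and $Q^{\perp}\cdot Q^{\perp}$ is a nonzero subspace of $Q$; decomposing $Q^{\perp}$ into homogeneous components and comparing dimensions shows that either $Q^{\perp}$ is a single component $C_a$ with $a^2=e$, in which case $C=CD(Q,\alpha)$ and the $\bZ_2$-grading of case (1) results, or $Q^{\perp}=C_a\oplus C_{a^{-1}}$ with both summands totally isotropic planes and $C_a\cdot C_a=0=C_{a^{-1}}\cdot C_{a^{-1}}$; the existence of a totally isotropic plane forces $C$ to be split, the universal grading group computes out to $\bZ$, and a canonical basis adapted to the decomposition puts the grading in the form of case (6). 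If $\dim C_e=1$, then $\charac k\neq 2$ and a dimension count, again excluding composition subalgebras of dimension $3$, $5$, $6$ or $7$, shows that the support is $2$-torsion and every component is one-dimensional, which forces the $\bZ_2^3$-grading of case (3).

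The substantial case is $\dim C_e=2$. Here $C_e$ is either a separable quadratic field extension $K$ of $k$ — so each $C_a$ is a $K$-module, hence even-dimensional, and a short argument produces the $\bZ_2^2$-grading $C=CD(K,\beta,\gamma)$ of case (2) — or the split algebra $ke_1\oplus ke_2$ with orthogonal idempotents $e_1,e_2=1-e_1$, in which case $C$ has isotropic norm and is the split Cayley algebra $C(k)$. In the latter situation each $C_a$ with $a\neq e$ is a module over $ke_1\oplus ke_2$, hence $C_a=(C_a\cap U)\oplus(C_a\cap V)$ for the Peirce spaces $U,V$ of \eqref{eq:Peirce}; thus $U$ and $V$ inherit the grading, are dually paired by $n$, satisfy $U\cdot U\subseteq V$, $V\cdot V\subseteq U$, $U\cdot V+V\cdot U\subseteq ke_1\oplus ke_2$, and the alternating trilinear form of Remark \ref{re:trilinearmap} is homogeneous of degree $e$. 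I would then classify the gradings of the three-dimensional space $U$ compatible with this data, according to whether $U$ lies in a single component, splits as $2+1$, or splits as $1+1+1$; in each subcase the relations among degrees forced by the multiplication determine the universal grading group, and Remark \ref{re:trilinearmap} lets one choose a canonical basis of $C(k)$ in which the grading takes the stated normal form, producing the $\bZ_3$-grading of case (4), the $\bZ_4$-grading of case (5), the $\bZ$ ($5$-grading) of case (7), the $\bZ^2$-grading of case (8) and the $\bZ\times\bZ_2$-grading of case (9).

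I expect the $\dim C_e=2$ split case to be the main obstacle: organizing the subcases for the grading of $U$, discarding those whose universal grading group is a proper quotient of one already obtained (the various cyclic coarsenings of the $\bZ$- and $\bZ^2$-gradings), and exhibiting in each case an adapted canonical basis. Once the compatibility of the conjugation and of the norm with the grading is in hand, the remaining work is bookkeeping.
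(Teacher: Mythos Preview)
The paper does not prove this theorem. It is stated as a summary of the classification obtained in \cite{GradingsOctonions}; the sentence immediately preceding the theorem reads ``All the possible group gradings, up to equivalence, on Cayley algebras have been classified in \cite{GradingsOctonions}. The classification is summarized in the next result\ldots'' So there is no proof in this paper to compare your attempt against.

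That said, your outline is the natural one and almost certainly parallels the argument in \cite{GradingsOctonions}: establish $1\in C_e$, show the norm pairs $C_a$ with $C_{a^{-1}}$, deduce that $C_e$ is a Hurwitz subalgebra of dimension $1$, $2$, $4$ or $8$, and then branch on $\dim C_e$. A couple of places deserve a bit more care. In the $\dim C_e=4$ case you jump from ``$Q^\perp$ is either a single $C_a$ or a sum $C_a\oplus C_{a^{-1}}$ of isotropic planes'' without explicitly excluding a decomposition into four one-dimensional isotropic lines; that case is ruled out because $C_e\oplus C_a\oplus C_{a^{-1}}$ would then be a six-dimensional composition subalgebra, but you should say so. Likewise, in the $\dim C_e=1$ case, ``a dimension count shows the support is $2$-torsion'' hides the actual argument: if some $a$ in the support had $a^2\ne e$, then $C_a$ would be totally isotropic and $C_e\oplus C_a\oplus C_{a^{-1}}$ would be a composition subalgebra of odd dimension $1+2\dim C_a$, which is impossible. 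Your own assessment that the split $\dim C_e=2$ case carries the bulk of the work is accurate; the bookkeeping there (classifying the induced grading on the three-dimensional Peirce space $U$ subject to the constraint that the alternating trilinear form of Remark~\ref{re:trilinearmap} be homogeneous of degree $e$) is exactly where the nine cases separate.
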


\medskip

\begin{remark}\label{re:coarseningCayley}
If the characteristic of the ground field $k$ is $\ne 2$, all the gradings of a Cayley algebra $C$ are coarsenings of either a $\bZ_2^3$-grading or a $\bZ^2$-grading.\hfill\qed
\end{remark}

\bigskip

The arguments used in \cite{GradingsOctonions} give too the possible gradings on Hurwitz algebras of dimension $4$. Alternatively, given a graded quaternion algebra $Q=\oplus_{g\in G}Q_g$, then the Cayley algebra $C=CD(Q,1)=Q\oplus Qu$  ($u^2=1$) is $G\times \bZ_2$-graded with $C_{(g,\bar 0)}=Q_g$ and $C_{(g,\bar 1)}=Q_gu$. This allows to compute easily all the possibilities:

\begin{corollary}\label{co:gradingsquaternion}
Let $Q=\oplus_{g\in G}Q_g$ be a nontrivial group grading of a quaternion algebra over a field $k$, where $G$ is the universal grading group. Then either:
\begin{enumerate}
\item $G=\bZ_2$:\newline
 $Q\subo$ is a composition two dimensional subalgebra $K$ of $Q$ and $Q\subuno$ is its orthogonal complement relative to the norm. (Hence $Q=CD(K,\alpha)$ for some $0\ne\alpha\in k$ and the $\bZ_2$-grading is given by the Cayley-Dickson doubling process.)

\item $G=\bZ_2^2$ (the characteristic of $k$ being $\ne 2$):\newline
 There are nonisotropic elements $x,y\in Q$ such that $n(1,x)=0$ and $n(k1+kx,y)=0$ such that the grading is determined by the conditions:
    \[
    Q_{(\bar 1,\bar 0)}=kx,\quad Q_{(\bar 0,\bar 1)}=ky.
    \]
    (Here $Q=CD(k,\alpha,\beta)$ with $\alpha=-n(x)$, $\beta=-n(y)$, and again the grading is given by the iterated Cayley-Dickson doubling process.)

\item $G=\bZ$ ($3$-grading):\newline
 $Q$ is, up to isomorphism, the split quaternion algebra $\Mat_2(k)$ and:
    \[
    \null\qquad Q_0=\espan{\left(\begin{smallmatrix} 1&0\\ 0&0\end{smallmatrix}\right), \left(\begin{smallmatrix} 0&0\\ 0&1\end{smallmatrix}\right)},\quad
    Q_1=\espan{\left(\begin{smallmatrix} 0&1\\ 0&0\end{smallmatrix}\right)},\quad
    Q_{-1}=\espan{\left(\begin{smallmatrix} 0&0\\ 1&0\end{smallmatrix}\right)}.
    \]

\end{enumerate}
\end{corollary}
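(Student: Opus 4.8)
The plan is to deduce Corollary~\ref{co:gradingsquaternion} directly from Theorem~\ref{th:gradingsCayley} via the doubling construction $C=CD(Q,1)=Q\oplus Qu$ described just before the statement, together with the fact (cited from \cite{GradingsOctonions}) that grading groups of Cayley algebras are abelian; the same argument applied to the multiplication of $Q$ shows that grading groups of quaternion algebras are abelian, so there is no loss in taking $G$ to be the universal grading group throughout. First I would show that a group grading $Q=\oplus_{g\in G}Q_g$ extends to a $G\times\bZ_2$-grading on $C=CD(Q,1)$ by setting $C_{(g,\bar 0)}=Q_g$ and $C_{(g,\bar 1)}=Q_gu$: this is immediate from \eqref{eq:CDdoubling} since $\bar d\in Q_g$ whenever $d\in Q_g$ (conjugation preserves each homogeneous component, being $x\mapsto n(x,1)1-x$ with $1\in Q_e$), so $Q_g\cdot(Q_hu)\subseteq Q_{gh}u$, $(Q_gu)\cdot Q_h\subseteq Q_{gh}u$, and $(Q_gu)\cdot(Q_hu)\subseteq Q_{gh}$. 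Hence any grading of $Q$ is the restriction to the subalgebra $Q=Q1\subseteq C$ of a grading of a Cayley algebra, and the universal grading group of the extended grading is a quotient of $\hat G\times\bZ_2$.

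Next I would run through the list in Theorem~\ref{th:gradingsCayley}, identify those Cayley gradings $C=\oplus C_g$ that admit a quaternion subalgebra $Q$ which is a graded subspace with $C=Q\oplus Qu$ for some homogeneous $u$ with $u^{\cdot2}=1$, and read off the induced grading on $Q$. The $\bZ_2^2$-grading of $C$ in case (2) with $K$ two-dimensional is already of this shape: $Q=C_{(\bar0,\bar0)}\oplus C_{(\bar1,\bar0)}=CD(K,\beta)$ is graded over $\bZ_2$, giving Corollary case (1); the $\bZ_2^3$-grading in case (3) restricts on $Q=CD(k,\alpha,\beta)=k1\oplus kx\oplus ky\oplus k(xy)$ to the $\bZ_2^2$-grading of Corollary case (2); and the $\bZ$-grading of $C(k)$ in case (6) restricts to the $3$-grading of $\Mat_2(k)$ in Corollary case (3) (with $e_1,e_2$ the two idempotents, $u_3,v_3$ the off-diagonal entries). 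Conversely — and this is the real content — every group grading on $Q$ must appear this way, so I would argue that the extended $G\times\bZ_2$-grading on $C$ must be equivalent to a coarsening of one of the listed Cayley gradings, and then trace which quaternion subalgebra it cuts out; by Remark~\ref{re:coarseningCayley}, in characteristic $\ne2$ it suffices to examine coarsenings of the $\bZ_2^3$-grading and the $\bZ^2$-grading, and a short dimension/subalgebra count shows the only $4$-dimensional graded unital subalgebras arising are the three already found.

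The main obstacle I anticipate is the bookkeeping in the converse direction: verifying that no \emph{other} grading of $Q$ exists — in particular ruling out a would-be $\bZ\times\bZ_2$ or $\bZ_4$ grading on the split quaternion algebra. The point is that the extended grading on $C=CD(Q,1)$ has universal group a quotient of $\hat G\times\bZ_2$ with the $\bZ_2$ factor acting freely (swapping $Q1$ and $Qu$), which forces $\hat G$ to have no element of order $4$ coming ``for free'' and bounds $\lvert\hat G\rvert\le 4$; the $3$-grading by $\bZ$ survives because $\mathrm{rank}(\hat G)$ can be $1$. One must also handle the characteristic $2$ case separately, where the two-dimensional subalgebra $K$ in case (1) need not be split off by a form-orthogonal complement in the naive way, and where case (2) of the Corollary is excluded exactly as case (3) of Theorem~\ref{th:gradingsCayley} is excluded; here I would invoke the explicit arguments of \cite{GradingsOctonions} rather than reprove them. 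Apart from this, the proof is a direct, finite inspection, which is why the statement is phrased as a corollary.
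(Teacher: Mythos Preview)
Your proposal is correct and follows precisely the approach the paper itself indicates in the paragraph immediately preceding the corollary: extend a grading of $Q$ to a $G\times\bZ_2$-grading of $C=CD(Q,1)$ and then read off the possibilities from Theorem~\ref{th:gradingsCayley}. The paper offers no further detail beyond ``This allows to compute easily all the possibilities,'' so your bookkeeping on the converse direction is more explicit than what appears there, but the strategy is identical.
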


\medskip

As for the two dimensional Hurwitz algebras, any such algebra $K$ can only be nontrivially graded if the characteristic of the ground field $k$ is not $2$, and then $K=CD(k,\alpha)=k1\oplus ku$, with $u^2=\alpha$, and this provides a $\bZ_2$-grading. This is the only nontrivial possibility.

\bigskip

\section{Symmetric composition algebras}\label{se:SymCompo}

A new class of composition algebras has been considered lately by a number of authors  (\cite{Pet69, Okubo78,Okubo95,OkuboMyung80,OkuboOsborn1,OkuboOsborn2,EM91,EM93}).

A composition algebra $(S,*,n)$ is said to be \emph{symmetric} if the polar form of its norm is associative:
\begin{equation}\label{eq:normassociative}
n(x*y,z)=n(x,y*z),
\end{equation}
for any $x,y,z\in S$. This condition is equivalent (see \cite[Lemma 2.3]{OkuboOsborn2} or \cite[(34.1)]{KMRT}) to the condition:
\begin{equation}\label{eq:xyx}
(x*y)*x=n(x)y=x*(y*x),
\end{equation}
for any $x,y\in S$.

\smallskip

The first examples of symmetric composition algebras are given by the so called \emph{para-Hurwitz algebras} \cite{OkuboMyung80}. Given a Hurwitz algebra $(C,\cdot,n)$, its para-Hurwitz counterpart is the composition algebra $(C,\bullet,n)$, with
\[
x\bullet y=\bar x\cdot \bar y,
\]
for any $x,y\in C$, where $x\mapsto \bar x$ is the standard conjugation in the Hurwitz algebra $C$. This algebra will be denoted by $\bar C$ for short. Note that the unity of $(C,\cdot,n)$ becomes a \emph{para-unit} in $\bar C$, that is, an element $e$ such that $e\bullet x=x\bullet e=n(e,x)e-x$ for any $x$. If the dimension is at least $4$, the para-unit is unique, and it is the unique idempotent that spans the commutative center of the para-Hurwitz algebra.

\smallskip

A slight modification of the above procedure was considered previously by Petersson (\cite{Pet69}) as follows:

Let $\tau$ be an automorphism of a Hurwitz algebra $(C,\cdot,n)$ with $\tau^3=1$, and consider the new multiplication defined on $C$ by means of:
\begin{equation}\label{eq:Petersson}
x*y=\tau(\bar x)\cdot\tau^2(\bar y),
\end{equation}
for any $x,y\in C$. Then the algebra $(C,*,n)$ is a symmetric composition algebra (a \emph{Petersson algebra}), which will be denoted by $\bar C_\tau$ for short.

Consider a canonical basis $\{e_1,e_2,u_1,u_2,u_3,v_1,v_2,v_3\}$ of the split Cayley algebra $C(k)$ as in Table \ref{ta:splitCayley}. Then the linear map $\tau_{st}:C(k)\rightarrow C(k)$ determined by the conditions:
\begin{equation}\label{eq:taust}
\tau_{st}(e_i)=e_i,\ i=1,2;\quad\tau_{st}(u_i)=u_{i+1},\ \tau_{st}(v_i)=v_{i+1}\ \textrm{(indices modulo $3$)},
\end{equation}
is clearly an order $3$ automorphism of $C(k)$. (Here ``$st$'' stands for \emph{standard}.) The associated Petersson algebra $P_8(k)=\overline{C(k)}_{\tau_{st}}$ is called the \emph{pseudo-octonion algebra} over the field $k$ (see \cite[p.~1095]{EP96}). This definition extends and unifies previous definitions by Okubo \cite{Okubo78} and Okubo and Osborn \cite{OkuboOsborn2}.

The forms of $P_8(k)$ are called \emph{Okubo algebras} (see \cite{EM90}).

\smallskip

The classification of the symmetric composition algebras was obtained in \cite{EM93} (see also \cite[Theorem 1]{EldTwisted} and \cite[(34.37)]{KMRT}) over fields of characteristic $\ne 3$, and in \cite{Eld97} in characteristic $3$. It turns out that, apart from some forms of the two dimensional para-Hurwitz algebra, any symmetric composition algebra is either a para-Hurwitz or an Okubo algebra.

\smallskip

Even though the classification follows different paths according to the characteristic being $3$ or different from $3$, the following unifying result was obtained in \cite[Theorem 7]{EldTwisted}:

\begin{theorem}\label{th:unified}
For any Okubo algebra $(S,*,n)$ with isotropic norm over a field $k$, there are nonzero scalars $\alpha,\beta\in k$ and a basis $\{x_{ij}: -1\leq i,j\leq 1,\ (i,j)\ne (0,0)\}$ such that the multiplication table is given by Table \ref{ta:Oalphabeta}.
\end{theorem}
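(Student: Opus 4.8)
The plan is to deduce the statement from the classification of Okubo algebras and then, for each algebra in that classification with isotropic norm, to exhibit the grading basis $\{x_{ij}\}$ explicitly and read off its multiplication table. By \cite{EM93} (characteristic $\ne 3$) and \cite{Eld97} (characteristic $3$), every Okubo algebra is a form of the pseudo-octonion algebra $P_8(k)$, and the ones whose norm is isotropic form a family that, as part of the classification, comes already described by concrete data: a $\bZ_3^2$-grading with one-dimensional homogeneous components and trivial identity component, together with a pair of nonzero scalars. So the scalars $\alpha,\beta$ in the statement are, up to the normalization of the basis, exactly the invariants labelling this family, and the substance of the proof is to check that recording this data as a multiplication table produces Table~\ref{ta:Oalphabeta}, uniformly over all ground fields.

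The construction is transparent in the generic situation, $\charac k\neq 3$ with a primitive cube root of unity $\omega\in k$. Here one realizes $P_8(k)$ on the trace-zero $3\times 3$ matrices with the Okubo product $a*b=\mu\,ab+(1-\mu)\,ba-\tfrac13\trace(ab)1$, where $3\mu^{2}-3\mu+1=0$ (so that $\mu\in k$ precisely because $\omega\in k$), and takes the clock-and-shift generators $x=\diag(1,\omega,\omega^{2})$ and the cyclic permutation matrix $y$, for which $x^{3}=y^{3}=1$ and $xy=\omega yx$. Letting $x_{ij}$ be a fixed scalar multiple of $x^{i}y^{j}$ for $(i,j)\in\bZ_3^2$ with $(i,j)\neq(0,0)$, a direct computation shows that $x_{ij}*x_{kl}$ lies in $k\,x_{i+k,\,j+l}$, with structure constants given by the values of a bicharacter of $\bZ_3^2$ (and with $x_{ij}*x_{-i,-j}=0$, the identity component of the grading being trivial); after the chosen scaling this is Table~\ref{ta:Oalphabeta} with $\alpha=\beta=1$. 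For the remaining members of the family one passes to $k(\omega)$ --- or uses the twist data already supplied by the classification --- where the above applies, and observes that the twist amounts to nothing more than rescaling the $x_{ij}$ over $k$; this inserts two independent scalars $\alpha,\beta\in k^{\times}$ at the places where an index ``wraps around'' modulo $3$, in complete analogy with the way the parameter $\lambda$ enters the Cayley--Dickson doubling \eqref{eq:CDdoubling}.

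The hard part will be characteristic $3$. The matrix model above collapses there --- $\omega=1$, the space of trace-zero $3\times 3$ matrices contains $1$ and $\frsl_3(k)$ has a one-dimensional center, and the $\bZ_3^2$-symmetry can no longer be diagonalized in the scheme-theoretic sense --- so both the algebra and its grading must be taken from the separate construction of \cite{Eld97}. The real point, and precisely what \cite[Theorem~7]{EldTwisted} establishes, is to choose the grading basis in characteristic $3$ so that the identical structure constants, with the identical parameters $\alpha,\beta$, reproduce Table~\ref{ta:Oalphabeta}; this calls for a characteristic-free model of the Okubo algebra --- a ``twisted polynomial'' type presentation rather than the $3\times 3$-matrix one --- on which the computation of the table, and the verification that every isotropic Okubo algebra in every characteristic is obtained for some $(\alpha,\beta)\in k^{\times}\times k^{\times}$, become straightforward.
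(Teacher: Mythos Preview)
The paper does not prove this theorem at all: it is stated as a quotation of \cite[Theorem~7]{EldTwisted}, with no argument given. Your proposal is therefore not being compared against an existing proof but against a citation, and in the end your proposal defers to exactly the same citation for the substantive cases. What you have written is an outline of why the theorem is plausible and where the difficulty lies, not a proof: the clock-and-shift computation you sketch for $\charac k\ne 3$ with $\omega\in k$ is correct and standard, but the passage ``one passes to $k(\omega)$ \ldots\ and observes that the twist amounts to nothing more than rescaling the $x_{ij}$ over $k$'' hides the entire descent argument, and for characteristic $3$ you explicitly hand the problem back to \cite{EldTwisted}. There is also a circularity risk in your first paragraph: you invoke the classifications in \cite{EM93,Eld97} as already supplying ``a $\bZ_3^2$-grading with one-dimensional homogeneous components \ldots\ together with a pair of nonzero scalars'', but packaging the classification in exactly that uniform $\bZ_3^2$-graded form is precisely the content of the theorem you are trying to prove.

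If you want an argument that is internal to this paper rather than a pointer to \cite{EldTwisted}, note that the results proved \emph{after} Theorem~\ref{th:unified} nearly suffice. Given an Okubo algebra with isotropic norm, pick $x$ with $n(x)=0\ne n(x,x*x)$; Proposition~\ref{pr:interesting} then produces $y$ with $n(y)=0\ne n(y,y*y)$, $n(\alg{x},\alg{y})=0$ and $x*y=0$, and Corollary~\ref{co:bonito} yields the isomorphism with $\calO_{\alpha,\beta}$. This works cleanly for $\charac k\ne 3$, and (via Remark~\ref{re:interesting}) in characteristic $3$ under a mild extra hypothesis; the residual characteristic-$3$ case is again what forces one back to the twisted-polynomial model of \cite{EldTwisted}.
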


\begin{table}[h!]
{\tiny
$$
\vcenter{\vbox{\offinterlineskip \halign{\hfil#\hfil\kern 1pt
&#\vreglon& \kern 1pt\hfil#\hfil&
\hfil#\hfil\kern 1pt&
#\vregleta&
\kern 1pt\hfil#\hfil&
\hfil#\hfil\kern 1pt&
#\vregleta&
\kern 1pt\hfil#\hfil\kern -2pt&
\kern -2pt\hfil#\hfil\kern 1pt&
#\vregleta&
\kern 1pt\hfil#\hfil\kern -2pt&
\kern -2pt\hfil#\hfil\kern 1pt&#\vreglon\cr
&&
    $x_{1,0}$&$x_{-1,0}$&\omit&$x_{0,1}$&$x_{0,-1}$&\omit
    &$x_{1,1}$&$x_{-1,-1}$&\omit&$x_{-1,1}$&$x_{1,-1}$&\omit\cr
\noalign{\hreglon}\cr
\bigstrut
   $x_{1,0}$&&$-\alpha x_{-1,0}$&$0$&
   &$0$&$x_{1,-1}$&
   &$0$&$x_{0,-1}$&
			&$0$&$\alpha x_{-1,-1}$&\cr
\bigstrut
   $x_{-1,0}$&&$0$&$-\alpha^{-1}x_{1,0}$&
   &$x_{-1,1}$&$0$&
   &$x_{0,1}$&$0$&
			&$\alpha^{-1}x_{1,1}$&$0$&\cr
&\multispan{13}\hregletafill\cr
\bigstrut
   $x_{0,1}$&&$x_{1,1}$&$0$&
   &$-\beta x_{0,-1}$&$0$&
   &$\beta x_{1,-1}$&$0$&
			&$0$&$x_{1,0}$&\cr
\bigstrut
   $x_{0,-1}$&&$0$&$x_{-1,-1}$&
   &$0$&$-\beta^{-1} x_{0,1}$&
   &$0$&$\beta^{-1}x_{-1,1}$&
			&$x_{-1,0}$&$0$&\cr
&\multispan{13}\hregletafill\cr
\bigstrut
   $x_{1,1}$&&$\alpha x_{-1,1}$&$0$&
   &$0$&$x_{1,0}$&
   &$-(\alpha\beta)x_{-1,-1}$&$0$&
			&$\beta x_{0,-1}$&$0$&\cr
\bigstrut
   $x_{-1,-1}$&&$0$&$\alpha^{-1}x_{1,-1}$&
   &$x_{-1,0}$&$0$&
   &$0$&$-(\alpha\beta)^{-1}x_{1,1}$&
			&$0$&$\beta^{-1}x_{0,1}$&\cr
&\multispan{13}\hregletafill\cr
\bigstrut
   $x_{-1,1}$&&$x_{0,1}$&$0$&
   &$\beta x_{-1,-1}$&$0$&
   &$0$&$\alpha^{-1}x_{1,0}$&
			&$-\alpha^{-1}\beta x_{1,-1}$&$0$&\cr
\bigstrut
   $x_{1,-1}$&&$0$&$x_{0,-1}$&
   &$0$&$\beta^{-1}x_{1,1}$&
   &$\alpha x_{-1,0}$&$0$&
			&$0$&$-\alpha\beta^{-1}x_{-1,1}$&\cr
&\multispan{13}\hreglonfill\cr}}}
$$}
\caption{$\calO_{\alpha,\beta}$}
\label{ta:Oalphabeta}
\end{table}

\smallskip

The Okubo algebra with the multiplication table given in Table \ref{ta:Oalphabeta} will be denoted by $\calO_{\alpha,\beta}$. It must be remarked here that over fields of characteristic $3$, the norm of any Okubo algebra is isotropic (see \cite[Lemma 3.7]{EP96} and \cite[Corollary 3.4]{Eld97}). The same happens over fields of characteristic $\ne 3$ containing the cubic roots of $1$ (\cite[Corollary 3.6]{EP96}).

\begin{remark}\label{re:Z3naturalgrading}
The Okubo algebra $\calO_{\alpha,\beta}$ is naturally $\bZ_3^2$ graded, with
\begin{equation}\label{eq:Z32standard}
(\calO_{\alpha,\beta})_{(\bar 1,\bar 0)}=k x_{1,0},\ \text{and}\  (\calO_{\alpha,\beta})_{(\bar 0,\bar 1)}=k x_{0,1}.
\end{equation}

This grading will be referred to as the \emph{standard $\bZ_3^2$-grading} of $\calO_{\alpha,\beta}$, and will play an important role later on.

\smallskip

Also, by coarsening this grading, there appears the $\bZ_3$-grading where for $i=0,1,2$,
\begin{equation}\label{eq:Z3standard}
(\calO_{\alpha,\beta})_{\bar \imath}=\oplus_{j=0}^2(\calO_{\alpha,\beta})_{(\bar \jmath,\bar \imath)}.
\end{equation}
This will be called the \emph{standard $\bZ_3$-grading} of $\calO_{\alpha,\beta}$. \hfill\qed
\end{remark}

\medskip

In the determination of the gradings of the symmetric composition algebras, it will be important to be able to recognize the Okubo algebras $\calO_{\alpha,\beta}$. The following results, which have their own independent interest, are aimed at this objective.

Given an element $x$ of an algebra, $\alg{x}$ will denote the subalgebra generated by $x$.

\begin{proposition}\label{pr:interesting}
Let $(S,*,n)$ be an Okubo algebra over a field $k$ of characteristic $\ne 3$ containing a nonzero element $x\in S$ such that $n(x)=0\ne n(x,x*x)$. Then there is a nonzero element $y\in S$, with $n(y)=0\ne n(y,y*y)$, $x*y=0$ and $n(\alg{x},\alg{y})=0$.
\end{proposition}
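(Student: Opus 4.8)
The plan is to reduce the statement to a concrete computation inside a model of the Okubo algebra. Since $x$ is isotropic with $n(x,x*x)\neq 0$, the norm of $S$ is isotropic, so by Theorem~\ref{th:unified} we may identify $(S,*,n)$ with $\calO_{\alpha,\beta}$ for suitable nonzero $\alpha,\beta\in k$, equipped with the basis $\{x_{ij}\}$ of Table~\ref{ta:Oalphabeta}. First I would record the relevant values of the norm on this basis: from \eqref{eq:xyx} one has $(z*w)*z=n(z)w$, so reading off the table gives $n(x_{ij})=0$ for all basis elements, while the polar form pairs $x_{ij}$ with $x_{-i,-j}$ (up to a scalar) and annihilates all other pairs. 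In particular $n(x_{1,0},x_{1,0}*x_{1,0})=n(x_{1,0},-\alpha x_{-1,0})\neq 0$, confirming that $x_{1,0}$ is an element of the type hypothesized, and similarly each $x_{ij}$ with $(i,j)\neq(0,0)$ has this property.

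The key structural observation is that the subalgebra $\alg{x_{1,0}}$ is spanned by $x_{1,0}$ and $x_{1,0}*x_{1,0}=-\alpha x_{-1,0}$, i.e. $\alg{x_{1,0}}=kx_{1,0}\oplus kx_{-1,0}$ (a two–dimensional para-Hurwitz subalgebra, the "$(\ast,0)$-row" of the $\bZ_3^2$-grading). From the table, $x_{1,0}*x_{0,1}=x_{1,1}\neq 0$ but $x_{1,0}*x_{-1,0}=0$ and $x_{-1,0}*x_{-1,0}=-\alpha^{-1}x_{1,0}$, so $\alg{x_{-1,0}}=\alg{x_{1,0}}$. The candidate for $y$ is then an element whose generated subalgebra is "orthogonal" to this row and which is killed by $x_{1,0}$ on the appropriate side; scanning the $x_{1,0}$-column of Table~\ref{ta:Oalphabeta} shows that $x_{1,0}*x_{-1,0}=0$, $x_{1,0}*x_{1,0}$ lands back in the row, and the only basis vectors $z$ with $x_{1,0}*z=0$ are $z\in\{x_{-1,0},x_{0,-1},x_{-1,-1},x_{1,-1}\}$. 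Among these, $y=x_{0,-1}$ works: $n(x_{0,-1})=0$, $x_{0,-1}*x_{0,-1}=-\beta^{-1}x_{0,1}\neq 0$ so $n(x_{0,-1},x_{0,-1}*x_{0,-1})\neq 0$, $x_{1,0}*x_{0,-1}=0$, and $\alg{x_{0,-1}}=kx_{0,-1}\oplus kx_{0,1}$, which is paired by the polar form only with the "$(0,\ast)$-row" and hence satisfies $n(\alg{x_{1,0}},\alg{x_{0,-1}})=n(kx_{1,0}\oplus kx_{-1,0},\,kx_{0,-1}\oplus kx_{0,1})=0$.

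Finally I would remove the dependence on the particular normalization: the element $x$ in the statement need not be the basis vector $x_{1,0}$. To handle this, I would invoke that the automorphism group of $\calO_{\alpha,\beta}$ acts transitively on the set of elements $z$ with $n(z)=0\neq n(z,z*z)$ lying in a single grading component — or, more robustly, argue intrinsically: set $x'=x*x$, note $n(x')=n(x)^2=0$ and $\alg{x}=kx\oplus kx'$ is a two-dimensional subalgebra on which the polar form is the hyperbolic plane (using \eqref{eq:xyx} and $n(x,x')\neq 0$), then work inside the orthogonal complement $\alg{x}^{\perp}$, which is $4$-dimensional and a nondegenerate composition subspace stable under left and right multiplication by $x$; within it one solves $x*y=0$ with $n(y)=0\neq n(y,y*y)$ by a dimension count on the map $L_x$ restricted to $\alg{x}^{\perp}$, whose kernel one checks to be isotropic of dimension two and to contain an element with the required non-vanishing cubic condition. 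The main obstacle is precisely this last step: verifying intrinsically (rather than by staring at the table) that the left multiplication $L_x$ has a two-dimensional kernel inside $\alg{x}^\perp$ and that this kernel genuinely contains an element $y$ with $n(y,y*y)\neq 0$ and with $n(\alg{x},\alg{y})=0$ — equivalently, ruling out the degenerate situation where every $y$ with $x*y=0$ satisfies $y*y\in\alg{x}$ or $n(y,y*y)=0$. I expect this to be settled by combining the identities $n(x*y,z)=n(x,y*z)$, $(x*y)*x=n(x)y$, and the flexibility-type relation $x*(x*y)+$ (cyclic) $=$ (something proportional to $n(x)y$ and $n(x,y)x$), which together pin down the $*$-action of $x$ on its orthogonal complement tightly enough to force the conclusion, and which in the split case is exactly what Table~\ref{ta:Oalphabeta} exhibits.
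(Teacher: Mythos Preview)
Your table computation for the specific pair $(x_{1,0},x_{0,-1})$ is fine, but the proposition is about an \emph{arbitrary} $x$ with $n(x)=0\ne n(x,x*x)$, and neither of your two bridges to the general case works.

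The transitivity route is unsubstantiated and in fact cannot work as stated: automorphisms of $\calO_{\alpha,\beta}$ preserve the scalar $n(z,z*z)$, and scaling $z\mapsto\lambda z$ changes it by $\lambda^3$, so at best you get one orbit per cube class in $k^\times$. Over a field where not every element is a cube there is no reason your $x$ can be moved to $x_{1,0}$, and you give no argument even in the split case.

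The intrinsic sketch has the right spirit but contains errors and stops exactly where the content is. The orthogonal complement $\alg{x}^\perp$ is $6$-dimensional, not $4$; the kernel $U=\{z\in\alg{x}^\perp:x*z=0\}$ is $3$-dimensional, not $2$ (in your model it is $\espan{x_{0,-1},x_{-1,-1},x_{1,-1}}$). More importantly, the heart of the matter is showing that $U$ contains an element $y$ with $n(y,y*y)\ne 0$, and this is precisely what you leave as ``I expect this to be settled by\dots''. It is not automatic: the cubic form $y\mapsto n(y,y*y)$ could a priori vanish identically on a $3$-dimensional isotropic subspace. The paper handles this by building a Cayley algebra structure on $S$ (via $p=x+\alpha^{-1}x*x$ when $\alpha=n(x,x*x)\notin k^3$, or via the idempotent $e=x+x*x$ when $\alpha\in k^3$), identifying $U$ with a Peirce space, and then producing an explicit basis $\{u,\tau(u),\tau^2(u)\}$ (or $\{u,u*p,(u*p)*p\}$) of $U$ whose alternating trilinear product is nonzero---which forces $n(u,u*u)\ne 0$. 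The case split and the minimal-polynomial argument (using that $(S,*,n)$ is Okubo, not para-Hurwitz, to rule out $\tau\vert_U$ being scalar) are exactly what your sketch is missing.
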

\begin{proof}
Since $(x*x)*x=n(x)x=0=x*(x*x)$ because of \eqref{eq:xyx}, and
\[
(x*x)*(x*x)=-((x*x)*x)*x+n(x,x*x)x=n(x,x*x)x,
\]
the subalgebra generated by the element $x$ is $\alg{x}=kx+k(x*x)$, which is a composition subalgebra of $S$ (that is, the restriction of the norm is nondegenerate). Let $\alpha=n(x,x*x)$.

Assume first that $\alpha\not\in k^3$, and consider the element $p=x+\alpha^{-1}x*x$, whose norm is $n(p)=1$. Let $l_p$ and $r_p$ denote the left and right multiplications by $p$, which are isometries of $(S,n)$, and consider the Cayley algebra $(S,\cdot,n)$ with multiplication given by the equation:
\[
a\cdot b=l_p(a)* r_p(b),
\]
for any $a,b\in S$, whose unity is the element
\[
q=p*p=x*x+\alpha^{-2}(x*x)*(x*x)=\alpha^{-1}x+x*x.
\]
(Note that for any $b$, $q\cdot b=(p*q)*(b*p)=(p*(p*p))*(b*p)=n(p)p*(b*p)=n(p)^2b=b=b\cdot q$.)

It follows that
\[
\begin{split}
x\cdot x&=(p*x)*(x*p)=(x*x)*(x*x)=\alpha x,\\
(x*x)\cdot(x*x)&=(p*(x*x))*((x*x)*p)=x*x,\\
x\cdot(x*x)&=(p*x)*((x*x)*p)=(x*x)*x=0=(x*x)\cdot x,
\end{split}
\]
so that the elements $e_1=\alpha^{-1}x$ and $e_2=x*x$ are idempotents of the Cayley algebra  $(S,\cdot ,n)$ whose sum is the unity $q$. Consider the associated Peirce decomposition:
\[
S=ke_1\oplus ke_2\oplus U\oplus V,
\]
where $U=\{z\in S: e_1\cdot z=z=z\cdot e_2\}$ and $V=\{z\in S: e_2\cdot z=z=z\cdot e_1\}$.

Note that for any $z\in \alg{x}^\perp=U\oplus V$, we get:
\begin{equation}\label{eq:ze10}
\begin{split}
z\cdot e_1=0&\Leftrightarrow (p*z)*(e_1*p)=0\\
 &\Leftrightarrow ((x+\alpha^{-1}x*x)*z)*(\alpha^{-1}x*x)=0\\
 &\Leftrightarrow (x*z)*(\alpha^{-1}x*x)=0\quad\text{(as $n(x*x)=0$)}\\
 &\Leftrightarrow (x*z)*p=0\quad\text{(as $(x*z)*x=0$ since $n(x)=0$)}\\
 &\Leftrightarrow x*z=0\quad\text{(since $n(p)=1$, so $r_p$ is an isometry).}
\end{split}
\end{equation}
Since $U=\{z\in \alg{x}^\perp: z\cdot e_1=0\}$, we obtain
\[
U=\{z\in\alg{x}^\perp: x*z=0\}.
\]
Also, for any $z\in U$, using \eqref{eq:xyx}, we get
\[
(z*p)\cdot e_1=z*(e_1*p)=\alpha^{-1}z*(x*x)=-\alpha^{-1}x*(x*z)=0,
\]
so $r_p(U)\subseteq U$. Now, for any $0\ne u\in U$:
\[
r_p^3(u)=((u*p)*p)*p=-(p*p)*(u*p)=-p\cdot u=-(\alpha e_1+\alpha^{-1}e_2)\cdot u=-\alpha u,
\]
so, since we are assuming $-\alpha\not\in k^3$ and hence the polynomial $X^3+\alpha$ is irreducible, the minimal polynomial of $u$ relative to the endomorphism $r_p\vert_{U}$ is $X^3+\alpha$, and $\{u,u*p,(u*p)*p\}$ is a basis of $U$. But the alternating trilinear map $U\times U\times U\rightarrow k$: $(a,b,c)=n(a,b\cdot c)$, is nonzero (see Remark \ref{re:trilinearmap}), so we obtain:
\[
\begin{split}
0&\ne n(u,(u*p)\cdot ((u*p)*p))\\
&=n\bigl(u,(p*(u*p)*(((u*p)*p)*p)\bigr)\\
&=n(u,u*(-\alpha u))=-\alpha n(u,u*u),
\end{split}
\]
and $u*u\in U\cdot U\subseteq V$. Besides, $x*u=0$, because of \eqref{eq:ze10}, as $u\in U$. Hence it is enough to take the element $y=u$, as $\alg{y}=\espan{u,u*u}\subseteq U+V\subseteq \alg{x}^\perp$ (the orthogonal to $\alg{x}$ relative to the norm $n$).

\smallskip

Finally, assume that $\alpha\in k^3$ and take $\beta\in k$ with $\alpha=\beta^3$. Then, changing $x$ to $\beta^{-1}x$, we may assume that $\alpha=1$, that is $n(x,x*x)=1$. Then $e=x+x*x$ is a nonzero idempotent of $(S,*,n)$ and the linear map $\tau:S\rightarrow S$: $a\mapsto n(a,e)e-e*a$ is an automorphism of both $(S,*,n)$ and of $(S,\cdot ,n)$, where
\[
a\cdot b=(e*a)*(b*e),
\]
for any $a,b\in S$ (see \cite[Theorem 2.5]{EP96}). Moreover, $\tau^3=1$ holds.

Note that $(S,\cdot,n)$ is a Cayley algebra with unity $e$ and that the multiplication $*$ becomes
\[
a*b=\tau(\bar a)\cdot\tau^2(\bar b),
\]
for any $a,b\in S$, so that $(S,*,n)$ is a Petersson algebra.

In this case, $e_1=x$ and $e_2=x*x=e-e_1$ are idempotents of $(S,\cdot,n)$ with $e_1\cdot e_2=e_2\cdot e_1=0$ and we may consider again the associated Peirce decomposition $S=ke_1\oplus ke_2\oplus U\oplus V$ as before. Then $\tau(e_1)=e_1$, $\tau(e_2)=e_2$, so both $U$ and $V$ are invariants under $\tau$. Since the characteristic of $k$ is not $3$, the result in \cite[Theorem 3.5]{EP96} forces the minimal polynomial of the restrictions $\tau\vert_U$ and $\tau\vert_V$ to be exactly $X^3-1$. Otherwise, the algebra would be para-Hurwtiz. Now, take an element $u\in U$ such that the minimal polynomial of $u$ relative to $\tau$ is $X^3-1$. Then $\{u,\tau(u),\tau^2(u)\}$ is a basis of $U$, so
\[
0\ne n(u,\tau(u)\cdot\tau^2(u)=n(u,u*u).
\]
As before, it is enough to take the element $y=u$.
\end{proof}

\begin{remark}\label{re:interesting}
The Proposition above and its proof are valid in characteristic $3$, provided that either $\alpha\not\in k^3$, or $\alpha\in k^3$ and the minimal polynomial of $\tau\vert_U$ is $X^3-1$ (notation as in the proof above). \hfill\qed
\end{remark}

\smallskip

The next result is inspired by (and extends) \cite[Proposition 4.1]{Eld97}.

\begin{theorem}\label{th:bonito}
Let $(S,*,n)$ be an Okubo algebra over an arbitrary field $k$, and let $x,y\in S$ be two elements satisfying the conditions:
\[
n(x)=n(y)=0,\quad n(x,x*x)\ne 0\ne n(y,y*y),\quad n(\alg{x},\alg{y})=0.
\]
Then either $x*y=0$ or $y*x=0$ but not both.

Moreover, assuming $x*y=0$, the set $\{x,x*x,y,y*y,y*x,(y*y)*(x*x),x*(y*y),(x*x)*y\}$ is a basis of $S$, and the multiplication table in this basis is completely determined and only depends on $\alpha=n(x,x*x)$ and $\beta=n(y,y*y)$.
\end{theorem}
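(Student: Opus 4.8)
The plan is to reduce everything to the structure already extracted in Proposition~\ref{pr:interesting} and its proof, where a Petersson presentation $S=\overline{C}_\tau$ of a Cayley algebra $(S,\cdot,n)$ was produced together with a canonical basis adapted to the idempotents $e_1,e_2$. First I would normalize: by rescaling $x$ and $y$ I may assume $\alpha=n(x,x*x)$ and $\beta=n(y,y*y)$ are whatever representatives are convenient, and I would reuse the fact that $\alg{x}=kx\oplus k(x*x)$ and $\alg{y}=ky\oplus k(y*y)$ are orthogonal nondegenerate two-dimensional subalgebras. The element $e=x+x*x$ (after normalizing $\alpha$, or more generally a suitable nonzero idempotent built from $x$) gives, via $a\cdot b=(e*a)*(b*e)$, a Cayley structure with order-$3$ automorphism $\tau$ and $x*y=\tau(\bar x)\cdot\tau^2(\bar y)$; here $e_1=x$, $e_2=x*x$ are orthogonal idempotents summing to the unit $e$, so we get a Peirce decomposition $S=ke_1\oplus ke_2\oplus U\oplus V$ with $U,V$ three-dimensional and $\tau$-invariant, and as in the proof of Proposition~\ref{pr:interesting} the element $y$ (being orthogonal to $\alg{x}$, isotropic, with $n(y,y*y)\neq 0$) lies in $U$ or in $V$, say $y\in U$ after possibly swapping the roles (which swaps $x*y=0$ with $y*x=0$).

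Next I would prove the dichotomy ``$x*y=0$ or $y*x=0$, not both.'' In the Cayley algebra $(S,\cdot,n)$ with unit $e$, the Peirce relations give $e_1\cdot U=U$, $U\cdot e_2=U$, $U\cdot e_1=0=e_2\cdot U$, and similarly for $V$, plus $U\cdot U\subseteq V$, $V\cdot V\subseteq U$, $U\cdot V\subseteq ke_1$, $V\cdot U\subseteq ke_2$. Translating $x*y$ back through $a*b=\tau(\bar a)\cdot\tau^2(\bar b)$ with $\bar x = n(x,e)e-x$, $\bar y=n(y,e)e-y$: since $x=e_1$ and $y\in U$, one computes $\bar x$ lies in $ke_1\oplus ke_2$ (indeed $n(x,e)=1$, so $\bar x = e_2$) and $\bar y\in U$ (because $n(y,e)=0$, as $y\perp\alg{x}\ni x*x$ and $n(y,x)=0$); then $\tau(\bar x)=e_2$ or $e_1$ depending on normalization, $\tau^2(\bar y)\in U$, and $x*y = e_{?}\cdot\tau^2(\bar y)$ which is either $0$ or in $U$ according to which idempotent appears — so $x*y=0$ iff that idempotent is $e_2$. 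Symmetrically $y*x = \tau(\bar y)\cdot\tau^2(\bar x)$ with $\tau^2(\bar x)=e_1$ or $e_2$, giving $y*x=0$ in exactly the complementary case. Since $\tau(e_1)=e_1$ and $\tau(e_2)=e_2$ are fixed (not interchanged), the two conditions are genuinely exclusive, establishing the first assertion; relabelling, assume $x*y=0$.

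For the second part I would verify linear independence of $\{x,\ x*x,\ y,\ y*y,\ y*x,\ (y*y)*(x*x),\ x*(y*y),\ (x*x)*y\}$ and identify each with a Peirce component: $x=e_1$, $x*x=e_2$ span $ke_1\oplus ke_2$; then $y, (x*x)*y$ and one more element lie in $U$ while $y*y, y*x$ and the remaining one lie in $V$ (using $U\cdot U\subseteq V$ etc., translated back through the Petersson formula), and orthogonality of $U$ and $V$ plus the known pairings $n(e_1,e_2)=1$, $n(U,V)$ perfect give that these eight vectors form a basis. Finally, to show the full multiplication table is determined by $(\alpha,\beta)$, I would observe that $u:=y$ generates $U$ as a $\tau$-module with minimal polynomial $X^3-1$ (from $n(y,y*y)\neq 0$, exactly as in Proposition~\ref{pr:interesting}), so $\{y,\tau(y),\tau^2(y)\}$ is a basis of $U$ and likewise $\tau$ acts on $V$; writing everything in terms of $y$, $x$, $\tau$, the Cayley multiplication on a canonical basis is fixed (Table~\ref{ta:splitCayley}) up to the two scaling parameters coming from $n(y,y*y)=\beta$ and the normalization of $e$ encoding $\alpha$, and then $*$ is recovered from $\cdot$ and $\tau$; comparing with Theorem~\ref{th:unified} one sees the resulting algebra is $\calO_{\alpha,\beta}$ with the listed basis corresponding to the $x_{ij}$ up to the obvious dictionary.

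I expect the main obstacle to be the bookkeeping in the exclusivity statement and in pinning down which of the eight listed products lands in $U$ versus $V$: the Petersson twist $a*b=\tau(\bar a)\cdot\tau^2(\bar b)$ mixes conjugation (which preserves $ke_1\oplus ke_2$ and swaps nothing on $U,V$ beyond sign/shift) with $\tau$ (which permutes a $\tau$-basis of $U$ cyclically), so one must track carefully how $\tau$ and $\bar{\phantom{x}}$ interact with the Peirce grading to be sure, e.g., that $x*y=0$ and $y*x=0$ cannot hold simultaneously — the key point being $\tau(e_i)=e_i$, so the ``which idempotent'' obstruction is rigid and not permuted away. Once the $U/V$-placement of each basis vector is settled, the remaining products are forced by the composition identity \eqref{eq:xyx}, associativity of the norm \eqref{eq:normassociative}, and the Peirce multiplication rules, and depend only on $\alpha=n(x,x*x)$ and $\beta=n(y,y*y)$.
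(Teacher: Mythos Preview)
Your approach has a genuine gap at its central step. You assert that ``the element $y$ \dots\ lies in $U$ or in $V$,'' but this is precisely what needs to be proved: recall from the proof of Proposition~\ref{pr:interesting} that $U=\{z\in\alg{x}^\perp:x*z=0\}$, so $y\in U$ is \emph{equivalent} to $x*y=0$. An arbitrary element of $U\oplus V$ that is isotropic and satisfies $n(y,y*y)\ne 0$ need not lie in $U$ or in $V$; you have not argued why the hypotheses force this, and ``swapping the roles'' does not help if $y$ has nontrivial components in both Peirce pieces. The argument as written is therefore circular.

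There are two further obstructions. First, to get a Petersson presentation with an honest order-$3$ automorphism $\tau$ you need an idempotent $e$ built from $x$, which requires $\alpha=n(x,x*x)\in k^3$; when $\alpha\notin k^3$ the proof of Proposition~\ref{pr:interesting} uses a different Cayley product $a\cdot b=l_p(a)*r_p(b)$ that is not a Petersson twist, so the translation $a*b=\tau(\bar a)\cdot\tau^2(\bar b)$ is unavailable. Second, the theorem is stated over an \emph{arbitrary} field, including characteristic~$3$, where the Petersson machinery of Proposition~\ref{pr:interesting} is delicate (cf.\ Remark~\ref{re:interesting}). The paper's proof avoids all of this by arguing directly inside $(S,*,n)$: it uses the similarity $l_{x+x*x}$ (whose norm is $n(x,x*x)\ne 0$) to build a basis, then shows via norm associativity that $x*y$ is orthogonal to seven of the eight basis vectors and hence lies in the line $k\bigl((x+x*x)*y\bigr)$; writing $x*y=\gamma(x+x*x)*y$ and using isotropy gives $\gamma(\gamma-1)=0$, whence $x*y=0$ or $(x*x)*y=0$, and the latter forces $y*x=0$. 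The ``not both'' clause then follows from $0\ne y*(x+x*x)=y*x$ once $x*y=0$ gives $y*(x*x)=0$.
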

\begin{proof}
Since $n(x)=0\ne n(x,x*x)$, and because of \eqref{eq:xyx}, it follows that  $\alg{x}$ is spanned by $x$ and $x*x$, and similarly $\alg{y}$ is spanned by $y$ and $y*y$. The associativity of the norm gives:
\[
n\left(\alg{x}*\alg{y},\alg{x}\right)\subseteq n\left(\alg{y},\alg{x}*\alg{x}\right)=0,
\]
and, in the same vein:
\begin{multline}\label{eq:bonito1}
n\left(\alg{x}*\alg{y},\alg{x}+\alg{y}\right)=0\\ =
 n\left(\alg{y}*\alg{x},\alg{x}+\alg{y}\right).
\end{multline}
Besides, using the linearization of \eqref{eq:xyx} we obtain:
\begin{equation}\label{eq:bonito2}
\begin{split}
n\bigl(\alg{x}*\alg{y}&,\alg{y}*\alg{x}\bigr)\\
  &\subseteq
  n\left(\alg{x},\alg{y}*\bigl(\alg{y}*\alg{x}\bigr)\right)\\
  &\subseteq n\left(\alg{x},\alg{x}*\bigl(\alg{y}*\alg{y}\right)\\
  &\subseteq n\left(\alg{x}*\alg{x},\alg{y}*\alg{y}\right)\\
  &=n\left(\alg{x},\alg{y}\right)=0.
\end{split}
\end{equation}
As $n(x+x*x)=n(x,x*x)\ne 0$, the left and right multiplications by $x+x*x$ are similarities, so $(x+x*x)*\alg{y}$ and $\alg{y}*(x+x*x)$ are orthogonal (because of \eqref{eq:bonito2}) nondegenerate two dimensional subspaces of $\left(\alg{x}+\alg{y}\right)^\perp$. By dimension count, it turns out that
\begin{equation}\label{eq:bonito3}
\{x,x*x,y,y*y,(x+x*x)*y,(x+x*x)*(y*y),y*(x+x*x),(y*y)*(x+x*x)\}
\end{equation}
is a basis of $S$.

Now, the element $x*y\in \alg{x}*\alg{y}$ is orthogonal to $x$, $x*x$, $y$, $y*y$, $y*(x+x*x)$ and $(y*y)*(x+x*x)$ because of \eqref{eq:bonito1} and \eqref{eq:bonito2}, and
\[
n(x*y,(x+x*x)*y)=n(x,y*((x+x*x)*y))=n(y)n(y,x+x*x)=0.
\]
Thus $x*y$ belongs to the orthogonal subspace to the seven dimensional space spanned by the basic elements $x,x*x,y,y*y,(x+x*x)*y, y*(x+x*x),(y*y)*(x+x*x)$, which is one dimensional and spanned by $(x+x*x)*y$. Hence there is a scalar $\alpha\in k$ such that $x*y=\alpha(x+x*x)*y$, or $\bigl((\alpha -1)x+\alpha x*x)*y=0$. This implies that the element $(\alpha-1)x+x*x$ is isotropic, so
\[
0=n\bigl((\alpha-1)x+\alpha x*x\bigr)=\alpha(\alpha-1)n(x,x*x).
\]
Therefore, either $\alpha=0$ and hence $x*y=0$, or $\alpha=1$ and $(x*x)*y=0$. In the latter case,
\begin{multline*}
(y*x)*(x+x*x)=(y*x)*x+(y*x)*(x*x)\\
  =-(x*x)*y-((x*x)*x)*y=-n(x)x*y=0,
\end{multline*}
where we have used \eqref{eq:xyx} and the fact that $n(x)=0$. Hence $y*x=0$.

Therefore, either $x*y=0$ or $y*x=0$. Permuting $x$ and $y$ if necessary, it can be assumed that $x*y=0$. But this forces $y*(x*x)=-x*(x*y)=0$, so $0\ne y*(x+x*x)=y*x$ and $0\ne (x+x*x)*y=(x*x)*y$. Also, $(y*y)*x=-(x*y)*y=0$, so $0\ne (y*y)*(x+x*x)=(y*y)*(x*x)$, while $(x*x)*(y*y)=-y*(y*(x*x))=0$, so $0\ne (x+x*x)*(y*y)=x*(y*y)$. Hence the basis in \eqref{eq:bonito3} becomes, after reordering it, the basis
\begin{equation}\label{eq:basisxy}
\{x,x*x,y,y*y,y*x,(y*y)*(x*x),x*(y*y),(x*x)*y\}
\end{equation}
as desired.

It is now easy to check that the multiplication constants in this basis are completely determined (see \cite[proof of Theorem 4.3]{Eld97}) in terms of $\alpha=n(x,x*x)$ and $\beta=n(y,y*y)$. For instance,
\begin{multline*}
(y*x)*((x*x)*y)\\
 =-y*((x*x)*(y*x))=-y*\bigl(n(x,x*x)y-x*(y*(x*x))\bigr)=-\alpha y*y,
\end{multline*}
or
\begin{multline*}
((x*x)*y)*(y*x)\\
  =n(x*x,y*x)y-((y*x)*y)*(x*x)=n(x)n(x,y)y-n(y)x*(x*x)=0.
\end{multline*}
\end{proof}

\begin{corollary}\label{co:bonito}
Let $(S,*,n)$ be an Okubo algebra over an arbitrary field $k$ containing elements $x,y\in S$ satisfying the conditions in Theorem \ref{th:bonito}:
\[
n(x)=n(y)=0,\quad n(x,x*x)\ne 0\ne n(y,y*y),\quad n(\alg{x},\alg{y})=0,
\]
and such that $x*y=0$. Then $S$ is isomorphic to the Okubo algebra $\calO_{\alpha,\beta}$, with $\alpha=n(x,x*x)$ and $\beta=n(y,y*y)$, under an isomorphism that takes $x$ to $-x_{1,0}$ and $y$ to $-x_{0,1}$.

In particular, the Okubo algebra $S$ is graded over $\bZ_3^2$ with $x\in S_{(\bar 1,\bar 0)}$ and $y\in S_{(\bar 0,\bar 1)}$.
\end{corollary}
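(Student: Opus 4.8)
The plan is to deduce the statement from Theorem~\ref{th:bonito} by producing, inside $\calO_{\alpha,\beta}$ itself, a pair of elements satisfying exactly the same hypotheses as $x$ and $y$, with the same two invariants, and then transferring the structure.

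First I would take in $\calO_{\alpha,\beta}$, with the multiplication of Table~\ref{ta:Oalphabeta}, the elements $x'=-x_{1,0}$ and $y'=-x_{0,1}$, and check straight from the table that they meet the hypotheses of Theorem~\ref{th:bonito}. From $x_{1,0}*x_{1,0}=-\alpha x_{-1,0}$ and $(x_{1,0}*x_{1,0})*x_{1,0}=(-\alpha x_{-1,0})*x_{1,0}=0$ one reads off, via \eqref{eq:xyx}, that $n(x')=0$ and that $\alg{x'}=\espan{x_{1,0},x_{-1,0}}$; the identity $(a*a)*(a*a)=-((a*a)*a)*a+n(a,a*a)a$ used in the proof of Proposition~\ref{pr:interesting}, applied to $a=x_{1,0}$ (for which $(x_{1,0}*x_{1,0})*(x_{1,0}*x_{1,0})=-\alpha x_{1,0}$), gives $n(x_{1,0},x_{1,0}*x_{1,0})=-\alpha$, hence $n(x',x'*x')=\alpha\ne 0$. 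This sign is precisely why one must send $x$ to $-x_{1,0}$ and not to $x_{1,0}$. In the same way $n(y')=0$, $\alg{y'}=\espan{x_{0,1},x_{0,-1}}$ and $n(y',y'*y')=\beta\ne 0$. Since the polar form of the norm of an Okubo algebra pairs $x_{ij}$ only with $x_{-i,-j}$, all four numbers $n(x_{\pm 1,0},x_{0,\pm 1})$ vanish, so $n(\alg{x'},\alg{y'})=0$; and finally $x'*y'=x_{1,0}*x_{0,1}=0$ directly from the table.

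Now Theorem~\ref{th:bonito} applies both to $S$ with the pair $(x,y)$ and to $\calO_{\alpha,\beta}$ with the pair $(x',y')$. In each case it yields a basis of the shape $\{z,z*z,w,w*w,w*z,(w*w)*(z*z),z*(w*w),(z*z)*w\}$ whose structure constants depend only on $n(z,z*z)$ and $n(w,w*w)$, which are $\alpha$ and $\beta$ in both cases. Hence the linear bijection $\varphi\colon S\to\calO_{\alpha,\beta}$ sending the basis of $S$ term by term to the corresponding basis of $\calO_{\alpha,\beta}$ is multiplicative, with $\varphi(x)=x'=-x_{1,0}$ and $\varphi(y)=y'=-x_{0,1}$; and since $n(a)b=(a*b)*a$ recovers the norm from the multiplication, $\varphi$ also preserves $n$, so it is an isomorphism of composition algebras. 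For the last assertion I would then observe, again by inspecting Table~\ref{ta:Oalphabeta}, that the basis of $\calO_{\alpha,\beta}$ produced out of $(x',y')$ coincides up to nonzero scalars with the standard basis $\{x_{ij}\}$, with $z\in(\calO_{\alpha,\beta})_{(\bar 1,\bar 0)}$, $w\in(\calO_{\alpha,\beta})_{(\bar 0,\bar 1)}$, $w*z\in(\calO_{\alpha,\beta})_{(\bar 1,\bar 1)}$, and so on; thus it is homogeneous for the standard $\bZ_3^2$-grading of $\calO_{\alpha,\beta}$ (Remark~\ref{re:Z3naturalgrading}). Transporting that grading back through $\varphi$ gives a $\bZ_3^2$-grading of $S$ with $x=\varphi^{-1}(-x_{1,0})\in S_{(\bar 1,\bar 0)}$ and $y=\varphi^{-1}(-x_{0,1})\in S_{(\bar 0,\bar 1)}$.

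The real content is already contained in Theorem~\ref{th:bonito}, so there is no serious obstacle left; the only points that need care are the sign bookkeeping (why $x\mapsto -x_{1,0}$ and $y\mapsto -x_{0,1}$) and the verification that the abstract basis of Theorem~\ref{th:bonito}, when specialized inside $\calO_{\alpha,\beta}$, is indeed a rescaling of the standard basis, so that the asserted $\bZ_3^2$-degrees of $x$ and $y$ come out right.
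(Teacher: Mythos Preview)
Your proposal is correct and follows essentially the same approach as the paper's own proof: both verify that $-x_{1,0}$ and $-x_{0,1}$ in $\calO_{\alpha,\beta}$ satisfy the hypotheses of Theorem~\ref{th:bonito} with invariants $\alpha$ and $\beta$, and then invoke the uniqueness of the multiplication table asserted there. Your version simply supplies the explicit checks (in particular the sign computation showing $n(-x_{1,0},(-x_{1,0})*(-x_{1,0}))=\alpha$) that the paper leaves implicit.
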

\begin{proof}
The Okubo algebra $\calO_{\alpha,\beta}$ is generated by the elements $-x_{1,0}$ and $-x_{0,1}$, which satisfy the same properties of $x$ and $y$ in the hypotheses of the Corollary. The result then follows by the uniqueness in Theorem \ref{th:bonito}.
\end{proof}

The two previous results imply easily the following description of the Okubo algebras with nonzero idempotents and isotropic norm:

\begin{theorem}\label{th:isotropic}
The Okubo algebras with isotropic norm and nonzero idempotents are precisely the algebras $\calO_{1,\beta}$ with $0\ne \beta\in k$.
\end{theorem}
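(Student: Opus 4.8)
The plan is to prove the two inclusions separately. One of them is a direct computation exhibiting a nonzero idempotent in each $\calO_{1,\beta}$; for the other I would use the realization of a symmetric composition algebra with a nonzero idempotent as a Petersson algebra — exactly as in the last part of the proof of Proposition \ref{pr:interesting} — together with Proposition \ref{pr:interesting} itself and Corollary \ref{co:bonito}.

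For the first inclusion, I would first observe that every $\calO_{\alpha,\beta}$ has isotropic norm, since \eqref{eq:xyx} applied to $x_{1,0}$ gives $n(x_{1,0})x_{1,0}=(x_{1,0}*x_{1,0})*x_{1,0}=-\alpha(x_{-1,0}*x_{1,0})=0$. Then, setting $\alpha=1$ in Table \ref{ta:Oalphabeta}, one has $x_{1,0}*x_{1,0}=-x_{-1,0}$, $x_{-1,0}*x_{-1,0}=-x_{1,0}$ and $x_{1,0}*x_{-1,0}=x_{-1,0}*x_{1,0}=0$, so a one-line check shows that $e:=-(x_{1,0}+x_{-1,0})$ satisfies $e*e=e$. (This is precisely the idempotent $x+x*x$ from the proof of Proposition \ref{pr:interesting}, once $n(x,x*x)$ has been normalized to $1$.)

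For the converse, let $(S,*,n)$ be an Okubo algebra with isotropic norm and a nonzero idempotent $e$. From \eqref{eq:xyx} with $x=y=e$ one gets $n(e)e=(e*e)*e=e$, hence $n(e)=1$, and then, as in the proof of Proposition \ref{pr:interesting}, the map $\tau\colon a\mapsto n(a,e)e-e*a$ is an automorphism of $(S,*,n)$ with $\tau^3=1$, the product $a\cdot b=(e*a)*(b*e)$ makes $S$ into a Cayley algebra with unit $e$, and $a*b=\tau(\bar a)\cdot\tau^2(\bar b)$. Since the norm is isotropic, $(S,\cdot,n)$ is the split Cayley algebra $C(k)$; and since $S$ is an Okubo algebra it is not para-Hurwitz, so by \cite[Theorem 3.5]{EP96} (and, in characteristic~$3$, by the corresponding analysis in \cite{Eld97}) there is a Peirce decomposition $S=ke_1\oplus ke_2\oplus U\oplus V$ of $(S,\cdot,n)$ with $\tau(e_i)=e_i$ and with $\tau\vert_U$ of minimal polynomial $X^3-1$. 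Because $e_1$ is a Peirce idempotent of $C(k)$, $n(e_1)=0$ and $\bar e_1=e-e_1=e_2$, so $e_1*e_1=\tau(\bar e_1)\cdot\tau^2(\bar e_1)=e_2\cdot e_2=e_2$; hence $x:=e_1$ satisfies $n(x)=0$ and $n(x,x*x)=n(e_1,e_2)=1\neq0$.

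At this point Proposition \ref{pr:interesting} applies to $x$ (in characteristic~$3$ through Remark \ref{re:interesting}, since $n(x,x*x)=1\in k^3$ and $\tau\vert_U$ has minimal polynomial $X^3-1$), producing $y\in S$ with $n(y)=0\neq n(y,y*y)$, $x*y=0$ and $n(\alg{x},\alg{y})=0$, and Corollary \ref{co:bonito} then gives $S\cong\calO_{1,\beta}$ with $\beta=n(y,y*y)$. The delicate step is the use of \cite[Theorem 3.5]{EP96}: I need that the order-$3$ automorphism attached to the idempotent preserves a Peirce decomposition and restricts there to an operator of minimal polynomial $X^3-1$, and in characteristic~$3$ — where $X^3-1=(X-1)^3$ — this has to be read off from the characteristic-$3$ structure theory of Okubo algebras rather than from \cite{EP96}; everything else, in particular the computation $e_1*e_1=e_2$ and the subsequent appeals to Proposition \ref{pr:interesting} and Corollary \ref{co:bonito}, is routine once such an $x$ with $n(x)=0$ and $n(x,x*x)=1$ has been found.
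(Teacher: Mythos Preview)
Your overall strategy matches the paper's: exhibit $e=-(x_{1,0}+x_{-1,0})$ as an idempotent in $\calO_{1,\beta}$, and conversely produce an element $x$ with $n(x)=0$ and $n(x,x*x)=1$, then invoke Proposition~\ref{pr:interesting} and Corollary~\ref{co:bonito}. You are also right that Proposition~\ref{pr:interesting} already yields $x*y=0$, so the paper's extra case split (replacing $x$ by $x*x$ when $y*x=0$) is in fact unnecessary.

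The gap is in your construction of $x$. You take $x=e_1$ from a Peirce decomposition of the split Cayley algebra $(S,\cdot,n)$ and assert that $\tau(e_1)=e_1$, citing \cite[Theorem~3.5]{EP96}. But that result, as invoked in the proof of Proposition~\ref{pr:interesting}, only constrains the minimal polynomial of $\tau\vert_U$ \emph{once} a $\tau$-stable Peirce decomposition is already in hand; it does not produce one. Having Peirce idempotents fixed by $\tau$ amounts to the fixed quaternion subalgebra $Q=\{a\in S:\tau(a)=a\}$ being split, and this can fail: over a field without primitive cube roots of~$1$ one may take a division quaternion algebra $Q$ containing an element $w$ with $w^2+w+1=0$ and a scalar $\alpha$ represented by the norm of $Q$, so that $C=CD(Q,\alpha)$ is split while the Petersson algebra $\bar C_\tau$ of Proposition~\ref{pr:idempotentsno3} has $\tau$-fixed subalgebra exactly $Q$, which contains no isotropic idempotent. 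For the idempotent $e=1$ of this Okubo algebra your computation $e_1*e_1=e_2$ does not go through.

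The paper avoids this by citing \cite[Lemma~3]{EldTwisted} in characteristic~$\ne 3$ to obtain $x$ with $n(x)=0$ and $n(x,x*x)=1$ directly from the idempotent, and by deferring the characteristic~$3$ case entirely to \cite[Theorem~B]{EP96} and \cite[Theorem~5]{Eld97} rather than attempting a unified argument via Remark~\ref{re:interesting}.
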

\begin{proof}
The element $e=-(x_{1,0}+x_{-1,0})$ in $\calO_{1,\beta}$ is a nonzero idempotent. Conversely, let $(S,*,n)$ be an Okubo algebra with a nonzero idempotent $e$. Because of \eqref{eq:xyx}, its norm is $1$. If the characteristic of $k$ is $3$, the result follows from \cite[Theorem B]{EP96} or \cite[Theorem 5.(3)a and (3)b]{Eld97}. On the other hand, if the characteristic is $\ne 3$, the result in \cite[Lemma 3]{EldTwisted} shows the existence of an element $0\ne x\in S$ such that $n(x)=0$ and $n(x,x*x)=1$. Now, Proposition \ref{pr:interesting} shows the existence of another element $y$ with $n(y)=0\ne n(y,y*y)=\beta$ and $n(\alg{x},\alg{y})=0$. Theorem \ref{th:bonito} shows that either $x*y=0$ or $y*x=0$. In case $x*y=0$, the result follows from Corollary \ref{co:bonito}, while if $y*x=0$, $(x*x)*y=-(y*x)*x=0$, while $n(x*x,(x*x)*(x*x))=n(x*x,n(x,x*x)x-((x*x)*x)*x)=n(x*x,x)=1$, so that we can substitute $x$ by $x*x$ and apply again Corollary \ref{co:bonito}.
\end{proof}

\smallskip

As mentioned before Remark \ref{re:Z3naturalgrading}, over fields of characteristic $3$, the norm of any Okubo algebra is isotropic, so the previous Theorem gives the description of any Okubo algebra with nonzero idempotents over these fields. On the other hand, the Okubo algebras with nonzero idempotents over fields of characteristic $\ne 3$, no matter wether its norm is isotropic or not, are easily described as follows:

\begin{proposition}\label{pr:idempotentsno3}
Let $(S,*,n)$ be an Okubo algebra with a nonzero idempotent over a field $k$ of characteristic $\ne 3$. Then there is a quaternion algebra $Q$ which contains a two dimensional subalgebra $K=k1+kw$, with $w^2+w+1=0$ (that is, $K$ is isomorphic to the commutative separable algebra $k[X]/(X^2+X+1)$), and a nonzero scalar $\alpha\in k$, such that $(S,*,n)$ is isomorpic to the Petersson algebra $\bar C_\tau$, where $C$ is the Cayley algebra $C=CD(Q,\alpha)=Q\oplus Qu$, $u^2=\alpha$, and $\tau$ is the order $3$ automorphism of $C$ determined by the conditions:
\[
\tau(q)=q\ \textrm{for any $q\in Q$},\qquad \tau(u)=wu.
\]
\end{proposition}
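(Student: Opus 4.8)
The plan is to produce the Petersson presentation straight from the given idempotent and then read the automorphism off it. First, by \eqref{eq:xyx} the idempotent $e$ has $n(e)=1$. Then, following \cite[Theorem~2.5]{EP96}, the linear map $\tau\colon S\to S$, $\tau(a)=n(a,e)e-e*a$, is an automorphism of $(S,*,n)$ with $\tau^3=\mathrm{id}$; the new product $a\cdot b=(e*a)*(b*e)$ turns $C:=(S,\cdot,n)$ into a Cayley algebra with unit $e$; and $a*b=\tau(\bar a)\cdot\tau^2(\bar b)$ for all $a,b\in S$, so that $S=\bar C_\tau$. If $\tau$ were the identity this would read $a*b=\bar a\cdot\bar b$, i.e.\ $S$ would be the para-Hurwitz algebra $\bar C$, contradicting that $S$ is Okubo; hence $\tau$ has order exactly $3$.

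Next I would study $\tau$ purely as an order $3$ automorphism of the Cayley algebra $C$ and identify its fixed-point subalgebra $Q:=\Fix(\tau)$. Since $\charac k\neq3$, the operator $\tfrac13(1+\tau+\tau^2)$ is the projection of $C$ onto $Q$ along $\ker(1+\tau+\tau^2)$, and these two summands are orthogonal: for $q\in Q$ and $z$ with $z+\tau(z)+\tau^2(z)=0$ one gets $3\,n(q,z)=n\bigl(q,z+\tau(z)+\tau^2(z)\bigr)=0$, because $\tau$ is an isometry fixing $q$. Hence the norm is nondegenerate on $Q$, so $Q$ is a composition subalgebra; it contains $e=1_C$, so it is a Hurwitz subalgebra, of dimension $1$, $2$ or $4$ (the value $8$ is excluded by $\tau\neq\mathrm{id}$). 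The point I need is that $\dim_k Q=4$, i.e.\ $Q$ is a quaternion subalgebra. This is exactly the distinction between Petersson algebras of para-Hurwitz type and of Okubo type, and I would deduce it from \cite[Theorem~3.5]{EP96}: a smaller fixed subalgebra would force $\bar C_\tau$ to be para-Hurwitz, which is excluded. This is the same structural input that the proof of Proposition~\ref{pr:interesting} uses (there phrased, in the split case, through the minimal polynomial of $\tau$ on the Peirce components).

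With $Q$ a quaternion subalgebra of $C$, the rest is bookkeeping inside the Cayley-Dickson process. Pick $u\in Q^\perp$ with $\alpha:=-n(u)\neq0$; then $Q^\perp=Qu$, $C=CD(Q,\alpha)=Q\oplus Qu$ and $u\cdot u=\alpha 1$. Since $\tau$ fixes $Q$ and is an isometry it preserves $Q^\perp=Qu$, so $\tau(u)=wu$ for a unique $w\in Q$, whence $\tau(qu)=q\cdot(wu)=(wq)u$ for all $q\in Q$. Applying $\tau$ to a product $(qu)(q'u)=\alpha\bar{q'}q\in Q$ and comparing with $\tau(qu)\,\tau(q'u)=\alpha\,n(w)\,\bar{q'}q$ forces $n(w)=1$; from $\tau^3=\mathrm{id}$ one gets $w^3=1$, and $\tau\neq\mathrm{id}$ gives $w\neq1$. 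Now $w\notin k1$ (otherwise $w=\lambda 1$ with $\lambda^2=n(w)=1$ and $\lambda^3=1$, forcing $\lambda=1$), so by the degree two relation \eqref{eq:CayleyHamilton} in $Q$ the element $w$ satisfies $w^2=n(w,1)w-1$; substituting this into $w^3=1$ and using that $\{1,w\}$ is independent yields $n(w,1)=-1$, i.e.\ $w^2+w+1=0$. Thus $K:=k1+kw\cong k[X]/(X^2+X+1)$ is a two dimensional subalgebra of $Q$, commutative, associative and separable since $-3\neq0$, and $\tau$ is precisely the order $3$ automorphism of $C=CD(Q,\alpha)$ determined by $\tau|_Q=\mathrm{id}$ and $\tau(u)=wu$ — conversely, such data does define an order $3$ automorphism, the relevant relations being exactly $n(w)=1$ and $w^3=1$, $w\neq1$. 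Hence $S=\bar C_\tau$ in the asserted form.

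The main obstacle is the claim $\dim_k Q=4$ in the second step; apart from that, the argument only uses \cite[Theorem~2.5]{EP96} and elementary manipulations inside $CD(Q,\alpha)$. I would resolve it exactly as the proof of Proposition~\ref{pr:interesting} does, by invoking the classification of order $3$ automorphisms of Cayley algebras, equivalently the criterion of \cite[Theorem~3.5]{EP96} for a Petersson algebra to be para-Hurwitz.
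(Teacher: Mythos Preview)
Your argument is correct and follows essentially the same route as the paper's own proof: invoke \cite[Theorems~2.5 and 3.5]{EP96} to present $S$ as a Petersson algebra $\bar C_\tau$ with four-dimensional fixed subalgebra $Q$, then read off $w$ from $\tau(u)=wu$ in the Cayley--Dickson decomposition $C=CD(Q,\alpha)$. The only cosmetic difference is that the paper deduces $w^2+w+1=0$ directly from the minimal polynomial of $\tau\vert_{Q^\perp}$, whereas you derive it from $n(w)=1$, $w^3=1$, $w\ne 1$ and the degree-two equation; both are fine.
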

\begin{proof}
Because of \cite[Therems 2.5 and 3.5]{EP96}, we know that the Okubo algebra $(S,*,n)$ is isomorphic to a Petersson algebra $\bar C_\tau$, where $C$ is a Cayley algebra and $\tau$ an order $3$ automorphism of $C$ such that $Q=\{x\in C: \tau(x)=x\}$ is a quaternion subalgebra. Take $u\in Q^\perp$ with $n(u)=-\alpha\ne 0$, so that $C=Q\oplus Qu=CD(Q,\alpha)$. Besides, $\tau(Q^\perp)=Q^\perp=Qu$, so that $\tau(u)=wu$ for some $w\in Q$. Since $u^2=\alpha\in k$, it follows that $w\not\in k$, and since the minimal polynomial of the restriction of $\tau$ to $Q^\perp$ is $X^2+X+1$, it follows that $w^2+w+1=0$, and the result follows.
\end{proof}

\begin{remark}\label{re:Z2gradingsOkubo}
Let $C$, $Q$, $K=k1+kw$, $u$, $\tau$ and $S=\bar C_\tau$ as in the previous Proposition. Then the Okubo algebra $S$ is naturally graded in these two ways:
\begin{itemize}
\item $\bZ_2^2$-graded with $S_{(\bar 0,\bar 0)}=K$, $S_{(\bar 1,\bar 0)}=K^\perp\cap Q$, $S_{(\bar 0,\bar 1)}=Ku$, and $S_{(\bar 1,\bar 1)}=(K^\perp\cap Q)u$.

\item $\bZ_2$-graded, with $S\subo=Q$ and $S\subuno=Qu$. \hfill\qed
\end{itemize}
\end{remark}

\smallskip

Before studying the gradings on the Okubo algebras, let us give a couple of presentations of the pseudo-octonion algebra $P_8(k)$ which show some interesting gradings on this algebra. Actually, $P_8(k)$ is defined as the composition algebra $C(k)_{\tau_{st}}$ given en \eqref{eq:taust}. This definition of the pseudo-octonion algebra highlights a natural $\bZ_3$-grading, inherited from the $\bZ_3$-grading of $C(k)$
in Theorem \ref{th:gradingsCayley}, which is given by the Peirce decomposition:
\[
P_8(k)\subo=\espan{e_1,e_2},\quad P_8(k)\subuno=\espan{u_1,u_2,u_3},\quad P_8(k)_{\bar 2}=\espan{v_1,v_2,v_3}.
\]

Given a canonical basis of the split Cayley algebra $C(k)$, consider the new order $3$ automorphism $\tau_{nst}$ defined by:
\begin{equation}\label{eq:taunst}
\begin{aligned}
\tau_{nst}(e_1)&=e_1,& \tau_{nst}(e_2)&=e_2,&&\\
\tau_{nst}(u_1)&=u_2,& \tau_{nst}(u_2)&=-u_1-u_2,& \quad\tau_{nst}(u_3)&=u_3,\\
\tau_{nst}(v_1)&=-v_1+v_2,& \quad\tau_{nst}(v_2)&=-v_1,& \tau_{nst}(v_3)&=v_3.
\end{aligned}
\end{equation}

\begin{proposition}\label{pr:taunst}
The Petersson algebra $\overline{C(k)}_{\tau_{nst}}$ is isomorphic to the pseudo-octonion algebra $P_8(k)$.
\end{proposition}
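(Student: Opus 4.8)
The plan is to obtain the isomorphism from an automorphism of $C(k)$ that conjugates $\tau_{st}$ into $\tau_{nst}$, built by exhibiting a new canonical basis of $C(k)$ on which $\tau_{nst}$ behaves exactly like $\tau_{st}$ does on the original one. The enabling general fact is elementary: for a Hurwitz algebra $C$, an order $3$ automorphism $\tau$, and any $\phi\in\Aut(C)$, the map $\phi$ is an isomorphism $\bar C_\tau\to\bar C_{\phi\tau\phi^{-1}}$. Indeed, $\phi$ fixes $1$ and preserves the norm, hence commutes with the standard conjugation, so \eqref{eq:Petersson} gives $\phi(x*y)=\phi(\tau(\bar x))\cdot\phi(\tau^2(\bar y))=(\phi\tau\phi^{-1})\bigl(\overline{\phi(x)}\bigr)\cdot(\phi\tau\phi^{-1})^2\bigl(\overline{\phi(y)}\bigr)$, which is precisely the product of $\phi(x)$ and $\phi(y)$ in $\bar C_{\phi\tau\phi^{-1}}$. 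Since $P_8(k)=\overline{C(k)}_{\tau_{st}}$ by definition, it therefore suffices to produce $\phi\in\Aut(C(k))$ with $\phi\tau_{st}=\tau_{nst}\phi$.

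To construct $\phi$ I would produce a canonical basis $\{e_1,e_2,\tilde u_1,\tilde u_2,\tilde u_3,\tilde v_1,\tilde v_2,\tilde v_3\}$ of $C(k)$ on which $\tau_{nst}$ fixes $e_1,e_2$ and cyclically permutes the $\tilde u_i$ and the $\tilde v_i$ modulo $3$: then the linear bijection $e_i\mapsto e_i$, $u_i\mapsto\tilde u_i$, $v_i\mapsto\tilde v_i$ is an automorphism of $C(k)$ (both bases share the multiplication table of Table \ref{ta:splitCayley}) and, by construction, $\phi\tau_{st}=\tau_{nst}\phi$. By Remark \ref{re:trilinearmap} it is enough to find $\tilde u_1\in U=\espan{u_1,u_2,u_3}$ whose $\tau_{nst}$-orbit $\{\tilde u_1,\ \tilde u_2:=\tau_{nst}(\tilde u_1),\ \tilde u_3:=\tau_{nst}^2(\tilde u_1)\}$ is a basis of $U$ with $n(\tilde u_1,\tilde u_2\cdot\tilde u_3)=1$; setting $\tilde v_i=\tilde u_{i+1}\cdot\tilde u_{i+2}$ then yields a canonical basis, and because $\tau_{nst}$ is an automorphism fixing $e_1,e_2$ one automatically gets $\tau_{nst}(\tilde v_i)=\tilde v_{i+1}$, so $\tau_{nst}$ acts on the new basis exactly as $\tau_{st}$ on the old. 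A short computation with Table \ref{ta:splitCayley} shows (when $\charac k\ne 3$) that $\tilde u_1=u_1+\tfrac{1}{3}u_3$ works: its orbit $\{u_1+\tfrac{1}{3}u_3,\ u_2+\tfrac{1}{3}u_3,\ -u_1-u_2+\tfrac{1}{3}u_3\}$ is a basis of $U$, and $n(\tilde u_1,\tilde u_2\cdot\tilde u_3)=1$. In fact one finds $n\bigl(w,\tau_{nst}(w)\cdot\tau_{nst}^2(w)\bigr)=3c(a^2-ab+b^2)$ for $w=au_1+bu_2+cu_3$, which makes the choice transparent.

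The computations involved are routine: checking from Table \ref{ta:splitCayley} that $\tau_{nst}$ is indeed an order $3$ automorphism (it respects the table, a finite check), and evaluating the single trilinear expression $n(\tilde u_1,\tilde u_2\cdot\tilde u_3)$. I foresee no real obstacle; the only delicate point is that a canonical basis requires $n(\tilde u_1,\tilde u_2\cdot\tilde u_3)=1$ on the nose — one cannot rescale a single $\tilde u_i$ to correct an arbitrary nonzero value of a cubic form without extracting a cube root — which is exactly why the explicit vector $u_1+\tfrac{1}{3}u_3$, available over any field of characteristic $\ne 3$, is used. (In characteristic $3$ the cubic form above is identically zero and $\tau_{st}$, $\tau_{nst}$ have different Jordan types on $U$, so the statement is to be read under the standing hypothesis $\charac k\ne 3$.)
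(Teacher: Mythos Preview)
Your argument for $\charac k\ne 3$ is correct and slightly more direct than the paper's: you conjugate $\tau_{st}$ into $\tau_{nst}$ by an automorphism of $C(k)$, whereas the paper instead shows $\overline{C(k)}_{\tau_{nst}}\simeq\calO_{1,1}$ by exhibiting elements $x=e_1$ and $y=u_1+\tfrac{1}{3}u_3$ that satisfy the hypotheses of Corollary~\ref{co:bonito} and computing $\alpha=n(x,x*x)=1$, $\beta=n(y,y*y)=1$. Both routes hinge on the same vector $u_1+\tfrac{1}{3}u_3$; yours avoids invoking the $\calO_{\alpha,\beta}$ machinery, while the paper's route fits the result into the structural framework used throughout Section~\ref{se:SymCompo}.

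There is, however, a genuine gap. The proposition is stated and used over \emph{arbitrary} fields (see Remark~\ref{re:gradingsP8ktaunst} and the cases $G=\bZ_2$, $G=\bZ_3$ (nonstandard), $G=\bZ_4$, $G=\bZ$ ($5$-grading) in Theorem~\ref{th:gradingssymmetric}, several of which are needed precisely in characteristic~$3$). Your observation that $\tau_{st}$ and $\tau_{nst}$ have different Jordan types on $U$ in characteristic~$3$ is correct, but it only shows they are not conjugate in $\Aut(C(k))$; it does \emph{not} imply that the Petersson algebras $\overline{C(k)}_{\tau_{st}}$ and $\overline{C(k)}_{\tau_{nst}}$ are non-isomorphic, because an isomorphism of symmetric composition algebras need not come from an automorphism of the underlying Cayley algebra. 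The paper handles characteristic~$3$ by a completely different method: it first checks that $e_1+e_2$ is not a para-unit of $\overline{C(k)}_{\tau_{nst}}$ (so the algebra is Okubo, not para-Hurwitz), and then uses the semilinear invariant $g(x)=n(x,x*x)$, which in characteristic~$3$ satisfies $g(x+y)=g(x)+g(y)$ and $g(\alpha x)=\alpha^3 g(x)$; evaluating $g$ on the canonical basis gives $g(S)=k^3$, and \cite[Theorem~5.1]{Eld97} then forces $S\simeq P_8(k)$. So your parenthetical dismissal of characteristic~$3$ is the error: the statement does hold there, but your conjugation strategy cannot reach it and a separate argument is required.
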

\begin{proof}
This is straightforward if the characteristic is $\ne 3$ by Corollary \ref{co:bonito}. Just take $x=e_1$ and $y=u_1+\frac{1}{3}u_3$ in the split Cayley algebra $C(k)$. Then in the Petersson algebra $\overline{C(k)}_{\tau_{nst}}$ we have:
\[
\begin{split}
x*x&=\bar e_1 \bar e_1=e_2  e_2=e_2,\\[2pt]
y*y&= \tau_{nst}\Bigl(\overline{u_1+\frac{1}{3}u_3}\Bigr)
         \tau_{nst}^2\Bigl(\overline{u_1+\frac{1}{3}u_3}\Bigr)\\
   &=\Bigl(-u_2-\frac{1}{3}u_3\Bigr) \Bigl(u_1+u_2-\frac{1}{3}u_3\Bigr)\\
   &=\frac{2}{3}v_1-\frac{1}{3}v_2+v_3,\\[2pt]
x*y&=e_2  (u_1+u_2-\frac{1}{3}u_3)=0,
\end{split}
\]
so that $\alg{x}$ and $\alg{y}$ are orthogonal two dimensional composition subalgebras, $n(x)=0=n(y)$, and hence, due to Corollary \ref{co:bonito}, $\overline{C(k)}_{\tau_{nst}}$ is isomorphic to $\calO_{\alpha,\beta}$ with
\[
\begin{split}
\alpha&=n(x,x*x)=n(e_1,e_2)=1,\\
\beta&=n(y,y*y)=n\left(u_1+\frac{1}{3}u_3,\frac{2}{3}v_1-\frac{1}{3}v_2+v_3\right)=1.
\end{split}
\]
Then $\overline{C(k)}_{\tau_{nst}}$ is isomorphic to $\calO_{1,1}$ which, in turn, is isomorphic to the pseudo-octonion algebra.

\smallskip

If the characteristic of our ground field $k$ is $3$, then note that $S=\overline{C(k)}_{\tau_{nst}}$ is $\bZ_2$-graded with $S\subo=\espan{e_1,e_2,u_3,v_3}$ and $S\subuno=\espan{u_1,u_2,v_1,v_2}$. Hence, if $S$ were a para-Hurwitz algebra, its para-unit would span its commutative center, so it would coincide with the para-unit of $S\subo$, which is $e_1+e_2$. But,
\[
\begin{split}
(e_1+e_2)*v_1&=(e_1+e_2)\cdot\tau_{nst}^2(-v_1)=-\tau_{nst}^2(v_1)=v_2,\\
v_1*(e_1+e_2)&=\tau_{nst}(-v_1)\cdot(e_1+e_2)=-\tau_{nst}(v_1)=v_1-v_2,
\end{split}
\]
so that $\overline{C(k)}_{\tau_{nst}}$ has no para-unit. Hence $\overline{C(k)}_{\tau_{nst}}$ is an Okubo algebra. Moreover, the map $g:\overline{C(k)}_{\tau_{nst}}\rightarrow k$ such that $g(x)=n(x,x*x)$ for any $x$ satisfies (see \cite[\S 3]{Eld97}) that $g(x+y)=g(x)+g(y)$ and $g(\alpha x)=\alpha^3g(x)$ for any $x,y\in \overline{C(k)}_{\tau_{nst}}$ and $\alpha\in k$. Moreover, we have:
\[
g(e_1)=g(e_2)=1,\quad g(u_1)=g(u_2)=g(u_3)=0=g(v_1)=g(v_2)=g(v_3).
\]
Hence $g\left(\overline{C(k)}_{\tau_{nst}}\right)=k^3$ which, by \cite[Theorem 5.1]{Eld97}, forces $\overline{C(k)}_{\tau_{nst}}$ to be isomorphic to $P_8(k)$.
\end{proof}

\begin{remark}\label{re:gradingsP8ktaunst}
The automorphism $\tau_{nst}$ preserves the $5$-grading of the split Cayley algebra $C(k)$ in Theorem \ref{th:gradingsCayley}, and hence $P_8(k)$ inherits this grading. Besides, coarsening this grading we obtain the following group gradings $S=\oplus_{g\in G}S_g$ of the pseudo-octonion algebra $S=P_8(k)=\overline{C(k)}_{\tau_{nst}}$:
\begin{enumerate}
\item $G=\bZ$, ($5$-grading):\newline
 $S_0=\espan{e_1,e_2}$, $S_1=\espan{u_1,u_2}$, $S_2=\espan{v_3}$, $S_{-1}=\espan{v_1,v_2}$, $S_{-2}=\espan{u_3}$.

\smallskip
\item $G=\bZ_4$:\newline
    $S\subo=\espan{e_1,e_2}$, $S\subuno=\espan{u_1,u_2}$, $S_{\bar 2}=\espan{u_3,v_3}$, $S_{\bar 3}=\espan{v_1,v_2}$.

\smallskip
\item $G=\bZ_3$:\newline
 $S\subo=\espan{e_1,e_2},$ $S\subuno=\espan{u_1,u_2,u_3}$, $S_{\bar 2}=\espan{v_1,v_2,v_3}$.

\smallskip
\item $G=\bZ_2$:\newline
$S\subo=\espan{e_1,e_2,u_3,v_3}$ and $S\subuno=\espan{u_1,u_2,v_1,v_2}$. \hfill\qed
\end{enumerate}
\end{remark}

\medskip

A final presentation of $P_8(k)$ that will be used later on appears if the ground field $k$ is a field of characteristic $\ne 3$ containing a primitive cubic root $\omega$ of $1$. Consider then the automorphism $\tau_\omega$ of the split Cayley algebra $C(k)$ such that:
\begin{equation}\label{eq:tauomega}
\tau_\omega(e_i)=e_i\ (i=1,2),\quad \tau_\omega(u_i)=\omega^iu_i,\ \tau_\omega(v_i)=\omega^{-i}v_i\ (i=1,2,3).
\end{equation}

Then again the corresponding Petersson algebra is the pseudo-octonion algebra.

\begin{proposition}\label{pr:tauomega}
Let $k$ be a field of characteristic $\ne 3$ containing a primitive cubic root $\omega$ of $1$, then the Petersson algebra $\overline{C(k)}_{\tau_\omega}$ is isomorphic to the pseudo-octonion algebra.
\end{proposition}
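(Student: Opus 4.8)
The plan is to apply Corollary~\ref{co:bonito}, exactly as in the proof of Proposition~\ref{pr:taunst}: I would exhibit in the Petersson algebra $S=\overline{C(k)}_{\tau_\omega}$ two elements $x,y$ with $n(x)=n(y)=0$, $n(x,x*x)=1=n(y,y*y)$, $n(\alg{x},\alg{y})=0$ and $x*y=0$; then Corollary~\ref{co:bonito} gives $S\cong\calO_{1,1}$, and $\calO_{1,1}$ is isomorphic to the pseudo-octonion algebra $P_8(k)$ (as already recorded in the proof of Proposition~\ref{pr:taunst}). A preliminary remark to note is that $\omega^{-1}-\omega\neq 0$ in $k$: since $\omega^{-1}=\omega^2=-1-\omega$, we have $\omega^{-1}-\omega=-(2\omega+1)$, and $2\omega+1=0$ would force $0=\omega^2+\omega+1=\tfrac34$, impossible in characteristic $\neq 3$.

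Fixing a canonical basis $\{e_1,e_2,u_1,u_2,u_3,v_1,v_2,v_3\}$ of $C(k)$ as in Table~\ref{ta:splitCayley}, I would take $x=e_1$ and $y=u_1+u_2+\mu u_3$ with $\mu=\bigl(3(\omega^{-1}-\omega)\bigr)^{-1}$. Using $\overline{e_1}=e_2$, $\overline{u_i}=-u_i$ and $a*b=\tau_\omega(\bar a)\cdot\tau_\omega^2(\bar b)$ one computes $x*x=e_2\cdot e_2=e_2$, so $n(x)=n(e_1)=0$ and $n(x,x*x)=n(e_1,e_2)=1$; also $x*y=e_2\cdot\bigl(-\sum_i a_i\omega^{2i}u_i\bigr)=0$ since $e_2\cdot u_i=0$. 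The one computation that needs care is $y*y$: expanding $\tau_\omega(\bar y)\cdot\tau_\omega^2(\bar y)=\bigl(\sum_i a_i\omega^i u_i\bigr)\cdot\bigl(\sum_j a_j\omega^{2j}u_j\bigr)$ with the $u_i\cdot u_j$ entries (and signs) from Table~\ref{ta:splitCayley} and using $\omega^2=\omega^{-1}$, one gets $y*y=(\omega^{-1}-\omega)\bigl(a_2a_3\,v_1+a_1a_3\,v_2+a_1a_2\,v_3\bigr)$ with $(a_1,a_2,a_3)=(1,1,\mu)$; hence $n(y)=0$ and $n(y,y*y)=3(\omega^{-1}-\omega)a_1a_2a_3=3(\omega^{-1}-\omega)\mu=1$. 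Finally $\alg{x}=ke_1+ke_2$ is orthogonal to $U+V$, which contains $\alg{y}$, so $n(\alg{x},\alg{y})=0$.

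The main (and essentially only) obstacle is the bookkeeping in $y*y$: one must track the powers of $\omega$ produced by $\tau_\omega$ and $\tau_\omega^2$ against the signs of the products $u_i\cdot u_j$, and it is crucial that all three coordinates $a_1,a_2,a_3$ of $y$ be nonzero, since $n(y,y*y)$ is (up to the nonzero factor $3(\omega^{-1}-\omega)$) their product. Everything else is immediate. One may also note, if desired, that $S$ is genuinely an Okubo algebra and not para-Hurwitz: $\tau_\omega$ restricted to $U=\espan{u_1,u_2,u_3}$ has the three distinct cube roots of $1$ as eigenvalues, so its minimal polynomial there is $X^3-1$, whence $S$ is not para-Hurwitz by \cite[Theorem~3.5]{EP96} (this is in any case subsumed in the hypotheses of Corollary~\ref{co:bonito}). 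With $x,y$ as above, Corollary~\ref{co:bonito} yields $\overline{C(k)}_{\tau_\omega}\cong\calO_{1,1}\cong P_8(k)$.
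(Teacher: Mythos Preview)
Your proof is correct. The computations check out: with $x=e_1$ and $y=u_1+u_2+\mu u_3$ for $\mu=\bigl(3(\omega^{-1}-\omega)\bigr)^{-1}$, one indeed obtains $n(x,x*x)=n(y,y*y)=1$, $x*y=0$, and $n(\alg{x},\alg{y})=0$, so Corollary~\ref{co:bonito} applies (once you know $S$ is Okubo, which you correctly verify via the minimal polynomial of $\tau_\omega\vert_U$).

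The paper's own proof is simply a one-line appeal to \cite[Corollary~3.6]{EP96}, so there is nothing to compare structurally. Your argument is a genuine alternative: it is self-contained within the machinery developed in this paper, and it deliberately mirrors the char~$\ne 3$ case of the proof of Proposition~\ref{pr:taunst}. That uniformity is a virtue. One small comment: your parenthetical ``this is in any case subsumed in the hypotheses of Corollary~\ref{co:bonito}'' reads as slightly circular, since Corollary~\ref{co:bonito} \emph{assumes} the algebra is Okubo; your preceding sentence (minimal polynomial $X^3-1$ on $U$) is the actual justification, and you should let it stand on its own.
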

\begin{proof} This can be checked directly, and it is also a direct consequence of \cite[Corollary 3.6]{EP96}.
\end{proof}

\begin{remark}\label{re:gradingsP8ktauomega}
The automorphism $\tau_\omega$ preserves the $\bZ^2$-grading of the split Cayley algebra $C(k)$ in Theorem \ref{th:gradingsCayley}, and hence $P_8(k)$ inherits this grading. Besides, coarsening this grading we obtain the following group gradings $S=\oplus_{g\in G}S_g$ of the pseudo-octonion algebra $S=P_8(k)=\overline{C(k)}_{\tau_\omega}$ (characteristic $\ne 3$):
\begin{enumerate}
\item $G=\bZ^2$:\newline
$S_{(0,0)}=\espan{e_1,e_2}$, $S_{(1,0)}=\espan{u}_1$, $S_{(0,1)}=\espan{u_2}$, $S_{(1,1)}=\espan{v_3}$, $S_{(-1,0)}=\espan{v_1}$, $S_{(0,-1)}=\espan{v_2}$, $S_{(-1,-1)}=\espan{u_3}$.

\smallskip
\item $G=\bZ\times \bZ_2$:\newline
  $S_{(0,\bar 0)}=\espan{e_1,e_2}$, $S_{(0,\bar 1)}=\espan{u_3,v_3}$, $S_{(1,\bar 0)}=\espan{u_1}$, $S_{(1,\bar 1)}=\espan{v_2}$, $S_{(-1,\bar 0)}=\espan{v_1}$, $S_{(-1,\bar 1)}=\espan{u_2}$.

\smallskip
\item $G=\bZ$ ($3$-grading):\newline
 $S_0=\espan{e_1,e_2,u_3,v_3}$, $S_1=\espan{u_1,v_2}$, $S_{-1}=\espan{u_2,v_1}$. \hfill\qed
\end{enumerate}
\end{remark}

\bigskip

\section{Group gradings on symmetric composition algebras}\label{se:GradSymCompo}

Let $(S,*,n)$ be a symmetric composition algebra, and assume that it is graded over a group $G$: $S=\oplus_{g\in G}S_g$, with $S_g*S_h\subseteq S_{gh}$ for any $g,h\in G$. As always, it will be assumed that $G$ is generated by those $g\in G$ with $S_g\ne 0$. Because of equation \eqref{eq:xyx}, for any $a,b,c\in G$ and nonzero elements $x\in S_a$, $y\in S_b$ and $z\in S_c$,
\[
(x*y)*z+(z*y)*x=n(x,z)y,
\]
so $n(S_a,S_c)=0$ unless either $abc=b$ or $cba=b$. With $b=a$, it follows that $n(S_a,S_c)=0$ unless $c=a^{-1}$. Because of the nondegeneracy of $n$, we may take elements $x\in S_a$ and $z\in S_{a^{-1}}$ such that $n(x,z)\ne 0$, and then we conclude that either $aba^{-1}=b$ or $a^{-1}ba=b$. In any case $ab=ba$ for any $a,b\in G$ such that $S_a\ne 0\ne S_b$, and since these elements generate $G$, it follows that, as for Hurwitz algebras, the grading group $G$ is abelian.

Therefore, in what follows, additive notation will be used for the grading groups.

\smallskip

The situation for gradings on para-Hurwitz algebras of dimension $4$ or $8$ is easily reduced to the Hurwitz situation:

\begin{lemma}\label{le:gradingspH}
Let $(S,\bullet,n)$ be a para-Hurwitz algebra of dimension $\geq 4$, and assume that $S=\oplus_{g\in G}S_g$ is graded over a group $G$. Then its para-unit belongs to $S_0$.
\end{lemma}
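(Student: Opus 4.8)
The plan is to exploit the characterization, recalled just before Theorem~\ref{th:unified}, that in a para-Hurwitz algebra $(S,\bullet,n)$ of dimension $\geq 4$ the para-unit $e$ is the \emph{unique} idempotent spanning the commutative center of $S$. Recall that the commutative center (or commutative centre) $K(S)=\{x\in S: x\bullet y=y\bullet x\text{ for all }y\in S\}$ is a linear subspace of $S$ which is invariant under every automorphism of $S$, and more to the point, it is a \emph{graded} subspace of any $G$-grading: if $S=\oplus_{g}S_g$ then $K(S)=\oplus_g\bigl(K(S)\cap S_g\bigr)$, because the defining linear conditions $x\bullet y=y\bullet x$ are homogeneous with respect to the grading (for homogeneous $y\in S_h$, the components of $x\bullet y-y\bullet x$ in distinct degrees $g+h$ vanish separately). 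Since for $\dim S\ge 4$ the space $K(S)$ is one-dimensional, spanned by $e$, it must be concentrated in a single homogeneous component, say $e\in S_{g_0}$ for a unique $g_0\in G$.

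It remains to show $g_0=0$. Here I would use that $e$ is idempotent: $e\bullet e=e$, which forces $e\in S_{g_0}\bullet S_{g_0}\subseteq S_{2g_0}$, hence $e\in S_{g_0}\cap S_{2g_0}$; as $e\ne 0$ and the grading is a direct sum, $2g_0=g_0$, so $g_0=0$. Thus $e\in S_0$, as claimed. (Alternatively, one may invoke the para-unit identity $e\bullet x+\text{(symmetric terms)}$, but the idempotent argument is cleanest and uses nothing beyond $e\bullet e=e$.)

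The only point that requires a word of care—and the step I would flag as the main thing to get right—is the assertion that $K(S)$ is a graded subspace and is one-dimensional. The one-dimensionality for $\dim S\in\{4,8\}$ is exactly the statement recalled in the text (``the para-unit is unique, and it is the unique idempotent that spans the commutative center''), so it may be cited. For the grading of $K(S)$: write $x=\sum_g x_g$ with $x_g\in S_g$; for each homogeneous $y\in S_h$ one has $x\bullet y-y\bullet x=\sum_g(x_g\bullet y-y\bullet x_g)$ with the $g$-th summand lying in $S_{g+h}$, so $x\in K(S)$ iff $x_g\bullet y=y\bullet x_g$ for all homogeneous $y$ and all $g$, i.e. iff each $x_g\in K(S)$. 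Hence $K(S)=\oplus_g(K(S)\cap S_g)$, and being $1$-dimensional it lies in one $S_{g_0}$. Combining with the idempotency argument above gives $e\in S_0$, completing the proof.
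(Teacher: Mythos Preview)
Your proof is correct and follows essentially the same approach as the paper: both use that the commutative center $K(S)$ is a graded subspace, that it is one-dimensional spanned by $e$ when $\dim S\ge 4$, and then conclude from $e\bullet e=e$ that the degree of $e$ must be $0$. You have simply spelled out in more detail why $K(S)$ is graded, whereas the paper asserts this without elaboration.
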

\begin{proof}
The commutative center of $S$, that is, the subspace $K(S)=\{x\in S: x\bullet y=y\bullet x\ \text{for any $y\in S$}\}$ is a graded subspace of $S$. But since the dimension is $\geq 4$, this center has dimension $1$ and it is spanned by the para-unit $e$. Thus, $e$ is homogeneous, so $e\in S_g$ for some $g\in G$. As $e$ is an idempotent, it follows that $g+g=g$, so $g=0$.
\end{proof}

\begin{theorem}\label{th:gradingspH}
Let $(C,\cdot,n)$ be a Hurwitz algebra of dimension $\geq 4$, and let $(\bar C,\bullet,n)$ be its associated para-Hurwitz algebra (so that $x\bullet y=\bar x\cdot \bar y$ for any $x,y\in C$). Then the  group gradings of $C$ and $\bar C$ coincide.
\end{theorem}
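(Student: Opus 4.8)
The statement asserts that a direct-sum decomposition $C=\oplus_{g\in G}V_g$ indexed by a group is a group grading of $(C,\cdot,n)$ exactly when it is a group grading of $(\bar C,\bullet,n)$. Since $x\bullet y=\bar x\cdot\bar y$ and, dually, $x\cdot y=\bar x\bullet\bar y$ (using $\bar{\bar x}=x$), the whole theorem reduces to one claim: for any group grading of $C$ under either product, the standard conjugation $x\mapsto\bar x$ sends each homogeneous component into itself. Granting this, from $C_g\cdot C_h\subseteq C_{g+h}$ one gets $C_g\bullet C_h=\overline{C_g}\cdot\overline{C_h}\subseteq C_g\cdot C_h\subseteq C_{g+h}$, and symmetrically in the other direction, so the two collections of group gradings literally agree. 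I would prove the claim on the two sides separately, since the arguments differ a little.

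On the Hurwitz side, let $C=\oplus_gC_g$ be a group grading of $(C,\cdot,n)$. First, $1\in C_0$: writing $1=\sum_g a_g$ and multiplying a homogeneous element by $1$ on each side forces $a_0$ to be a two-sided identity, hence $a_0=1$. Then, for homogeneous $x\in C_g$ with $g\ne 0$, I would read off the degree-$g$ component of the Cayley--Hamilton identity \eqref{eq:CayleyHamilton}: since $x^{\cdot 2}\in C_{2g}$ and $n(x)1\in C_0$ contribute nothing in a degree $g\notin\{0,2g\}$, what remains is $n(x,1)x=0$, so $n(x,1)=0$. By \eqref{eq:conjugation} this gives $\bar x=-x\in C_g$ for $g\ne 0$, while $\bar x\in C_0$ for $x\in C_0$ because $1\in C_0$; hence conjugation is graded, and the displayed reduction shows the same decomposition is a group grading of $\bar C$.

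On the para-Hurwitz side, let $\bar C=\oplus_g S_g$ be a group grading of $(\bar C,\bullet,n)$; this is where the hypothesis $\dim C\ge 4$ is used. The para-unit of $\bar C$ is precisely the unit $1$ of $C$ (indeed $1\bullet x=\bar1\cdot\bar x=\bar x=n(1,x)1-x$, as $\bar1=1$), so Lemma \ref{le:gradingspH} puts $1\in S_0$. Now conjugation is graded almost for free: since $\bar1=1$, one has $\bar x=\bar x\cdot\bar1=x\bullet1\in S_g\bullet S_0\subseteq S_g$ for homogeneous $x\in S_g$. Consequently $S_g\cdot S_h=\overline{S_g}\bullet\overline{S_h}\subseteq S_g\bullet S_h\subseteq S_{g+h}$, so this decomposition is a group grading of $(C,\cdot,n)$, which completes the proof. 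There is no serious obstacle here; the two points that need attention are that $1$ genuinely lies in the identity component --- automatic on the Hurwitz side, and delivered by Lemma \ref{le:gradingspH} on the para-Hurwitz side --- and that the dimension restriction cannot be dropped: in dimension $2$ the split para-Hurwitz algebra carries a $\bZ_3$-grading with zero identity component, which is not a grading of the Hurwitz algebra $k\times k$.
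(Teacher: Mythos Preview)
Your proof is correct and follows essentially the same approach as the paper's: both arguments reduce to showing that conjugation preserves each homogeneous component, which in turn rests on $1$ lying in the identity component (automatic on the Hurwitz side, supplied by Lemma~\ref{le:gradingspH} on the para-Hurwitz side). The only minor difference is that the paper obtains $n(x,1)=0$ for homogeneous $x$ of nonzero degree by citing the general orthogonality $n(C_g,C_0)=0$ for $g\ne 0$ from \cite{GradingsOctonions}, whereas you extract it directly from the Cayley--Hamilton identity; your observation that $\bar x=x\bullet 1$ on the para-Hurwitz side is exactly what the paper's terse ``equally trivial'' is pointing at.
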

\begin{proof}
Let us denote by $e$ the unity of $C$, which is the para-unit of $\bar C$. It is clear that given a grading $C=\oplus_{g\in G}C_g$ of $C$, $e$ belongs to the subspace $C_0$, and hence given any $g\in G$ and $x\in C_g$, the element $\bar x=n(x,e)e-x$ belongs to $C_g$ too, as $n(S_g,S_0)=0$ unless $g=0$ (see \cite{GradingsOctonions}). Therefore, the grading is inherited by the para-Hurwitz algebra $\bar C$. The converse is equally trivial because of Lemma \ref{le:gradingspH}.
\end{proof}

Any four dimensional symmetric composition algebra is para-Hurwitz, while the eight dimensional symmetric composition algebras are either para-Hurwitz or Okubo. Hence we have to deal with the two dimensional symmetric composition algebras and the Okubo algebras.

\begin{theorem}\label{th:dimension2}
Let $(S,*,n)$ be a two dimensional symmetric composition algebra over a field $k$, and let $S=\oplus_{g\in G}S_g$ be a nontrivial group grading (that is $S\ne S_0$). Then either:
\begin{romanenumerate}
\item The characteristic of $k$ is $\ne 2$, $G=\bZ_2$, $S$ is a para-Hurwitz algebra with a para-unit $e\in S\subo$, $S\subo=ke$, and $S\subuno=S\subo^\perp$, or

\item The norm $n$ is isotropic and $G=\bZ_3$. In this case $S\subuno$ is spanned by an element $x$ with $n(x)=0\ne n(x,x*x)$, while $S_{\bar 2}$ is spanned by $x*x$.
\end{romanenumerate}
\end{theorem}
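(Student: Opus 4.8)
The first task is to reduce to the two cases by examining the neutral component $S_0$. Since $S\neq S_0$, there is some $0\neq g\in G$ with $S_g\neq 0$; fix a nonzero $x\in S_g$. By the associativity of the norm and the argument already carried out at the start of this section, $n(S_g,S_h)=0$ unless $h=-g$, so in particular $S_0\perp S_g$. A two dimensional composition algebra has nondegenerate norm, so $S_0$ cannot be all of $S$ (consistent with the hypothesis) and, more importantly, $S_0$ is a proper nondegenerate-or-zero graded subspace; I would split according to whether $\dim S_0=1$ or $\dim S_0=0$.

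\textbf{Case $\dim S_0=1$.} Then $S=S_0\oplus S_0^{\perp}$, and $S_0^{\perp}$ is one dimensional, spanned by our $x\in S_g$, with $S_g=S_0^{\perp}$ and $S_h=0$ for $h\neq 0,g$. From $S_g*S_g\subseteq S_{2g}$ and $2g\notin\{0,g\}$ unless $2g=0$, we get $x*x\in S_{2g}=0$ unless $2g=0$; but $x*x=0$ would force, via \eqref{eq:xyx} applied suitably together with nondegeneracy, a contradiction (the algebra $\alg{x}$ would be degenerate), so $2g=0$, i.e. $g$ has order $2$ and $G=\bZ_2$. Writing $S_0=ke$ for $0\neq e\in S_0$: since $e*e\in S_0=ke$, $e*e=\lambda e$; using \eqref{eq:xyx} with $y$ arbitrary one checks $n(e)=\lambda^2$ and, after rescaling $e$, that $e$ is an idempotent with $n(e)=1$; then $e*x=x*e$ must hold because $e*x,x*e\in S_g=kx$ and a short computation with the linearized identity \eqref{eq:xyx} pins down the common value, showing $e$ is a para-unit and $S$ is para-Hurwitz. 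Finally, the characteristic is $\neq 2$: in characteristic $2$ a two dimensional para-Hurwitz algebra's para-unit is not orthogonal to all of $S$ in the required way — more concretely, one invokes the remark at the very end of the Hurwitz discussion, where it is noted that a two dimensional Hurwitz algebra (hence its para-Hurwitz counterpart, by Theorem \ref{th:gradingspH}-type reasoning adapted to dimension $2$, or by direct inspection) admits a nontrivial $\bZ_2$-grading only in characteristic $\neq 2$. This gives alternative (i).

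\textbf{Case $\dim S_0=0$.} Then $S=S_g\oplus S_h\oplus\cdots$ is a sum of at least two nonzero homogeneous components with no neutral part. Pick $0\neq x\in S_g$ with $g\neq 0$. Now $x*x\in S_{2g}$, and since $n(x,x)\neq 0$ is impossible when $S_g\perp S_g$ unless... — more carefully: $n(S_g,S_g)=0$ unless $2g=0$. If $2g=0$ then $x*x\in S_0=0$, contradicting nondegeneracy of $\alg{x}$ as above; hence $2g\neq 0$, so $x*x$ spans a component $S_{2g}$ distinct from $S_g$. Then $x*x\in S_{2g}$ is nonzero and $n(x,x*x)$ need not vanish — indeed $(x*x)*x\in S_{3g}$; since $\dim S=2$ we must have $S=S_g\oplus S_{2g}$ with each component one dimensional, forcing $S_{3g}\in\{S_g,S_{2g}\}$, i.e. $3g=g$ or $3g=2g$; the latter gives $g=0$, excluded, so $3g=g$, i.e. $2g=0$ — contradiction! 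So I need to be more careful: the resolution is that $S=S_g\oplus S_{2g}$ forces $(x*x)*x\in S_{3g}$ to be nonzero (it equals $n(x)x$, and if $n(x)\neq 0$ this is a nonzero multiple of $x$), which lands in $S_g$, giving $3g=g$, hence $2g=0$, again a contradiction with $2g\neq 0$ unless $n(x)=0$. Therefore $n(x)=0$. With $n(x)=0$, nondegeneracy of $n$ and $n(x,x*x)$ being the only available pairing forces $n(x,x*x)\neq 0$, so $\alg{x}=kx\oplus k(x*x)$ is all of $S$, and $(x*x)*(x*x)=n(x,x*x)x$ by the identity used in the proof of Proposition \ref{pr:interesting}; this shows the component structure is exactly $S_g=kx$, $S_{2g}=k(x*x)$, $S_{3g}=S_0$ must reappear as the neutral element — but $S_0=0$! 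The only consistent option is $3g=0$ with $g$ of order $3$, giving $G=\bZ_3$, $S\subuno=kx$, $S_{\bar 2}=k(x*x)$, and since $n(x)=0$ the norm is isotropic. That is alternative (ii); note this case genuinely occurs only when the norm is isotropic, matching the statement.

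\textbf{Main obstacle.} The delicate point is the bookkeeping in the $\dim S_0=0$ case: one must juggle the constraints coming from $2g,3g$ being forced to coincide with $0,g,2g$ while simultaneously using the composition identity \eqref{eq:xyx} to rule out $x*x=0$ and to force $n(x)=0$, and then recognize that the ``missing'' neutral component is precisely what pins down $3g=0$. I would be careful to state cleanly why $x*x\neq 0$ (degeneracy of the restricted norm to $\alg{x}$ otherwise) and why $n(x)\ne 0$ is incompatible with $S_0=0$, since these two facts drive the whole case. The $\dim S_0=1$ case is comparatively routine once one knows the only two dimensional Hurwitz gradings, which is recorded at the end of Section~\ref{se:Compo}.
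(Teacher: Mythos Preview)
Your case split on $\dim S_0\in\{0,1\}$ and the treatment of the $\dim S_0=1$ case match the paper's proof (the paper gets $\charac k\ne 2$ directly from nondegeneracy of the polar form restricted to the one-dimensional $S_0$, and gets the para-unit by noting $(e*z)*e=z$ forces $\alpha^2=1$, with $\alpha=-1$ since $S$ is not unital --- this is the ``short computation'' you allude to). For $\dim S_0=0$ your element-level chase of $2g,3g,n(x)$ can be made to work, but the paper's argument is considerably cleaner: from $S*S=S$ and $S_a*S_b\subseteq S_{a+b}$ with $a+b\notin\{a,b\}$ (and $S_0=0$) one gets $S_a*S_b=S_b*S_a=0$ at once, so $S_a*S_a=S_b$ and $S_b*S_b=S_a$, whence $2a=b$, $2b=a$, $3a=0$ --- no need to argue separately that $x*x\ne 0$ or that $n(x)=0$. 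One imprecision in your write-up: ``degeneracy of $\alg{x}$'' is not itself the contradiction when $x*x=0$; the actual contradiction is that $x*x=0$ gives $n(x)=0$ via \eqref{eq:xyx}, hence $n(x,x)=0$, and combined with $n(S_g,S_h)=0$ for $h\ne -g$ this makes $x$ orthogonal to all of $S$, violating nondegeneracy of $n$.
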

\begin{proof}
Since the grading is not trivial, there are two elements $g,h\in G$ such that $S=S_g\oplus S_h$, with $\dim S_g=\dim S_h=1$. Assume one of these elements, say $g$, is $0$. Then, because of the nondegeneracy of $n$ and since $n(S_g,S_h)=0$, it follows that the characteristic of $k$ is $\ne 2$ and $S_0=kx$ for some $x\in S$ with $n(x)\ne 0$. But then $0\ne x*x\in kx$, so a scalar multiple of $x$ is an idempotent $e$. Thus, $S_0=ke$, with $e*e=e$, so that $n(e)=1$. Take $0\ne z\in S_h$. Then $e*z=z*e=\alpha z$ for some $0\ne \alpha\in k$, and since $z=n(e)z=(e*z)*e=\alpha^2 z$, it follows that $\alpha=-1$, as $S$ is not unital. Hence $e$ is a para-unit of $S$ and the situation in (i) is obtained.

Assume, on the contrary, that $S_0=0$. Then, since $S*S=S$, we get $S_a*S_a= S_b$, $S_b*S_b=S_a$, and $S_a*S_b=S_b*S_a=0$. Thus, $2a=b$, $2b=a$, so $3a=0$ and $G$ is, up to isomorphism, the cyclic group of order $3$. The nondegeneracy of the norm gives (ii).
\end{proof}

Note that in the situation of Theorem \ref{th:dimension2}.(ii), if $x$ is a nonzero homogeneous element in $S\subuno$, $n(x)=0\ne n(x,x*x)$ (since the norm is nondegenerate). The multiplication in $(S,*,n)$ is completely determined in terms of $\alpha=n(x,x*x)$, as $x*(x*x)=(x*x)*x=n(x)x=0$, and $(x*x)*(x*x)=\alpha x$ (see the proof of Proposition \ref{pr:interesting}). Given an arbitrary element $e=\mu x+\nu x*x$, $e*e=\nu^2\alpha x+\mu^2x*x$, so $e$ is a nonzero idempotent if and only if $\mu^2=\nu$ and $\nu^2\alpha=\mu$. Thus, there are nonzero idempotents (and, as a consequence, the algebra is para-Hurwtiz) if and only if there exists an element $\epsilon \in k$ such that $\epsilon^3=\alpha$.

\medskip

We are left with the gradings on Okubo algebras.

\begin{theorem}\label{th:gradingsOkubo1}
Let $(S,*,n)$ be an Okubo algebra over a field $k$, and let $S=\oplus_{g\in G}S_g$ be a nontrivial group grading (that is $S\ne S_0$). Then either:
\begin{romanenumerate}
\item $0\ne S_0$ is a para-Hurwitz subalgebra, or
\item $S_0$ is a two-dimensional subalgebra without nonzero idempotents, $G=\bZ_3$, and there exist scalars $0\ne\alpha,\beta\in k$ and an isomorphism $\varphi:S\rightarrow \calO_{\alpha,\beta}$ such that
    \[
    \begin{split}
    \varphi(S\subo)&=\alg{x_{1,0}}=\espan{x_{1,0},x_{-1,0}},\\
    \varphi(S\subuno)&=\espan{x_{0,1},x_{1,1},x_{-1,1}},\\
    \varphi(S_{\bar 2})&=\espan{x_{0,-1},x_{1,-1},x_{-1,-1}},
    \end{split}
    \]
    that is, up to isomorphism, the grading is the standard $\bZ_3$-grading of the Okubo algebra $\calO_{\alpha,\beta}$ (see \eqref{eq:Z3standard} in Remark \ref{re:Z3naturalgrading}),
    or
\item $S_0=0$, $G=\bZ_3^2$ and there exist scalars $0\ne \alpha,\beta\in k$ and an isomorphism $\varphi:S\rightarrow \calO_{\alpha,\beta}$ of graded algebras, with the standard $\bZ_3^2$-grading in $\calO_{\alpha,\beta}$ (see  \eqref{eq:Z32standard} in Remark \ref{re:Z3naturalgrading}).
\end{romanenumerate}
\end{theorem}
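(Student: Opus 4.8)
The proof proceeds by analyzing the homogeneous component $S_0$ at the identity of the grading group, which is forced to be a subalgebra closed under the norm's restriction. I would first record the basic structural consequence of \eqref{eq:xyx}, exactly as in the discussion preceding Theorem \ref{th:gradingspH}: for homogeneous $x\in S_a$, $z\in S_c$ one has $n(S_a,S_c)=0$ unless $c=-a$, so the norm pairs $S_a$ with $S_{-a}$ nondegenerately and each $S_a$ is nonisotropic when $2a=0$ forces nothing, etc. The three cases in the statement are distinguished precisely by $\dim S_0\in\{\ge 4,\ (\text{something}),\ 2,\ 0\}$; more carefully, either $S_0\ne 0$, and then I split according to whether $S_0$ contains a nonzero idempotent, or $S_0=0$.

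\textbf{Case $S_0\ne 0$ with a nonzero idempotent (case (i)).} If $e\in S_0$ is a nonzero idempotent, then by \cite[Theorems 2.5 and 3.5]{EP96} the maps $a\mapsto n(a,e)e-e*a$ and $a\cdot b=(e*a)*(b*e)$ turn $S$ into a Cayley algebra with unit $e$ and $\tau$ an order-$3$ automorphism, with $*$ recovered as a Petersson product; moreover $e$ is the para-unit of the para-Hurwitz algebra structure on the $\tau$-fixed quaternion subalgebra. The point is that $e\in S_0$ and $n(S_0,S_a)=0$ for $a\ne 0$ imply that conjugation $a\mapsto \bar a$ and the automorphism $\tau$ (which is built from left/right multiplication by $e$) preserve the grading; hence $(S,\cdot,n)$ becomes a $G$-graded Cayley algebra and $(S,*,n)=\bar C_\tau$ inherits it. Restricting to $S_0$: it is a graded subalgebra containing $e$; I claim $S_0$ is itself a symmetric composition subalgebra with a nonzero idempotent, hence (being of dimension $2$, $4$, or $8$ and containing a para-unit) para-Hurwitz by the classification recalled in Section \ref{se:SymCompo}. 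Actually the cleanest route is: $S_0$ is a nonisotropic $*$-subalgebra (closed under $*$ since $0+0=0$; nondegenerate since $n(S_0,S_a)=0$ for $a\ne0$), containing the idempotent $e$ with $n(e)=1$; any such symmetric composition algebra with a nonzero idempotent of dimension $\le 8$ is para-Hurwitz — in dimension $8$ this is Theorem \ref{th:isotropic} combined with the fact that $\calO_{1,\beta}$ has no nonzero idempotent in its identity component unless $\beta$ is a cube and then it degenerates to para-Hurwitz; in lower dimensions it is the classification. This yields (i).

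\textbf{Case $S_0\ne 0$ without a nonzero idempotent, and case $S_0=0$ (cases (ii) and (iii)).} Here the key is to manufacture the distinguished isotropic elements $x,y$ of Corollary \ref{co:bonito}. Using the norm pairing, $\dim S_0=\dim S_{-0}$ and the remaining components pair off; a dimension count over the (abelian) group $G$ with $\dim S=8$ leaves, after excluding case (i), only $\dim S_0\in\{0,2\}$ as genuinely possible — one must rule out $\dim S_0\in\{4,6,8\}$ by showing those force an idempotent in $S_0$ (e.g. any nonisotropic $*$-subalgebra of dimension $\ge 4$ is para-Hurwitz, hence has its para-unit as a homogeneous idempotent in $S_0$). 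If $\dim S_0=2$, then $S_0$ is a two-dimensional symmetric composition subalgebra with no nonzero idempotent; by Theorem \ref{th:dimension2} and the remark following it, its norm is isotropic and $S_0=\alg{x}=kx\oplus k(x*x)$ for some $x$ with $n(x)=0\ne n(x,x*x)=:\alpha$, with $\alpha\notin k^3$. Now invoke Proposition \ref{pr:interesting} (valid in characteristic $\ne 3$, and by Remark \ref{re:interesting} also in characteristic $3$ precisely because $\alpha\notin k^3$) to get $y\in \alg{x}^\perp$ with $n(y)=0\ne n(y,y*y)=:\beta$, $x*y=0$. Corollary \ref{co:bonito} then gives the isomorphism $\varphi:S\to\calO_{\alpha,\beta}$ sending $x\mapsto -x_{1,0}$, $y\mapsto -x_{0,1}$; it remains to check it is an isomorphism of graded algebras for the $\bZ_3$-grading, i.e.\ that $\varphi(S_0)=\alg{x_{1,0}}$ and the other two components land correctly. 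This follows because $S_0=\alg{x}$ by construction, and $S$ as a $\bZ_3$-module is generated over $S_0$ by $S_{\bar 1}$ with $y\in S_{\bar 1}$ (possibly after replacing the generator of $G\cong\bZ_3$), so the basis \eqref{eq:basisxy} is graded exactly as the standard $\bZ_3$-grading; the cases $y\in S_{\bar 1}$ vs $y\in S_{\bar 2}$ are interchanged by the symmetry $x_{i,j}\leftrightarrow x_{i,-j}$, which is realized by an automorphism. This is (ii). Finally, if $S_0=0$, then every nonzero component is at most... in fact the grading is a refinement of a $\bZ_3$-grading of the above type (its coarsening obtained by killing one $\bZ_3$ factor must have nonzero identity component or be again of type (ii)/(iii)); pushing through, one finds two homogeneous isotropic generators $x\in S_a$, $y\in S_b$ with $a,b$ of order $3$, independent, $x*y=0$, $n(x,x*x)=\alpha$, $n(y,y*y)=\beta$ nonzero, and $n(\alg{x},\alg{y})=0$ automatic; Corollary \ref{co:bonito} again gives $\varphi:S\to\calO_{\alpha,\beta}$, and the eight basis vectors in \eqref{eq:basisxy} have distinct degrees in $\bZ_3^2=\langle a,b\rangle$ matching the standard $\bZ_3^2$-grading, so $\varphi$ is an isomorphism of graded algebras. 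This is (iii).

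\textbf{Main obstacle.} The routine part is the Peirce/Petersson bookkeeping; the genuinely delicate point is the dichotomy in the $S_0\ne 0$ case — proving that ``$S_0$ contains no nonzero idempotent'' forces $\dim S_0=2$ with $n|_{S_0}$ isotropic and $n(x,x*x)\notin k^3$, so that Proposition \ref{pr:interesting} applies even in characteristic $3$ (this is exactly where Remark \ref{re:interesting} is needed and where the characteristic-$3$ subtleties of \cite{Eld97} enter). Equivalently, one must exclude the a priori possibility of a $4$- or $6$-dimensional idempotent-free $S_0$, which I would do by noting that a nonisotropic symmetric composition subalgebra of dimension $\ge 4$ is necessarily para-Hurwitz (by the classification, Okubo algebras having dimension $8$), hence carries a para-unit that is a homogeneous idempotent in $S_0$, contradiction; and the $8$-dimensional case is case (i) itself. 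Handling the passage from $S_0=0$ to a $\bZ_3$-coarsening cleanly, so that one does not have to redo the whole analysis, is the other bit requiring care.
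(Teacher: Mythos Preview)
Your overall architecture (split on $\dim S_0$ and feed everything into Corollary~\ref{co:bonito}) matches the paper's, but several steps in cases (ii) and (iii) are genuine gaps rather than routine bookkeeping.

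\textbf{Case (ii).} Three problems:
\begin{itemize}
\item You never prove $G\cong\bZ_3$. You write ``possibly after replacing the generator of $G\cong\bZ_3$'' as if this were already known, but nothing you have said excludes, say, $G=\bZ$ or $G=\bZ_4$ with a two-dimensional $S_0$. The paper handles this by first showing $G$ is cyclic (any proper cyclic subalgebra through $S_0$ would be para-quaternion and force an idempotent into $S_0$), then extending scalars to $\hat k$, using the para-unit of $\hat S_0$ to turn $\hat S$ into a graded Cayley algebra, and reading off $G=\bZ_3$ from Theorem~\ref{th:gradingsCayley}.
\item Your claim that ``by Theorem~\ref{th:dimension2} and the remark following it, its norm is isotropic'' is not what those results say: Theorem~\ref{th:dimension2} classifies \emph{gradings} on two-dimensional symmetric compositions, and the subsequent remark discusses idempotents only in the already-isotropic case. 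The paper proves $n\vert_{S_0}$ isotropic directly from the grading, by exhibiting a nonzero product $a*b\in S_{\bar 0}$ with $a\in S_{\bar 1}$, $b\in S_{\bar 2}$ and $n(a*b)=n(a)n(b)=0$.
\item Most seriously, Proposition~\ref{pr:interesting} does not produce a \emph{homogeneous} $y$; it only gives $y\in\alg{x}^\perp$. So even granting $G=\bZ_3$, you cannot assert $y\in S_{\bar 1}$. The paper avoids this by constructing the auxiliary Cayley algebra with unit built from $p=x+\alpha^{-1}x*x\in S_{\bar 0}$, identifying its Peirce pieces $U,V$ with $S_{\bar 1},S_{\bar 2}$ (in some order), and then taking $y$ to be an arbitrary nonzero element of $S_{\bar 1}$, verifying via the Peirce structure (and the $\alpha\notin k^3$ argument from the proof of Proposition~\ref{pr:interesting}) that this $y$ meets the hypotheses of Corollary~\ref{co:bonito}.
\end{itemize}

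\textbf{Case (iii).} Your argument is circular: ``the grading is a refinement of a $\bZ_3$-grading of the above type'' and ``killing one $\bZ_3$ factor'' presuppose $G=\bZ_3^2$, which is exactly what has to be shown. The paper proceeds differently: first, any $g$ with $S_g\ne 0$ has order $\ge 3$ (order $2$ forces $n(S_g,S)=0$); second, the cyclic subalgebra $\oplus_{n}S_{ng}$ has dimension $2$ or $8$ (dimension $4$ would again put a para-unit into $S_0=0$); hence either every nonzero $S_g$ is one-dimensional with $g$ of order $3$, giving $G=\bZ_3^2$, or $G$ is cyclic generated by some $g$. The cyclic case is then eliminated by a short direct computation (forcing $3g=0$, $\dim S_g=4$, and deriving a contradiction from an element $z\in S_{-g}$ with $n(x,z)=0$). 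Only after $G=\bZ_3^2$ is established does one pick $x\in S_{(\bar 1,\bar 0)}$, $y\in S_{(\bar 0,\bar 1)}$ and apply Corollary~\ref{co:bonito}.

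In short: the key missing idea is that in both non-para-Hurwitz cases one must \emph{first} pin down $G$ (via the Cayley-algebra grading classification in case (ii), and via the order/cyclic-subalgebra analysis in case (iii)) and only then choose $x,y$ as homogeneous elements, rather than producing $y$ abstractly and hoping it respects the grading.
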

\begin{proof}
Since it is assumed that the grading is not trivial, $S_0$ is either $0$ or a proper composition subalgebra of $S$. Assume first that $S_0$ is not zero. Then either $S_0$ is a para-Hurwitz subalgebra, so the situation in (i) holds, or $S_0$ is a two dimensional symmetric composition algebra without nonzero idempotents.

In the latter case, let $0\ne g\in G$ with $S_g\ne 0$, and consider the subalgebra $\oplus_{n\in\bZ}S_{ng}$. If this subalgebra is not the whole $S$, then it has dimension $4$, and hence it is para-Hurwitz. But then its para-unit belongs to the zero component $S_0$ (Lemma \ref{le:gradingspH}), a contradiction. Hence $S=\oplus_{n\in\bZ}S_{ng}$ and $G$ is cyclic and generated by any element $g\in G$ with $S_g\ne 0$. Extend scalars up to an algebraic closure $\hat k$ of $k$. Then $\hat S=\hat k\otimes_k S$ is a symmetric composition algebra over $\hat k$ graded over $G$, and $\hat S_0$ is a two dimensional symmetric composition algebra over the algebraically closed field $\hat k$, so that $\hat S_0$ is a para-Hurwitz algebra. It does no harm to denote the norm in $\hat S$ also by $n$. Let $e$ be a para-unit of $\hat S_0$. Consider the Cayley algebra $(\hat S,\cdot,n)$ defined by means of
\[
x\cdot y=(e*x)*(y*e),
\]
whose unity is $e$. Since $e\in \hat S_0$, this Cayley algebra inherits the grading on $\hat S$. Thus we have a grading on a Cayley algebra over a cyclic group $G$, with two dimensional zero part and with the property that $G$ is generated by any nonzero $g\in G$ with $\hat S_g\ne 0$. A careful look at Theorem \ref{th:gradingsCayley} shows that the only possibility for $G$ is $\bZ_3$. Hence $S=S\subo\oplus S\subuno\oplus S_{\bar 2}$ and $\dim S\subuno=\dim S_{\bar 2}=3$. Now,
\[
n(S\subuno*S_{\bar 2},S\subo)=n(S\subuno,S_{\bar 2}*S\subo)=n(S\subuno,S_{\bar 2})\ne 0
\]
as the norm is nondegenerate, so there exist elements $a\in S\subuno$ and $b\in S_{\bar 2}$ such that $0\ne a*b\in S\subo$. But $n(S\subuno)=0=n(S_{\bar 2})$ because of \eqref{eq:xyx}, so $n(a*b)=0$. We conclude that the restriction of the norm to $S\subo$ is isotropic. Let $0\ne x\in S\subo$ be an element with $n(x)=0$. Since $S\subo$ is a form of a two dimensional para-Hurwitz algebra, it contains no element whose square is $0$. Since $S\subo$ has no nonzero idempotents, it follows that $\{x,x*x\}$ is a basis of $S\subo$. Let $\alpha=n(x,x*x)$, which is nonzero as the restriction of $n$ to $S\subo$ is nondegenerate. The argument in the paragraph previous to the Theorem shows that actually $\alpha\not\in k^3$.

Consider, as in the proof of Proposition \ref{pr:interesting}, the element $p=x+\alpha^{-1}x*x$ and the Cayley algebra defined on $S$ with multiplication \[
a\cdot b=l_p(a)*r_p(b),
\]
whose unity is the element $q=p*p=\alpha^{-1}x+x*x$. Also, $e_1=\alpha^{-1}x$ and $e_2=x*x$ are idempotents of $C$ whose sum is the unity. Besides $(S,\cdot,n)$ inherits the grading from $S$, as $p\in S\subo$, and hence (Theorem \ref{th:gradingsCayley}) the $\bZ_3$-grading is given by the Peirce decomposition relative to $e_1$ and $e_2$. (Note that $e_1$ and $e_2$ are the only nonzero isotropic idempotents in $(S\subo,\cdot)$.) Thus it follows that either
\[
S\subuno=\{z\in S: e_1\cdot z=z\cdot e_2=z\},\quad
S_{\bar 2}=\{ z\in S: e_2\cdot z=z\cdot e_1=z\},
\]
or
\[
S\subuno =\{ z\in S: e_2\cdot z=z\cdot e_1=z\},\quad
S_{\bar 2}=\{z\in S: e_1\cdot z=z\cdot e_2=z\}.
\]
Assume, for instance, that the first situation happens and take $0\ne y\in S\subuno$. The arguments in the proof of Proposition \ref{pr:interesting} show that the elements $x$ and $y$ satisfy the hypotheses of Corollary \ref{co:bonito} and we obtain the result in (ii), with $\alpha=n(x,x*x)$ and $\beta=n(y,y*y)$. If it is the second situation above the one that happens, one gets $y*x=0$ instead of $x*y=0$, but one can change $x$ by $x*x$, which then satisfies $(x*x)*y=-(y*x)*x=0$ and get the same conclusion.

\smallskip

Finally, assume that $S_0=0$. Take any element $g\in G$ such that $S_g\ne 0$. If the order of $g$ is $2$, then $S_g*S_g\subseteq S_0=0$, so $n(S_g)=0$ and $n(S_g,S_h)=0$ for any $h\ne g^{-1}=g$, which is a contradiction with the nondegeneracy of the norm. Hence the order of any $g\in G$ with $S_g\ne 0$ is at least $3$. Take an element $g\in G$ with $S_g\ne 0$ and consider the subalgebra $\oplus_{n\in\bZ}S_{ng}$. This is a composition subalgebra of $S$, and it cannot be a four dimensional subalgebra, as this would imply this subalgebra to be para-Hurwitz, with a unique para-unit which necessarily belongs to $S_0$ according to Lemma \ref{le:gradingspH}. Thus either this subalgebra is the whole $S$, or it has dimension $2$. In the latter case, and because of Theorem \ref{th:dimension2}, the order of $g$ is exactly $3$ and $\dim S_g=1$. Therefore, by dimension count, either $G=\bZ_3^2$ with $\dim S_g=1$ for any $g\ne 0$, or $G$ is cyclic and generated by an element $g\in G$ with $S_g\ne 0$.

In the first case ($G=\bZ_3^2$), take $0\ne x\in S_{(\bar 1,\bar 0)}$, then $n(x)=0$ (as $(x*x)*x=n(x)x\in S_0=0$). Since $S_{(\bar 1,\bar 0)}\oplus S_{(\bar 2,\bar 0)}=S_{(\bar 0,\bar 0)}\oplus S_{(\bar 1,\bar 0)}\oplus S_{(\bar 2,\bar 0)}$ is a two dimensional composition subalgebra of $S$, we can conclude as before that $x*x$ is a nonzero element in $S_{(\bar 2,\bar 0)}$, and that $\alg{x}=\espan{x,x*x}=S_{(\bar 1,\bar 0)}\oplus S_{(\bar 2,\bar 0)}$. Similarly, if $0\ne y\in S_{(\bar 0,\bar 1)}$, $n(y)=0$, and $\alg{y}=\espan{y,y*y}=S_{(\bar 0,\bar 1)}\oplus S_{(\bar 0,\bar 2)}$. Theorem \ref{th:bonito} shows that either $x*y=0$ or $y*x=0$. In the first case Corollary \ref{co:bonito} shows that with $\alpha=n(x,x*x)$ and $\beta=n(y,y*y)$, there is an isomorphism $\varphi:S\rightarrow \calO_{\alpha,\beta}$ such that $\varphi(x)=-x_{1,0}$ and $\varphi(y)=-x_{0,1}$, and this gives the required graded isomorphism. Otherwise just permute the roles of $x$ and $y$.

On the other hand, if $G$ is cyclic generated by the element $g$ with $S_g\ne 0$, take an arbitrary nonzero element  $x\in S_g$, and then take an element $y\in S_{-g}$ with $n(x,y)=1$ (the norm is nondegenerate). Since $n(x)=0=n(y)$ because $S_0=0$, $n(x+y)=n(x,y)=1$, so that the left multiplication by $x+y$: $l_{x+y}$, is an isometry. Then $x*x=(x+y)*x\ne 0$ (note that $y*x\in S_0=0$). Therefore the square of any nonzero element in $S_g$ is $\ne 0$. In the same vein, $0\ne (x*x)*(x*x)=-((x*x)*x)*x+n(x,x*x)x=n(x,x*x)x$, so that $n(x,x*x)\ne 0$,  $x*x\in S_{-g}$ and $2g=-g$, so $3g=0$ and $G=\bZ_3$. But then $\dim S_g=4=\dim S_{-g}$. Take $z\in S_{-g}$ with $n(x,z)=0$. Then $(x*x)*z+(z*x)*x=n(x,z)x=0$, and $z*x\in S_0=0$. Thus $(x+x*x)*z=0$, although $n(x+x*x)=n(x,x*x)\ne 0$, so $l_{x+x*x}$ is an isometry, and in particular a bijection. Thus a contradiction is reached, and this finishes the proof.
\end{proof}

\smallskip

Therefore, in order to determine the gradings of the Okubo algebras, we must consider the gradings where $S_0$ is a para-Hurwitz algebra. Let $e$ be a para-unit of $S_0$. Moreover, if for a subgroup $H$ of $G$, the subalgebra $\oplus_{g\in H}S_g$ has dimension $4$, and hence it is a para-quaternion algebra, then its unique para-unit is in $S_0$ (Lemma \ref{le:gradingspH}), so $e$ can be taken to be this para-unit.

Thus, let $(S,*,n)$ be an Okubo algebra, with a grading $S=\oplus_{g\in G}S_g$ such that $S_0$ is para-Hurwitz with para-unit $e$ as above. Consider then the Cayley algebra $(S,\cdot,n)$ with
\[
x\cdot y=(e*x)*(x*e),
\]
whose unity is $e$. Besides, the linear map $\tau:S\rightarrow S$ such that $\tau(x)=n(e,x)e-x*e=\bar x*e$ (where $\bar x=n(e,x)e-x$) is an automorphism of both $(S,*,n)$ and $(S,\cdot,n)$ with $\tau^3=1$, and such that the multiplication $*$ is given by the equation:
\[
x*y=\tau(\bar x)\cdot \tau^2(\bar y)
\]
(see \cite[Theorem 2.5]{EP96}).

As $(S,*,n)$ is not para-Hurwitz, $\tau\ne 1$ holds, so the order of $\tau$ is exactly $3$. Besides, $e$ is a para-unit of $S_0$, so the restriction of $\tau$ to $S_0$ is the identity map. Also, since $e\in S_0$, $\tau(S_g)=S_g$ for any $g\in G$, and $\bar S_g=S_g$ too. Therefore, $\oplus_{g\in G}S_g$ is a grading of the Cayley algebra $(S,\cdot ,n)$ too, where all the homogeneous spaces are invariant under the automorphism $\tau$.

Conversely, given any grading $S=\oplus_{g\in G}S_g$ of the Cayley algebra $(S,\cdot,n)$ such that $\tau(S_g)=S_g$ for any $g\in G$ and with $e\in S_0$ ($e$ is the unity of $(S,\cdot,n)$), this is a grading of $(S,*,n)$ too. Moreover, the universal grading group does not depend on consider this grading as a grading of $(S,\cdot,n)$ or of $(S,*,n)$, since $S_{g_1}*S_{g_2}=\tau(\bar S_{g_1})\cdot\tau^2(\bar S_{g_2})=S_{g_1}\cdot S_{g_2}$ for any $g_1,g_2\in G$.

\smallskip

Hence we must look at the possible gradings of a Cayley algebra $(C,\cdot,n)$ such that there is an automorphism $\tau$ of order $3$ which leaves invariant all the homogeneous spaces, whose restriction to the homogeneous subspace of degree $0$ is the identity, and such that the Petersson algebra $\bar C_\tau$ is an Okubo algebra. In this way all the possible gradings of Okubo algebras with $S_0$ being para-Hurwitz will be obtained. Let us do this by reviewing all the possibilities in Theorem \ref{th:gradingsCayley}. The notations in the paragraphs above will be kept throughout the discussion.

\medskip

\noindent\textbf{1.} $G=\bZ_2$:

If the characteristic of the ground field $k$ is $\ne 3$, the proof of Proposition \ref{pr:idempotentsno3} shows that $S\subo=\{x\in S:\tau(x)=x\}$, and $S\subuno$ is the orthogonal complement to $S\subo$. Hence the grading is given by Remark \ref{re:Z2gradingsOkubo}.

On the other hand, if the characteristic is $3$, with $Q=S\subo$, $S=Q\oplus Q\cdot u$, and $\tau(u)=w\cdot u$ for some $w\in Q$. Since $\tau^3=1\ne \tau$, $w^{\cdot 3}=1\ne w$, so $(w-1)^{\cdot 3}=0\ne w-1$. But $(Q,\cdot)$ is a quaternion algebra, hence of degree $2$, so that $(w-1)^{\cdot 2}=1$. In particular, $Q$ has a nonzero nilpotent element, and thus it is isomorphic to $\Mat_2(k)$. Hence the restriction of the norm to $Q$ represents any scalar, and this shows that we can take the element $u$ orthogonal to $Q$, of norm $1$. Moreover, without loss of generality, we can assume that $Q=\Mat_2(k)$ and that $w=\left(\begin{smallmatrix} 1&1\\ 0&1\end{smallmatrix}\right)$. Thus $(S,*,n)$ is uniquely determined as $\bar C_\tau$, where $C=CD(\Mat_2(k),-1)$ (the split Cayley algebra), and $\tau$ given by $\tau(x)=x$ for any $x\in Q$, and $\tau(u)=w\cdot u$. Hence, by uniqueness, this is the situation given in the $\bZ_2$-grading in Remark \ref{re:gradingsP8ktaunst}.

\bigskip

\noindent\textbf{2.} $G=\bZ_2^2$:

Since $S_{(\bar 0,\bar 0)}\oplus S_{(\bar 1,\bar 0)}$ has dimension $4$, as mentioned above, the para-unit $e$ of $S_{(\bar 0,\bar 0)}$ used to define $\tau$ will be taken to be the para-unit of this para-quaternion algebra. Then, if the characteristic of $k$ is $\ne 3$, we are in the situation of the $\bZ_2^2$-grading in Remark \ref{re:Z2gradingsOkubo}.

On the other hand, if the characteristic of $k$ is $3$, $(S_0,\cdot,n)$ is a two dimensional Hurwitz algebra, and hence a commutative associative separable algebra. Besides, the homogeneous components are orthogonal subspaces. For any of the other homogeneous components, take a non isotropic element $u$. Then $\tau(u)=w\cdot u$ for some $w\in S_0$. As in the previous case, the element $w$ satisfies $(w-1)^{\cdot 2}=0$, but $(S_0,\cdot)$ has no nilpotent elements. Hence $w=1$. But this forces $\tau$ to be the identity, a contradiction. Therefore this situation does not appear in characteristic $3$.

\bigskip

\noindent\textbf{3.} $G=\bZ_2^3$ and the characteristic of $k$ is $\ne 2$:

In this case, $S_0$ is the one-dimensional Hurwitz algebra (the ground field) and the homogeneous components are orthogonal subspaces. As before, take a non isotropic element $u$ in any of the other homogeneous components, then $\tau(u)=w\cdot u$ for some $w\in S_0$. Since $u\cdot u\in S_0$, $w\cdot w=1$, and since $\tau^3=1$, $w^{\cdot 3}=1$. It follows that $w=1$, but this gives $\tau=1$, a contradiction. Thus, this situation is not possible.

\bigskip

\noindent\textbf{4.} $G=\bZ_3$.

Here there is a canonical basis of the Cayley algebra $(S,\cdot,n)$ such that $S\subo=\espan{e_1,e_2}$, $S\subuno=\espan{u_1,u_2,u_3}$ and $S_{\bar 2}=\espan{v_1,v_2,v_3}$ (see Theorem \ref{th:gradingsCayley}). If the characteristic of $k$ is $\ne 3$, and because of \cite[Theorem 3.6]{EP96}, the subalgebra fixed by $\tau$: $\{x\in S:\tau(x)=x\}$, is four dimensional and contains $S\subo$. Hence $1$ is an eigenvalue of multiplicity $1$ of the restriction of $\tau$ to the Peirce spaces $U=\espan{u_1,u_2,u_3}$ and $V=\espan{v_1,v_2,v_3}$. Since $\tau$, as an automorphism of the Cayley algebra $(S,\cdot,n)$, is an isometry of order $3$, and the subspaces $U$ and $V$ are isotropic and paired by the polar form of the norm, the minimal polynomial of the restriction of $\tau$ to both $U$ and $V$ is $X^3-1$. Thus, there is an element $u\in U$ such that $\{u,\tau(u),\tau^2(u)\}$ is a basis of $U$. Take the isotropic elements $x=e_1$ and $y=u$. Then:
\[
\begin{split}
x*x&=\bar e_1\cdot\bar e_1=e_2\cdot e_2=e_2,\\
y*y&=\tau(u)\cdot\tau^2(u)\in V\setminus\{0\},\\
x*y&=-\bar e_1\cdot\tau^2(u)=-e_2\cdot\tau^2(u)=0,
\end{split}
\]
so that $\alg{x}$ and $\alg{y}$ are orthogonal two dimensional composition subalgebra, and hence, due to Corollary \ref{co:bonito}, $(S,*,n)$ is isomorphic to the Okubo algebra $\calO_{1,\beta}$ with $\beta=n(y,y*y)=n(u,\tau(u)\cdot\tau^2(u))$ (note that $n(x,x*x)=n(e_1,e_2)=1$), through an isomorphism that takes $x$ to $-x_{1,0}$ and $y$ to $-x_{0,1}$. We conclude that, up to isomorphism, the grading on $(S,*,n)$ is the standard $\bZ_3$-grading in $\calO_{1,\beta}$ (see Remark \ref{re:Z3naturalgrading}).

On the other hand, if the characteristic of $k$ is $3$, we merely have that the minimal polynomial of the restriction of $\tau$ to either $U$ or $V$ divides $X^3-1=(X-1)^3$. Since $\tau\ne 1$, either this minimal polynomial is $X^3-1$, and then the argument for characteristic $\ne 3$ works equally well here, or this minimal polynomial is $(X-1)^2$. In the latter case, a canonical basis $\{u_1,u_2,u_3\}$ of $U$ (and the corresponding dual basis of $V$) can be taken so that $\tau$ becomes the automorphism $\tau_{nst}$ in \eqref{eq:taunst}. Thus our Okubo algebra is the Petersson algebra $\overline{C(k)}_{\tau_{nst}}$, which is isomorphic to the pseudo-octonion algebra (Proposition \ref{pr:taunst}), and our grading is the $\bZ_3$-grading in Remark \ref{re:gradingsP8ktaunst}. This grading is not equivalent to any standard $\bZ_3$-grading on an Okubo algebra $\calO_{\alpha,\beta}$. The reason is that the only nonzero idempotent in our $S\subo$ is $e=e_1+e_2$ which satisfies that the dimension of the subspace $\{x\in S: e*x=x*e\}$ is $6$ (it coincides with the subspace of elements fixed by $\tau_{nst}$). However, in $(\calO_{\alpha,\beta})\subo$ (with the standard $\bZ_3$-grading), either there is no nonzero idempotent, or $\alpha\in k^3$ and the only nonzero idempotent is $-\alpha^{-\frac{1}{3}}x_{1,0}-\alpha^{\frac{1}{3}}x_{-1,0}$, and this idempotent satisfies that the subspace of elements that commute with it has dimension $4$.

\bigskip

\noindent\textbf{5.} $G=\bZ_4$.

In this case, $S\subo\oplus S_{\bar 2}$ is a four dimensional subalgebra, and hence the para-unit $e$ of $S\subo$ can be taken to be the para-unit of this subalgebra. Thus, our Okubo algebra $(S,*,n)$ is a Petersson algebra with the automorphism $\tau$ fixing elementwise the subspace $S\subo\oplus S_{\bar 2}$. According to Theorem \ref{th:gradingsCayley}, the Cayley algebra involved is split and has a canonical basis such that $S\subo=\espan{e_1,e_2}$, $S\subuno=\espan{u_1,u_2}$, $S_{\bar 2}=\espan{u_3,v_3}$ and $S_{\bar 3}=\espan{v_1,v_2}$. If the characteristic of $k$ is $\ne 3$, the subalgebra of elements fixed by $\tau$ has dimension $4$ and hence coincides with $S\subo\oplus S_{\bar 2}$, and the minimal polynomial of the restriction of $\tau$ to $S\subuno$ or its dual subspace $S_{\bar 3}$ is $X^2+X+1$. On the other hand, if the characteristic of $k$ is $3$, the minimal polynomial of these restrictions divides $X^3-1=(X-1)^3$ and it is not $X-1$. By dimension count, this minimal polynomial is $(X-1)^2=X^2+X+1$. So no matter which characteristic we are dealing with, the minimal polynomial of the restriction of $\tau$ to $S\subuno$ and to $S_{\bar 3}$ is $X^2+X+1$. The canonical basis can be adjusted to assume that $\tau$ coincides with the automorphism $\tau_{nst}$ in \eqref{eq:taunst}, and hence our Okubo algebra is the pseudo-octonion algebra (Proposition \ref{pr:taunst}), and our grading is the $\bZ_4$-grading in Remark \ref{re:gradingsP8ktaunst}.

\bigskip

\noindent\textbf{6.} $G=\bZ$ ($3$-grading).

According to Theorem \ref{th:gradingsCayley}, our Cayley algebra is split and there is a canonical basis such that $S_0=\espan{e_1,e_2,u_3,v_3}$, $S_1=\espan{u_1,v_2}$ and $S_{-1}=\espan{u_2,v_1}$. The order $3$ automorphism $\tau$ fixes elementwise $S_0$, and hence $\tau(u_1)=\tau(e_1\cdot u_1)=e_1\cdot\tau(u_1)\in e_1\cdot S_1=\espan{u_1}$. Thus there is a scalar $\alpha\in k$ such that $\tau(u_1)=\alpha u_1$. Since $\tau^3=1$, $\alpha^3=1$. If the characteristic of $k$ is $3$ or it is $\ne 3$ but $k$ does not contain the primitive cubic roots of $1$, the scalar $\alpha$ equals $1$. In the same vein, $\tau$ fixes any of the elements $u_2$, $v_1$ and $v_2$ and is thus the identity, a contradiction.

Therefore, this possibility may happen only if $k$ is a field of characteristic not $3$ containing the three cubic roots of $1$. In this case, the subspace of elements fixed by $\tau$ is $S_0$ (\cite[Theorem 3.6]{EP96}) so there is a primitive cubic root $\omega$ of $1$ in $k$ such that $\tau(u_1)=\omega u_1$, $\tau(v_2)=\tau(u_3\cdot u_1)=\omega v_2$, $\tau(u_2)=\omega^2 u_2$ (as $v_3=\tau(v_3)=\tau(u_1\cdot u_2)$) and $\tau(v_1)=\omega^2 v_1$. That is, our automorphism $\tau$ is the automorphism $\tau_\omega$ in \eqref{eq:tauomega}, our Okubo algebra is the pseudo-octonion algebra (Proposition \ref{pr:tauomega}), and our grading is the $3$-grading that appears in Remark \ref{re:gradingsP8ktauomega}.

\bigskip

\noindent\textbf{7.} $G=\bZ$ ($5$-grading).

Here the subalgebra $S=S_0\oplus S_2\oplus S_{-2}$ has dimension $4$, and hence the para-unit $e$ can be taken to be the unique para-unit of this subalgebra. Again the Cayley algebra here is the split Cayley algebra with a canonical basis (Theorem \ref{th:gradingsCayley}) such that $S_0=\espan{e_1,e_2}$, $S_1=\espan{u_1,u_2}$, $S_2=\espan{v_3}$, $S_{-1}=\espan{v_1,v_2}$ and $S_{-2}=\espan{u_3}$. The situation is completely similar to the case of a $\bZ_4$-grading. (Actually, from any $5$-grading one obtains a $\bZ_4$-grading by reducing modulo $4$.) The canonical basis can be adjusted to assume that $\tau$ coincides with the automorphism $\tau_{nst}$ in \eqref{eq:taunst}, and hence our Okubo algebra is the pseudo-octonion algebra (Proposition \ref{pr:taunst}), and our grading is the $5$-grading in Remark \ref{re:gradingsP8ktaunst}.

\bigskip

\noindent\textbf{8.} $G=\bZ^2$.

In this case the subalgebra $S_{(0,0)}\oplus S_{(1,1)}\oplus S_{(-1,-1)}$ has dimension four, and hence the para-unit $e$ can be taken to be the unique para-unit of this subalgebra. Accordingly, our automorphism $\tau$ fixes elementwise this subalgebra. Our Cayley algebra is split and there is a canonical basis such that (Theorem \ref{th:gradingsCayley}) $S_{(0,0)}=\espan{e_1,e_2}$, $S_{(1,0)}=\espan{u_1}$, $S_{(0,1)}=\espan{u_2}$, $S_{(1,1)}=\espan{v_3}$, $S_{(-1,0)}=\espan{v_1}$, $S_{(0,-1)}=\espan{v_2}$ and $S_{(-1,-1)}=\espan{u_3}$. As in the case of $3$-grading, this possibility can only occur if $k$ is a field of characteristic $\ne 3$ containing the primitive cubic roots of $1$. A primitive cubic root $\omega$ of $1$ can be taken so that $\tau$ is the automorphism $\tau_\omega$ in \eqref{eq:tauomega}, our Okubo algebra is the pseudo-octonion algebra (Proposition \ref{pr:tauomega}), and our grading is the $\bZ^2$-grading that appears in Remark \ref{re:gradingsP8ktauomega}.

\bigskip

\noindent\textbf{9.} $G=\bZ\times\bZ_2$.

This case is completely similar to the previous one. It may occur only if $k$ is a field of characteristic $\ne 3$ containing the primitive cubic roots of $1$. A primitive cubic root $\omega$ of $1$ can then be taken so that $\tau$ is the automorphism $\tau_\omega$ in \eqref{eq:tauomega}, our Okubo algebra is the pseudo-octonion algebra (Proposition \ref{pr:tauomega}), and our grading is the $\bZ\times\bZ_2$-grading that appears in Remark \ref{re:gradingsP8ktauomega}.

\bigskip

Our next result summarizes all the work done in this Section, and provides a complete description of all the nontrivial gradings of the symmetric composition algebras.

\begin{theorem}\label{th:gradingssymmetric}
Let $S=\oplus_{g\in G}S_g$ be a nontrivial grading of the symmetric composition algebra $(S,*,n)$ over a field $k$, where $G$ is the universal grading group. Then either:
\begin{itemize}
\item $(S,*,n)$ is the para-Hurwitz algebra attached to the Hurwitz algebra $(S,\cdot,n)$ (so that $x*y=\bar x\cdot\bar y$ for any $x,y\in S$), and the grading is given by a grading of the Hurwitz algebra $(S,\cdot,n)$. (See Theorem \ref{th:gradingsCayley}, Corollary \ref{co:gradingsquaternion} and the subsequent paragraph.)

\smallskip

\item The dimension of $S$ is $2$, the norm $n$ is isotropic, $G=\bZ_3$ and there is an element $x\in S$ with $n(x)=0\ne n(x,x*x)$ such that $S\subo=0$, $S\subuno$ is spanned by $x$ and $S_{\bar 2}$ is spanned by $x*x$. (See Theorem \ref{th:dimension2}.)

\smallskip

\item $(S,*,n)$ is an Okubo algebra and either:
\smallskip
\begin{enumerate}
\item $G=\bZ_2$:\newline
  If the characteristic of $k$ is $\ne 3$, there is a quaternion algebra $Q$ containing the two dimensional subalgebra $K=k1+kw$, with $w^2+w+1=0$, and there is a nonzero scalar $\alpha\in k$, such that $(S,*,n)$ is, up to isomorphism, the Petersson algebra $\bar C_\tau$, where $C$ is the Cayley-Dickson algebra $C=CD(Q,\alpha)=Q\oplus Qu$, $u^2=\alpha$, and $\tau$ is the order $3$ automorphism of $Q$ determined by $\tau(q)=q$ for any $q\in Q$ and $\tau(u)=wu$. Moreover, $S\subo=Q$ and $S\subuno=Qu$. \newline
  If the characteristic of $k$ is $3$, then $(S,*,n)$ is isomorphic, as a graded algebra, to the pseudo-octonion algebra $P_8(k)$, viewed as the Petersson algebra $\overline{C(k)}_{\tau_{nst}}$ as in Proposition \ref{pr:taunst}, with the grading given by
  \[
  S\subo=\espan{e_1,e_2,u_3,v_3},\quad S\subuno=\espan{u_1,u_2,v_1,v_2}.
  \]

\smallskip
\item $G=\bZ_2^2$ (characteristic $\ne 3$):\newline
   There is a quaternion algebra $Q$ containing the two dimensional subalgebra $K=k1+kw$, with $w^2+w+1=0$, and there is a nonzero scalar $\alpha\in k$, such that $(S,*,n)$ is, up to isomorphism, the Petersson algebra $\bar C_\tau$, where $C$ is the Cayley-Dickson algebra $C=CD(Q,\alpha)=Q\oplus Qu$, $u^2=\alpha$, and $\tau$ is the order $3$ automorphism of $Q$ determined by $\tau(q)=q$ for any $q\in Q$ and $\tau(u)=wu$. Moreover, $S_{(\bar 0,\bar 0)}=K$, $S_{(\bar 1,\bar 0)}=K^\perp\cap Q$, $S_{(\bar 0,\bar 1)}=Ku$ and $S_{(\bar 1,\bar 1)}=(K^\perp\cap Q)u$.

\smallskip
\item $G=\bZ_3$ (standard):\newline
  There are nonzero scalars $\alpha,\beta\in k$ such that $(S,*,n)$ is isomorphic, as a graded algebra, to the Okubo algebra $\calO_{\alpha,\beta}$ with its standard $\bZ_3$-grading in \eqref{eq:Z3standard}.

\smallskip
\item $G=\bZ_3$ (nonstandard, characteristic $3$):\newline
  The characteristic of $k$ is $3$, and $(S,*,n)$ is isomorphic, as a graded algebra, to the pseudo-octonion algebra $P_8(k)$, viewed as the Petersson algebra $\overline{C(k)}_{\tau_{nst}}$, as in Proposition \ref{pr:taunst}, with the grading given by:
  \[
  \null\qquad\qquad S\subo=\espan{e_1,e_2},\ S\subuno=\espan{u_1,u_2,u_3},\ S_{\bar 2}=\espan{v_1,v_2,v_3}.
  \]

\smallskip
\item $G=\bZ_3^2$:\newline
 There are nonzero scalars $\alpha,\beta\in k$ such that $(S,*,n)$ is isomorphic, as a graded algebra, to the Okubo algebra $\calO_{\alpha,\beta}$ with its standard $\bZ_3^2$-grading in \eqref{eq:Z32standard}.

\smallskip
\item $G=\bZ_4$:\newline
   $(S,*,n)$ is isomorphic, as a graded algebra, to the pseudo-octonion algebra $P_8(k)$, viewed as the Petersson algebra $\overline{C(k)}_{\tau_{nst}}$, as in Proposition \ref{pr:taunst}, with the grading given by:
  \[
  \begin{aligned}
  S\subo&=\espan{e_1,e_2},&\quad S\subuno&=\espan{u_1,u_2},\\
  S_{\bar 2}&=\espan{u_3,v_3},& S_{\bar 3}&=\espan{v_1,v_2}.
  \end{aligned}
  \]

\smallskip
\item $G=\bZ$ ($3$-grading, characteristic $\ne 3$):\newline
   The ground field $k$ contains the primitive cubic roots of $1$ and $(S,*,n)$ is isomorphic, as a graded algebra, to the pseudo-octonion algebra $P_8(k)$, viewed as the Petersson algebra $\overline{C(k)}_{\tau_\omega}$, as in Proposition \ref{pr:tauomega}, with the grading given by:
   \[
   \null\qquad\qquad S_0=\espan{e_1,e_2,u_3,v_3},\ S_1=\espan{u_1,v_2},\ S_{-1}=\espan{u_2,v_1}.
   \]

\smallskip
\item $G=\bZ$ ($5$-grading):\newline
  $(S,*,n)$ is isomorphic, as a graded algebra, to the pseudo-octonion algebra $P_8(k)$, viewed as the Petersson algebra $\overline{C(k)}_{\tau_{nst}}$, as in Proposition \ref{pr:taunst}, with the grading given by:
  \[
  \begin{aligned}
  \null\qquad\qquad S_0&=\espan{e_1,e_2},&\ S_1&=\espan{u_1,u_2},&\ S_ 2&=\espan{v_3},\\ &&S_{-1}&=\espan{v_1,v_2},&\ S_{-2}&=\espan{u_3}.
  \end{aligned}
  \]

\smallskip
\item $G=\bZ^2$ (characteristic $\ne 3$): \newline
  The ground field $k$ contains the primitive cubic roots of $1$ and $(S,*,n)$ is isomorphic, as a graded algebra, to the pseudo-octonion algebra $P_8(k)$, viewed as the Petersson algebra $\overline{C(k)}_{\tau_\omega}$, as in Proposition \ref{pr:tauomega}, with the grading given by:
   \[
   \begin{aligned}
    \null\qquad&& S_{(0,0)}&=\espan{e_1,e_2},&&\\
     S_{(1,0)}&=\espan{u_1},& S_{(0,1)}&=\espan{u_2},&\  S_{(1,1)}&=\espan{v_3},\\
     S_{(-1,0)}&=\espan{v_1},& S_{(0,-1)}&=\espan{v_2},& S_{(-1,-1)}&=\espan{u_3}.
    \end{aligned}
    \]

\smallskip
\item $G=\bZ\times\bZ_2$ (characteristic $\ne 3$):\newline
  The ground field $k$ contains the primitive cubic roots of $1$ and $(S,*,n)$ is isomorphic, as a graded algebra, to the pseudo-octonion algebra $P_8(k)$, viewed as the Petersson algebra $\overline{C(k)}_{\tau_\omega}$, as in Proposition \ref{pr:tauomega}, with the grading given by:
   \[
   \begin{aligned}
    S_{(0,\bar 0)}&=\espan{e_1,e_2},&\quad S_{(0,\bar 1)}&=\espan{u_3,v_3},\\
     S_{(1,\bar 0)}&=\espan{u_1},& S_{(1,\bar 1)}&=\espan{v_2},\\ \null\qquad S_{(-1,\bar 0)}&=\espan{v_1},&
     S_{(-1,\bar 1)}&=\espan{u_2}.
    \end{aligned}
   \]
\end{enumerate}
\end{itemize}
\end{theorem}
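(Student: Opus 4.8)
The plan is to \emph{collate} the results obtained throughout this section, organizing the argument along the trichotomy furnished by the classification of symmetric composition algebras recalled before Theorem \ref{th:unified}: apart from two-dimensional forms, $(S,*,n)$ is either a para-Hurwitz algebra or an Okubo algebra.

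First I would dispose of the para-Hurwitz and low-dimensional cases. If $(S,*,n)=\bar C$ for a Hurwitz algebra $(C,\cdot,n)$ of dimension $\geq 4$, then by Theorem \ref{th:gradingspH} its group gradings coincide with those of $(C,\cdot,n)$, which are listed in Theorem \ref{th:gradingsCayley} (dimension $8$) and Corollary \ref{co:gradingsquaternion} (dimension $4$); this, together with the $\bZ_2$-grading of a two-dimensional Hurwitz algebra in characteristic $\ne 2$ recorded at the end of Section \ref{se:Compo}, yields the first bullet. For $\dim S=2$, Theorem \ref{th:dimension2} leaves exactly two possibilities: its case (i) is again a para-Hurwitz grading over $\bZ_2$ (subsumed in the first bullet), while case (ii) is precisely the second bullet, with universal grading group $\bZ_3$ since the two nonzero components are one-dimensional and $x*x$ spanning $S_{\bar 2}$ forces $3$ times the generator to be trivial.

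Next I would treat the Okubo case using Theorem \ref{th:gradingsOkubo1}. If $S_0=0$ we land in item (e) (the standard $\bZ_3^2$-grading of some $\calO_{\alpha,\beta}$) and if $S_0$ is two-dimensional without nonzero idempotents we land in item (c) (the standard $\bZ_3$-grading); in both cases all homogeneous components are one-dimensional and distinct, and the relations forced by Table \ref{ta:Oalphabeta} generate exactly $\bZ_3^2$, respectively $\bZ_3$, so these are the universal grading groups. The remaining subcase is $S_0$ a para-Hurwitz subalgebra with para-unit $e$; here I would invoke the reduction carried out after Theorem \ref{th:gradingsOkubo1}: taking $e$ as in that discussion (the unique para-unit of the relevant four-dimensional homogeneous subalgebra whenever one exists, by Lemma \ref{le:gradingspH}), the grading becomes a grading of the Cayley algebra $(S,\cdot,n)$ all of whose homogeneous components are invariant under the defining order-$3$ automorphism $\tau$, with $\tau$ the identity on $S_0$ and with unchanged universal grading group. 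One then runs through the nine possibilities of Theorem \ref{th:gradingsCayley} — this is exactly the case analysis \textbf{1}--\textbf{9} performed above: the cases $G=\bZ_2^2$ and $G=\bZ_2^3$ in characteristic $3$, and $G=\bZ_2^3$ in any characteristic, force $\tau=1$ and are discarded, while the surviving cases produce items (a), (b), (d), (f), (g), (h), (i), (j) under the stated characteristic and root-of-unity hypotheses, the pseudo-octonion identifications coming from Propositions \ref{pr:taunst} and \ref{pr:tauomega} and Corollary \ref{co:bonito}.

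Finally I would check that the list is irredundant. No two items are equivalent gradings, which is immediate from the distinct universal grading groups except for the two $\bZ_3$-items (c) and (d); these are separated — as already observed in case \textbf{4} — by the characteristic and, intrinsically, by the dimension of the subspace $\{x\in S: e*x=x*e\}$ associated to the unique nonzero idempotent $e$ of $S_0$ when it exists, which is $6$ for (d) and $4$ for (c) (or there is no such idempotent in case (c)). The only genuine work is the per-case analysis \textbf{1}--\textbf{9}, and within it the characteristic-$3$ bookkeeping — deciding whether $\tau$ restricted to a Peirce space has minimal polynomial $X^3-1$ or $(X-1)^2$, and then recognizing the resulting Petersson algebra via Propositions \ref{pr:taunst} and \ref{pr:tauomega} — is the subtlest point; this has already been dealt with, so what remains for the theorem itself is purely the assembly just described.
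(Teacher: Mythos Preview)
Your proposal is correct and follows essentially the same route as the paper: the theorem is explicitly a summary of the preceding work in the section, and you have accurately identified the three ingredients (Theorem \ref{th:gradingspH} and Theorem \ref{th:dimension2} for the para-Hurwitz and two-dimensional cases, Theorem \ref{th:gradingsOkubo1} for the Okubo trichotomy, and the case analysis \textbf{1}--\textbf{9} for the remaining ``$S_0$ para-Hurwitz'' subcase). Your added remark on irredundancy, distinguishing items (c) and (d) via the centralizer dimension of the idempotent in $S_0$, is exactly the observation the paper makes within case \textbf{4}.
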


\medskip

\begin{remark}\label{re:coarsenings}
Over an algebraically closed field $k$ of characteristic $\ne 3$, all the gradings of the pseudo-octonion algebra in Theorem \ref{th:gradingssymmetric} are, up to isomorphism, coarsenings of either the $\bZ^2$-grading or the $\bZ_3^2$-grading.

Actually, both the $3$-grading and the $\bZ\times\bZ_2$-grading  are clearly coarsenings of the $\bZ^2$-grading, while the $\bZ_4$-grading is a coarsening of the $5$-grading, the standard $\bZ_3$-grading is a coarsening of the $\bZ_3^2$-grading, the $\bZ_2$-grading is a coarsening of the $\bZ_2^2$-grading, and the nonstandard $\bZ_3$-grading does not appear in characteristic $\ne 3$. Hence, it is enough to check that both the $5$-grading and the $\bZ_2$-grading are  coarsenings of the $\bZ^2$-grading.

Take a canonical basis of the split Cayley algebra as in Table \ref{ta:splitCayley} and consider the pseudo-octonion algebra $P_8(k)$ in two different ways: as the Petersson algebra $\overline{C(k)}_{\tau_{nst}}$ and as the Petersson algebra $\overline{C(k)}_{\tau_\omega}$ (Propositions \ref{pr:taunst} and \ref{pr:tauomega}). Then the linear map $\phi:\overline{C(k)}_{\tau_\omega}\rightarrow \overline{C(k)}_{\tau_{nst}}$ such that
\[
\begin{aligned}
\phi(e_1)&=e_1,\quad\phi(e_2)=e_2,\quad&\phi(u_3)&=u_3,\quad\phi(v_3)=v_3,\\
\phi(u_1)&=\omega u_1-\omega^2 u_2, & \phi(u_2)&=\frac{1}{\omega-\omega^2}(\omega^2u_1-\omega u_2)\\
 \phi(v_1)&=\frac{1}{\omega^2-\omega}(\omega v_1+\omega^2 v_2), & \phi(v_2)&=\omega^2v_1+\omega v_2,
\end{aligned}
\]
is an isomorphism. Moreover, under this isomorphism, the $5$-grading in Theorem \ref{th:gradingssymmetric} becomes the $5$-grading in $\overline{C(k)}_{\tau_\omega}$ such that $S_0=\espan{e_1,e_2}$, $S_1=\espan{u_1,u_2}$, $S_ 2=\espan{v_3}$, $S_{-1}=\espan{v_1,v_2}$, and $S_{-2}=\espan{u_3}$, which is obviously a coarsening of the $\bZ^2$-grading given in Theorem \ref{th:gradingssymmetric}.

On the other hand, since the ground field $k$ is assumed here to be algebraically closed of characteristic $\ne 3$, the two dimensional subalgebra $K$ that appears in the $\bZ_2^2$-grading is isomorphic to the cartesian product $k\times k$, and the quaternion algebra $Q$ to $\Mat_2(k)$. Hence, a canonical basis can be taken on $C=CD(Q,\alpha)$ such that $K=\espan{e_1,e_2}$, $w=\omega e_1+\omega^2 e_2$, $Q=\espan{e_1,e_2,u_3,v_3}$ and $u=u_1+\alpha v_1$. Then $\tau(u)=wu=(\omega e_1+\omega^2e_2)(u_1+\alpha v_1)=\omega u_1+\omega^2\alpha v_1$, so $\tau(u_1)=\tau(e_1 u)=e_1\tau(u)=\omega u_1$ and $\tau(v_1)=\omega^2v_1$. It follows that $\tau$ is the automorphism $\tau_\omega$ in \eqref{eq:tauomega}, and the Okubo algebra $S=\bar C_\tau$ is just the algebra $S=\overline{C(k)}_{\tau_\omega}$, with the $\bZ_2$-grading given by $S_{(\bar 0,\bar 0)}=K=\espan{e_1,e_2}$, $S_{(\bar 1,\bar 0)}=\espan{u_3,v_3}$, $S_{(\bar 0,\bar 1)}=Ku=\espan{u_1,v_1}$ and $S_{(\bar 1,\bar 1)}=\espan{u_2,v_2}$, and this is clearly a coarsening of the $\bZ^2$-grading. \hfill\qed
\end{remark}

\medskip

\begin{remark}\label{re:Bahturinetal}
The pseudo-octonion algebra was introduced by S.~Okubo in \cite{Okubo78} as follows. Let $k$ be a field of characteristic $\ne 2,3$ containing a root $\mu=\frac{1}{6}(3+\sqrt{-3})$ of the equation $3X(1-X)=1$ (which is equivalent to containing the primitive cubic roots of $1$). On the space $\frsl(3,k)$ of the trace zero $3\times 3$ matrices over $k$ define a new multiplication by:
\[
x*y=\mu xy+(1-\mu)yx-\frac{1}{3}\tr(xy)1,
\]
and a norm $n$ given by $n(x)=\frac{1}{6}\tr(x^2)$. Then $(S,*,n)$ is a composition algebra, which is isomorphic to $P_8(k)$.

It follows that any grading by an abelian group of the algebra of matrices $\Mat_3(k)$ is inherited by $P_8(k)$, since the Cayley-Hamilton equation forces the trace to behave nicely with respect to the grading: given any two homogeneous elements $x,y\in \Mat_3(k)$, $\tr(xy)$ is $0$ unless there is an element $g$ in the grading group such that $x\in\Mat_3(k)_g$ and $y\in\Mat_3(k)_{-g}$. Conversely, any grading of $P_8(k)$ gives a grading of the Lie algebra $P_8(k)^-$ (with bracket $[x,y]^*=x*y-y*x=(2\mu -1)[x,y]$), which is isomorphic to $\frsl_3(k)$ (therefore, the gradings on $\frsl_3(k)$ and on $P_8(k)$ coincide). It also gives a grading on $P_8(k)^+$ (with multiplication $x*y+y*x=xy+yx-\frac{2}{3}\tr(xy)1$), and thus it gives a grading on $\Mat_3(k)$.

Over an algebraically closed field of characteristic $0$, there are just two fine gradings by abelian groups of $\Mat_3(k)$ (see \cite[Theorem 6]{Bahturinetal}): an ``elementary'' $\bZ^2$-grading and a $\bZ_3^2$-grading. These two gradings induce the two fine gradings of $P_8(k)$ considered in Remark \ref{re:coarsenings}. \hfill\qed
\end{remark}

\bigskip

\section{Gradings on exceptional simple Lie algebras}\label{se:GradLie}

The gradings obtained in the previous sections allow us to get some interesting gradings on exceptional simple Lie algebras. First, the construction of the exceptional simple Lie algebras (other than $G_2$) from symmetric composition algebras obtained in \cite{Eld04Ibero} will be reviewed.

The characteristic of the ground field will be considered to be $\ne 2,3$ throughout this section, unless otherwise stated.

\medskip

\subsection{The triality Lie algebra}

Let $(S,*,n)$ be any symmetric composition algebra and consider the corresponding orthogonal Lie algebra:
\[
\fro(S,n)=\{ d\in \End_k(S): n\bigl(d(x),y\bigr)+n\bigl(x,d(y)\bigr)=0\ \forall x,y\in S\},
\]
and the subalgebra of $\fro(S,n)^3$ (with componentwise multiplication) defined by:
\[
\tri(S,*,n)=\{(d_0,d_1,d_2)\in\fro(S,n)^3: d_0(x*y)=d_1(x)*y+x*d_2(y)\ \forall x,y\in S\}.
\]
If the context is clear, we will just write $\tri(S)$.

It turns out (see \cite{Eld04Ibero}) that the map:
\begin{equation}\label{eq:theta}
\begin{split}
\theta:\tri(S,*,n)&\rightarrow \tri(S,*,n)\\
 (d_0,d_1,d_2)&\mapsto (d_2,d_0,d_1)
\end{split}
\end{equation}
is an automorphism of order $3$. The subalgebra of fixed elements is (isomorphic to) the Lie algebra of derivations of $(S,*)$, which is a simple Lie algebra of type $G_2$ if $(S,*,n)$ is a para-Cayley algebra, or of type $A_2$ if $(S,*,n)$ is an Okubo algebra.

Then we have the following result (see \cite[Lemma 2.1]{Eld04Ibero} or \cite[Chapter VIII]{KMRT}):

\begin{theorem}\label{th:triality}
Let $(S,*,n)$ be an eight dimensional symmetric composition algebra over a field of characteristic $\ne 2$. Then:
\begin{romanenumerate}
\item \textbf{(Principle of Local Triality)}\quad The projection $\pi_0:\tri(S,*,n)\rightarrow \fro(S,n):$ $(d_0,d_1,d_2)\mapsto d_0$, is an isomorphism of Lie algebras.

\item For any $x,y\in S$, consider the triple:
\begin{equation}\label{eq:txy}
t_{x,y}=\Big(\sigma_{x,y},\frac{1}{2}n(x,y)id-r_xl_y,\frac{1}{2}q(x,y)id-l_xr_y\Bigr),
\end{equation}
where $\sigma_{x,y}:z\mapsto n(x,z)y-n(y,z)x$. Then
\[
\tri(S,*,n)=t_{S,S}\, (=\espan{t_{x,y}: x,y\in S}),
\]
and
\[
[t_{a,b},t_{x,y}]=t_{\sigma_{a,b}(x),y}+t_{x,\sigma_{a,b}(y)}
\]
for any $a,b,x,y\in S$.\hfill\qed
\end{romanenumerate}
\end{theorem}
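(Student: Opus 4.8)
The plan is to establish both parts at once, using that the maps $\sigma_{x,y}$ already span the orthogonal Lie algebra. First I would verify directly that $t_{x,y}\in\tri(S,*,n)$ for all $x,y\in S$. That each of its three components lies in $\fro(S,n)$ comes down, for the last two, to the operator form $l_ar_a=r_al_a=n(a)\,\mathrm{id}$ of \eqref{eq:xyx} together with the polarizations of \eqref{eq:xyx} obtained by replacing $x$ by $x+z$: these give $r_xl_y+r_yl_x=n(x,y)\,\mathrm{id}=l_xr_y+l_yr_x$, while the associativity of the polar form of $n$ yields that the adjoint of $l_y$ with respect to $n$ is $r_y$; hence $\tfrac12 n(x,y)\,\mathrm{id}-r_xl_y$ and $\tfrac12 n(x,y)\,\mathrm{id}-l_xr_y$ are skew. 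The triality identity $d_0(u*v)=d_1(u)*v+u*d_2(v)$ for $(d_0,d_1,d_2)=t_{x,y}$ is then a straightforward, if somewhat lengthy, computation from those same linearized Moufang-type identities. In particular $t_{S,S}\subseteq\tri(S,*,n)$ and $\pi_0(t_{x,y})=\sigma_{x,y}$.

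Next I would show that $\pi_0$ is injective. It is obviously a homomorphism of Lie algebras, since the bracket of $\tri(S,*,n)$ is componentwise. Suppose $(0,d_1,d_2)\in\tri(S,*,n)$, so $d_1(x)*y+x*d_2(y)=0$ for all $x,y$. Applying the right multiplication $r_x$ and using \eqref{eq:xyx} and its polarization gives $n(x)\,d_2(y)=(x*y)*d_1(x)$ for all $x,y$; polarizing this in $x$ and exploiting the skew-symmetry of $d_1$ and $d_2$ (together with the analogous relation $n(y)\,d_1(x)=d_2(y)*(x*y)$ coming from left multiplication), one is forced to $d_1=d_2=0$, whence $\ker\pi_0=0$. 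Here the $\bZ/3$-symmetry of the defining relations under $\theta$ from \eqref{eq:theta} — which puts $(0,d_1,d_2)$, $(d_1,d_2,0)$ and $(d_2,0,d_1)$ simultaneously in $\tri(S,*,n)$ — helps keep the bookkeeping symmetric.

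Since $\charac k\ne2$, the orthogonal Lie algebra of a nondegenerate quadratic space is spanned by the operators $\sigma_{x,y}$; combined with $\pi_0(t_{x,y})=\sigma_{x,y}$ this shows $\pi_0$ is surjective, so with injectivity it is an isomorphism of Lie algebras — this is part (i). Moreover $t_{S,S}$ and $\tri(S,*,n)$ then have the same image, namely all of $\fro(S,n)$, under the injective map $\pi_0$, whence $t_{S,S}=\tri(S,*,n)$. Finally, to obtain the bracket formula it suffices, because $\pi_0$ is an isomorphism, to apply $\pi_0$ to both sides: the identity becomes $[\sigma_{a,b},\sigma_{x,y}]=\sigma_{\sigma_{a,b}(x),y}+\sigma_{x,\sigma_{a,b}(y)}$ in $\fro(S,n)$, which is the classical statement that $\sigma$ intertwines the adjoint action of $\fro(S,n)$ with its natural action on $S$ (equivalently, that $\sigma\colon\wedge^2S\to\fro(S,n)$ is $\fro(S,n)$-equivariant), and this is immediate from the definition of $\sigma$.

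\textbf{Main obstacle.} The real work is concentrated in the first two steps. I expect the injectivity of $\pi_0$ to be the delicate point: the first natural manipulations of $d_1(x)*y+x*d_2(y)=0$ (e.g.\ multiplying by $x$ on one side) only reproduce tautologies via \eqref{eq:xyx}, and one genuinely has to polarize in $x$ — using the \emph{linear}, not merely quadratic, dependence on $x$ — before the conclusion $d_1=d_2=0$ can be extracted. Verifying the triality identity for $t_{x,y}$ in Step~1 is likewise purely a matter of assembling the linearizations of \eqref{eq:xyx} in the right order, with no conceptual surprises.
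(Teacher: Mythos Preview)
The paper does not prove this theorem at all: it is quoted as a known result, with references to \cite[Lemma 2.1]{Eld04Ibero} and \cite[Chapter VIII]{KMRT}, and the statement ends with a \qed\ symbol precisely because no proof is supplied. So there is nothing in the paper to compare your argument against.

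That said, your outline is essentially the standard direct proof one finds in those references, and the logic is sound. A couple of minor remarks: your computation $n(x)\,d_2(y)=(x*y)*d_1(x)$ is correct (it uses the polarized identity $(d_1(x)*y)*x+(x*y)*d_1(x)=n(d_1(x),x)\,y=0$, valid since $d_1\in\fro(S,n)$), and from there the injectivity of $\pi_0$ does follow after polarization in $x$, though you have left the actual extraction of $d_1=d_2=0$ as a sketch. Your reduction of the bracket formula to the classical identity $[\sigma_{a,b},\sigma_{x,y}]=\sigma_{\sigma_{a,b}(x),y}+\sigma_{x,\sigma_{a,b}(y)}$ via the isomorphism $\pi_0$ is clean and correct.
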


\bigskip

Now, given two symmetric composition algebras $(S,*,n)$ and $(S',\star,n')$, one can form the Lie algebra
\begin{equation}\label{eq:gSS'}
\frg=\frg(S,S')=\bigl(\tri(S)\oplus\tri(S')\bigr)\oplus\Bigl(\oplus_{i=0}^2\iota_i(S\otimes S')\Bigr),
\end{equation}
where $\iota_i(S\otimes S')$ is just a copy of $S\otimes S'$ ($i=0,1,2$) (and unadorned tensor products are always considered over the ground field $k$), with bracket given by:

\smallskip
\begin{itemize}
\item the Lie bracket in $\tri(S)\oplus\tri(S')$, which thus becomes  a Lie subalgebra of $\frg$,
\smallskip

\item $[(d_0,d_1,d_2),\iota_i(x\otimes
 x')]=\iota_i\bigl(d_i(x)\otimes x'\bigr)$,
\smallskip

\item
 $[(d_0',d_1',d_2'),\iota_i(x\otimes
 x')]=\iota_i\bigl(x\otimes d_i'(x')\bigr)$,
\smallskip

\item $[\iota_i(x\otimes x'),\iota_{i+1}(y\otimes y')]=
 \iota_{i+2}\bigl((x* y)\otimes (x'\star y')\bigr)$ (indices modulo
 $3$),
\smallskip

\item $[\iota_i(x\otimes x'),\iota_i(y\otimes y')]=
 n'(x',y')\theta^i(t_{x,y})+
 n(x,y)\theta'^i(t'_{x',y'})$,

\end{itemize}
\smallskip

\noindent
for any $i=0,1,2$ and elements $x,y\in S$, $x',y'\in S'$,
$(d_0,d_1,d_2)\in\tri(S)$, and $(d_0',d_1',d_2')\in\tri(S')$. Here
$\theta$ denotes the automorphism considered in \eqref{eq:theta}
in $\tri(S)$, and $\theta'$ the analogous automorphism in $\tri(S')$, while $t_{x,y}$ is defined by \eqref{eq:txy},
and $t'_{x',y'}$ denotes the analogous elements in $\tri(S')$.

The type of the Lie algebra thus obtained is given in Table \ref{ta:FMS}, which is the classical Freudenthal Magic Square, and where $S_i$ or $S_i'$ denotes a symmetric composition algebra of dimension $i$.

\begin{table}[h!]
$$
\vbox{\offinterlineskip
 \halign{\hfil\ $#$\quad \hfil&%
 \vreglon #%
 &\hfil\quad $#$\quad \hfil&\hfil$#$\hfil
 &\hfil\quad $#$\quad \hfil&\hfil\quad $#$\quad \hfil\cr
 \bigstrut &width 0pt&S_1&S_2&S_4&S_8\cr
 &\multispan5{\hreglonfill}\cr
 S_1'&&A_1&A_2&C_3&F_4\cr
 \bigstrut S_2'&&A_2 &A_2\oplus A_2&A_5&E_6\cr
 \bigstrut S_4'&&C_3&A_5 &D_6&E_7\cr
 \bigstrut S_8'&&F_4&E_6& E_7&E_8\cr}}
$$
\bigskip
\caption{Freudenthal Magic Square}\label{ta:FMS}
\end{table}

The Lie algebra $\frg(S,S')$ is naturally $\bZ_2\times\bZ_2$-graded with
\begin{equation}\label{eq:Z2Z2gSS'}
\begin{aligned}
&&\frg_{(\bar 0,\bar 0)}&=\tri(S)\oplus\tri(S'),&&\\
\frg_{(\bar 1,\bar 0)}&=\iota_0(S\otimes S'),\quad&
\frg_{(\bar 0,\bar 1)}&=\iota_1(S\otimes S'),&
\frg_{(\bar 1,\bar 1)}&=\iota_2(S\otimes S').
\end{aligned}
\end{equation}

Also, the order $3$ automorphisms $\theta$ and $\theta'$ extend to an order $3$ automorphism $\Theta$ of $\frg(S,S')$ such that its restriction to $\tri(S)$ (respectively $\tri(S')$) is $\theta$ (respectively $\theta'$), while
\begin{equation}\label{eq:Theta}
\Theta\bigl(\iota_i(x\otimes x')\bigr)=\iota_{i+1}(x\otimes x')
\end{equation}
for any $x\in S$, $x'\in S'$ and $i=0,1,2$ (indices modulo $3$).

If the ground field contains the cubic roots of $1$, the  eigenspaces of $\Theta$ constitute a $\bZ_3$-grading of $\frg(S,S')$.

\begin{remark}\label{re:gSStauS'}
Let $\tau$ be an automorphism of a Hurwitz algebra $(C,\cdot,n)$ with $\tau^3=1$. Consider the para-Hurwitz algebra $S^{\bullet}=\bar C$, with multiplication $x\bullet y=\bar x\cdot\bar y$, and the symmetric composition algebra $S^*$ with multiplication $x*y=\tau(\bar x)\cdot \tau^2(\bar y)$ as in \eqref{eq:Petersson}. Then it is shown in \cite{EldNewLook} that given any other symmetric composition algebra $(S',\star,n')$, the Lie algebras $\frg(S^\bullet,S')$ and $\frg(S^*,S')$ are isomorphic as $\bZ_2\times\bZ_2$-graded Lie algebras.\hfill\qed
\end{remark}

\medskip

\subsection{Gradings on $D_4$}

Assume here that the ground field, besides being of characteristic $\ne 2,3$, contains the cubic roots of $1$.

\smallskip

Let  $(C,\cdot,n)$ be a Cayley algebra with the $\bZ_2^3$-grading in Theorem \ref{th:gradingsCayley} and consider the corresponding para-Cayley algebra $\bar C$, which inherits this grading. This, in turn, induces a grading on the orthogonal Lie algebra $\fro(C,n)$, with
\begin{equation}\label{eq:oCnZ23}
\fro(C,n)_\mu=\{d\in\fro(C,n): d(C_{\gamma})\subseteq C_{\gamma+\mu}\ \forall\gamma\in\bZ_2^3\}.
\end{equation}
It is straightforward to check that $\fro(C,n)_{(\bar 0,\bar 0,\bar 0)}=0$, while $\fro(C,n)_\mu$ is a Cartan subalgebra of $\fro(C,n)$ for any $0\ne \mu\in \bZ_2^3$. Thus, this $\bZ_2^3$-grading is a grading where all its nonzero homogeneous components are Cartan subalgebras. Borrowing the definition in \cite{Thompson}, this will be called a \emph{Dempwolff decomposition}. Such a grading is called \emph{very pure} in \cite{HesselinkPureSpecial}.

Since we have $7$ homogeneous components, all of them of dimension $4$, the \emph{type} of this grading is $(0,0,0,7)$. The type $(h_1,\ldots,h_l)$ of a grading indicates that there are $h_i$ homogeneous components of dimension $i$, for any $i=1,\ldots,l$ (see \cite{HesselinkPureSpecial}).

\smallskip

The order $3$ automorphism $\theta$ of $\tri(\bar C)$ in \eqref{eq:theta} induces a $\bZ_3$-grading of $\tri(\bar C)$ whose homogeneous components are the eigenspaces of $\theta$. This is compatible with the $\bZ_2^3$-grading above, thus appearing a $\bZ_2^3\times\bZ_3$-grading of $\frt=\tri(\bar C)$ where, if $\omega^3=1\ne \omega$:
\[
\begin{split}
\frt_{(\mu,\bar \jmath)}=\{(d_0,d_1,d_2)\in \frt :  d_i&\in\fro(C,n)_\mu\ (i=0,1,2)\ \text{and}\\
& \theta\bigl((d_0,d_1,d_2)\bigr)=\omega^j(d_0,d_1,d_2)\}
\end{split}
\]
for any $\mu\in\bZ_2^3$ and $j=0,1,2$.

Since the projection $\pi_0:\tri(\bar C)\rightarrow \fro(C,n):$ $(d_0,d_1,d_2)\mapsto d_0$, is an isomorphism (Theorem \ref{th:triality}), we obtain in this way a $\bZ_2^3\times \bZ_3$-grading on $\fro(C,n)$. Denote again by $\theta$ the order $3$ automorphism induced by $\theta$ on $\fro(C,n)$.

As shown in \cite[(3.79)]{Schafer}, $\fro(C,n)=\der C\oplus L_{C_0}\oplus R_{C_0}$, where $C_0=\{ x\in C: n(x,1)=0\}$ and $L_x$ and $R_x$ denote the left and right multiplications by $x$ in $C$. Thus, as a $\der C$-module, $\fro(C,n)$ is the direct sum of the adjoint module and of two copies of the seven dimensional irreducible module $C_0$.

Our assumption on the characteristic of the ground field  being different from $2$ and $3$ implies that the Killing form $\kappa$ of $\fro(C,n)$ is nondegenerate. Besides, $\kappa\bigl(\fro(C,n)_\mu,\fro(C,n)_\nu\bigr)=0$ unless $\mu+\nu=0$, $\mu,\nu\in\bZ_2^3\times \bZ_3$.

Moreover, $\der C=\{d\in\fro(C,n): \theta(d)=d\}$ holds, and by complete reducibility the eigenspaces $\fro(C,n)_{\omega^j}=\{d\in\fro(C,n): \theta(d)=\omega^jd\}$ ($j=1,2$) are isomorphic, as modules for the Lie algebra $\der C$, to the seven dimensional irreducible module $C_0$. It follows that $\dim\fro(C,n)_{(\mu,\bar \jmath)}=1$ for any $0\ne\mu\in\bZ_2^3$ and $j=1,2$ and $\fro(C,n)_{(0,\bar \jmath)}=0$ for any $j$, and hence $\dim\fro(C,n)_{(\mu,\bar 0)}=\dim(\der C)_\mu=2$ for any $0\ne\mu\in\bZ_2^3$ (so this is a Dempwolff decomposition, or a very pure grading, of $\der C$, a simple Lie algebra of type $G_2$). Therefore, the next result follows:

\begin{proposition}\label{pr:Z23D4}
A $\bZ_2^3$-grading of a para-Cayley algebra $(\bar C,\bullet,n)$ over a field $k$ of characteristic $\ne 2,3$ containing the cubic roots of $1$ induces a $\bZ_2^3\times\bZ_3$-grading of the orthogonal Lie algebra $\fro(C,n)$ of type $(14,7)$.
\end{proposition}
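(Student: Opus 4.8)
The plan is to build the $\bZ_2^3\times\bZ_3$-grading explicitly and then simply tally the dimensions of its homogeneous components; most of the needed input has in fact been assembled in the paragraphs preceding the statement, so the proof is mainly a matter of organizing it. First I would recall (Theorem \ref{th:gradingsCayley}) that $C$ carries a $\bZ_2^3$-grading with all eight homogeneous components one-dimensional, that $\bar C$ inherits it, and that the induced grading \eqref{eq:oCnZ23} on $\fro(C,n)$ has $\fro(C,n)_{(\bar 0,\bar 0,\bar 0)}=0$ while $\fro(C,n)_\mu$ is a Cartan subalgebra, hence four-dimensional, for each of the seven $\mu\ne 0$. The automorphism $\theta$ of \eqref{eq:theta}, transported to $\fro(C,n)$ through the triality isomorphism $\pi_0$ of Theorem \ref{th:triality}, is a Lie algebra automorphism of order $3$; on $\tri(\bar C)$ it acts by cyclically permuting the three slots, which is degree-preserving for the $\bZ_2^3$-grading, so its eigenspace decomposition is a $\bZ_3$-grading compatible with the $\bZ_2^3$-grading. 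Together they yield a $\bZ_2^3\times\bZ_3$-grading, and it remains to compute the dimension of each piece $\fro(C,n)_{(\mu,\bar\jmath)}$.

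Next I would treat the three values of the $\bZ_3$-coordinate. For $j=0$ the homogeneous part is $\der C=\Fix\theta$ with its induced $\bZ_2^3$-grading, and in particular $\fro(C,n)_{(0,\bar 0)}\subseteq\fro(C,n)_{(\bar 0,\bar 0,\bar 0)}=0$. For $j=1,2$, complete reducibility (valid since the characteristic is $\ne 2,3$, so the Killing form of $\fro(C,n)$ is nondegenerate) exhibits $\fro(C,n)_{\omega^j}$ as a copy of the seven-dimensional irreducible $\der C$-module $C_0$; it is moreover a graded submodule, and its $(\bar 0,\bar 0,\bar 0)$-component vanishes, being contained in $\fro(C,n)_{(\bar 0,\bar 0,\bar 0)}=0$. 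Since the $\bZ_2^3$-grading of a simple module is determined up to a shift of degrees, and the only shift consistent with a vanishing $(\bar 0,\bar 0,\bar 0)$-component is the trivial one (in $\bZ_2^3$, shifting by $\nu$ moves the zero slot of $C_0$ to $-\nu=\nu$), the grading of $\fro(C,n)_{\omega^j}$ coincides with that of $C_0=\oplus_{0\ne\gamma}C_\gamma$; hence $\fro(C,n)_{(0,\bar\jmath)}=0$ and $\dim\fro(C,n)_{(\mu,\bar\jmath)}=1$ for every $\mu\ne 0$ and $j=1,2$. Finally, since $\fro(C,n)_\mu=\fro(C,n)_{(\mu,\bar 0)}\oplus\fro(C,n)_{(\mu,\bar 1)}\oplus\fro(C,n)_{(\mu,\bar 2)}$ has dimension $4$ for $\mu\ne 0$, we read off $\dim\fro(C,n)_{(\mu,\bar 0)}=4-1-1=2$.

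It then only remains to count. Every homogeneous component with $\mu=0$ vanishes. The one-dimensional components are the $\fro(C,n)_{(\mu,\bar 1)}$ and the $\fro(C,n)_{(\mu,\bar 2)}$ with $\mu\ne 0$, that is $7+7=14$ of them; the two-dimensional components are the $\fro(C,n)_{(\mu,\bar 0)}$ with $\mu\ne 0$, seven in all; and there are no others. Thus the type is $(14,7)$, and as a consistency check the dimension adds up to $14\cdot 1+7\cdot 2=28=\dim\fro(C,n)$, matching type $D_4$. I expect the one genuinely delicate point to be the identification, in the previous paragraph, of the graded structure on the two seven-dimensional $\theta$-eigenspaces --- that their $\bZ_2^3$-grading is the unshifted one coming from $C_0$ rather than a twist of it; once that is in hand, the rest is the routine bookkeeping already carried out in the discussion leading up to the statement.
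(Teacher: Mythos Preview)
Your proposal is correct and follows essentially the same route as the paper: both combine the induced $\bZ_2^3$-grading on $\fro(C,n)$ with the $\bZ_3$-grading from the eigenspaces of $\theta$, identify $\der C$ as the fixed subalgebra and the two other eigenspaces as copies of the seven-dimensional module $C_0$, and then read off the dimensions. Your shift argument for pinning down the graded structure of the $\omega^j$-eigenspaces is in fact more explicit than the paper's treatment, which simply asserts the conclusion (``It follows that $\dim\fro(C,n)_{(\mu,\bar\jmath)}=1$\ldots''); your version makes clear why no nontrivial twist can occur.
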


\smallskip

For an algebraically closed field of characteristic $0$, the gradings of the orthogonal Lie algebra $\fro(8)$ have been recently determined in \cite{DMV}. The above gives a nice description of the unique grading of type $(14,7)$ in \cite{DMV}.

\bigskip

Consider now an Okubo algebra with isotropic norm $(\calO,*,n)$, endowed with a standard $\bZ_3^2$-grading (as in \eqref{eq:Z32standard}). In this case the orthogonal Lie algebra $\fro(\calO,n)$ is $\bZ_3^2$-graded too with
\[
\fro(\calO,n)_\mu=\{d\in\fro(\calO,n): d(\calO_\gamma)\subseteq \calO_{\gamma+\mu}\ \forall\gamma\in\bZ_3^2\},
\]
and $\fro(\calO,n)_0$ is a Cartan subalgebra of $\fro(\calO,n)$ (in particular $\dim\fro(\calO,n)_0=4$), while $\dim\fro(\calO,n)_\mu=3$ for any $0\ne \mu\in\bZ_3^2$. That is, the type is $(0,0,8,1)$.

Again, the automorphism $\theta$ refines this grading to a $\bZ_3^3$-grading, where
\[
\fro(\calO,n)_{(\bar \jmath_1,\bar \jmath_2,\bar \jmath_3)}=
 \{d\in \fro(\calO,n)_{(\bar \jmath_1,\bar \jmath_2)}: \theta(d)=\omega^{j_3}d\}.
\]
Here the $1$-eigenspace $\{d\in\fro(\calO,n): \theta(d)=d\}=\der(\calO,*)$ is the Lie algebra $\ad_{\calO}^*=\{\ad_x^*:x\in \calO\}$ (see \cite{EPLargeDer}), where $\ad_x^*(y)=x*y-y*x=(l_x-r_x)(y)$, so that here the $\bZ_3^3$-grading is of type $(8)$. Using that the $\omega$ and $\omega^2$-eigenspaces relative to $\theta$ are dual relative to the Killing form of $\fro(\calO,n)$, it follows that $\fro(\calO,n)_{(\bar 0,\bar 0,\bar 0)}=0$, that $\dim\fro(\calO,n)_{(\bar 0,\bar 0,\bar \jmath)}=2$ for $j=1,2$, and that $\dim\fro(\calO,n)_{(\bar \jmath_1,\bar \jmath_2,\bar \jmath_3)}=1$ if $(\bar \jmath_1,\bar \jmath_2)\ne (\bar 0,\bar 0)$. Thus:

\begin{proposition}\label{pr:Z33o8}
The standard $\bZ_3^2$-grading on an Okubo algebra $(\calO,*,n)$ over a field of characteristic $\ne 2,3$ containing the cubic roots of $1$ induces a $\bZ_3^3$-grading on the orthogonal Lie algebra $\fro(\calO,n)$ of type $(24,2)$.
\end{proposition}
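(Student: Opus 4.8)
The plan is to make precise the three dimension counts in the discussion preceding the statement. First I would fix the standard $\bZ_3^2$-grading $\calO=\oplus_{0\ne a\in\bZ_3^2}\calO_a$ of Theorem~\ref{th:gradingsOkubo1}(iii), in which each $\calO_a$ is one-dimensional and $\calO_{(\bar 0,\bar 0)}=0$, and compute the induced grading on $\fro(\calO,n)$. Since $n(\calO_a,\calO_b)=0$ unless $a+b=0$, the eight nonzero degrees fall into the four hyperbolic planes $\calO_a\oplus\calO_{-a}$; a degree-$(\bar 0,\bar 0)$ skew operator acts as a scalar on each $\calO_a$ and the opposite scalar on $\calO_{-a}$, so $\dim\fro(\calO,n)_{(\bar 0,\bar 0)}=4$, and being a toral subalgebra of dimension equal to the rank of $D_4$ it is a Cartan subalgebra. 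For $0\ne\mu$, a degree-$\mu$ skew operator is given by scalars $c_a$ on the seven degrees with $a\ne 0\ne a+\mu$; the involution $a\mapsto -a-\mu$ of this set has three $2$-cycles, each forcing one linear relation, together with the single fixed point $a=\mu$, at which skew-symmetry and $\charac k\ne 2$ force $c_\mu=0$. Hence $\dim\fro(\calO,n)_\mu=3$, the $\bZ_3^2$-grading is of type $(0,0,8,1)$, and $28=4+8\cdot 3$.

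Next I would refine by the order-$3$ automorphism $\theta$, transported to $\fro(\calO,n)$ via the isomorphism $\pi_0$ of Theorem~\ref{th:triality}; as $k$ contains a primitive cube root $\omega$ of $1$, this yields $\fro(\calO,n)=\fro_1\oplus\fro_\omega\oplus\fro_{\omega^2}$, a $\bZ_3$-grading compatible with the $\bZ_3^2$-grading because on $\tri(\calO)$ the shift $(d_0,d_1,d_2)\mapsto(d_2,d_0,d_1)$ preserves degrees; the common refinement is the asserted $\bZ_3^3$-grading, with $\fro(\calO,n)_{(\bar\jmath_1,\bar\jmath_2,\bar\jmath_3)}=\{d\in\fro(\calO,n)_{(\bar\jmath_1,\bar\jmath_2)}:\theta(d)=\omega^{j_3}d\}$. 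Here $\fro_1=\der(\calO,*)=\ad_\calO^*$ (see~\cite{EPLargeDer}) is $8$-dimensional of type $A_2$, and since $\ad_{x_a}^*(\calO_b)\subseteq\calO_{a+b}$ one gets $\fro(\calO,n)_{(a,\bar 0)}=k\,\ad_{x_a}^*$ for $0\ne a$ and $\fro(\calO,n)_{(\bar 0,\bar 0,\bar 0)}=0$; so the $\bZ_3^2$-grading on $\der(\calO,*)$ is of type $(8)$, furnishing the $8$ one-dimensional components $\fro(\calO,n)_{(a,\bar 0)}$, $0\ne a$, and a vanishing degree-$0$ component.

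Then I would invoke the Killing form $\kappa$ of $D_4$, nondegenerate ($\charac k\ne 2,3$) and $\theta$-invariant, so that $\kappa$ pairs $\fro(\calO,n)_g$ nondegenerately with $\fro(\calO,n)_{-g}$ for $g\in\bZ_3^3$. In particular $\fro(\calO,n)_{(\bar 0,\bar 0,\bar 1)}$ and $\fro(\calO,n)_{(\bar 0,\bar 0,\bar 2)}$ have equal dimension and, together with $\fro(\calO,n)_{(\bar 0,\bar 0,\bar 0)}=0$, span the $4$-dimensional $\fro(\calO,n)_{(\bar 0,\bar 0)}$, so each is $2$-dimensional. For $0\ne a$ the three spaces $\fro(\calO,n)_{(a,\bar\jmath)}$, $\bar\jmath\in\bZ_3$, have total dimension $3$ with $\fro(\calO,n)_{(a,\bar 0)}$ one-dimensional, so the claim reduces to $\dim\fro(\calO,n)_{(a,\bar 1)}=\dim\fro(\calO,n)_{(a,\bar 2)}=1$. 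This is the main obstacle: duality alone only gives $\dim\fro(\calO,n)_{(a,\bar 1)}=\dim\fro(\calO,n)_{(-a,\bar 2)}$, which with the relation above does not exclude the pattern $(2,0)$ at $a$ and $(0,2)$ at $-a$.

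To break that symmetry I would identify the $\der(\calO,*)$-modules $\fro_\omega$ and $\fro_{\omega^2}$. Since $k$ contains $\omega$, the Okubo algebra is $P_8(k)$, $\der(\calO,*)$ is the split $\frsl_3$ and $\calO$ is its adjoint module; as $\fro(\calO,n)\cong\Lambda^2\calO$ as $\der(\calO,*)$-modules, the complement of the adjoint summand $\fro_1$ is $\fro_\omega\oplus\fro_{\omega^2}\cong S^3V\oplus S^3V^{*}$ (two ten-dimensional irreducibles, in the standard notation for $\frsl_3$-modules). Restricting $S^3V$ to the quasitorus $Q\cong\bZ_3^2$ realizing the grading—whose preimage in $\mathrm{GL}(V)$ is the Heisenberg group of order $27$, acting on $S^3V$ through $Q$ because its centre acts by $\omega^3=1$—a character count gives the trivial character with multiplicity $2$ and each of the other eight characters with multiplicity $1$, and the same for $S^3V^{*}$. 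Hence $\dim\fro(\calO,n)_{(a,\bar 1)}=\dim\fro(\calO,n)_{(a,\bar 2)}=1$ for every $0\ne a$; alternatively all of this can be read off a direct computation in the model $\calO_{\alpha,\beta}$ of Table~\ref{ta:Oalphabeta}. Assembling the counts, the $\bZ_3^3$-grading has a vanishing degree-$0$ component, two components of dimension $2$ (degrees $(\bar 0,\bar 0,\bar 1)$ and $(\bar 0,\bar 0,\bar 2)$) and $24$ components of dimension $1$ (the $\fro(\calO,n)_{(a,\bar\jmath)}$ with $0\ne a$); since $2\cdot 2+24=28=\dim D_4$, this is exhaustive and the type is $(24,2)$.
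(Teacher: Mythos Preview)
Your argument is correct and follows the same overall strategy as the paper: the induced $\bZ_3^2$-grading on $\fro(\calO,n)$, the refinement by $\theta$, the identification $\fro_1=\der(\calO,*)=\ad_\calO^*$ with its type-$(8)$ grading, and the Killing-form duality between $\fro_\omega$ and $\fro_{\omega^2}$. Where you go further than the paper is precisely where you should: the paper's discussion simply asserts that duality gives $\dim\fro(\calO,n)_{(a,\bar\jmath)}=1$ for $0\ne a$ and $j=1,2$, but as you correctly observe, duality alone only yields $\dim\fro_{(a,\bar 1)}=\dim\fro_{(-a,\bar 2)}$, which does not exclude the $(2,0)/(0,2)$ pattern. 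Your identification $\fro_\omega\oplus\fro_{\omega^2}\cong S^3V\oplus S^3V^*$ (via $\fro(\calO,n)\cong\Lambda^2\calO$ and the decomposition $\Lambda^2(\mathrm{ad})\cong\mathrm{ad}\oplus V(3,0)\oplus V(0,3)$ for $\frsl_3$) together with the Heisenberg character count is exactly the missing step, and parallels what the paper does explicitly in the para-Cayley case (where it identifies $\fro_{\omega^j}\cong C_0$).

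One small correction: the assertion ``since $k$ contains $\omega$, the Okubo algebra is $P_8(k)$'' is not true in general. Over such a field the Okubo algebras $\calO_{\alpha,\beta}$ correspond to symbol algebras $(\alpha,\beta)_\omega$ of degree~$3$, and $\calO_{\alpha,\beta}\cong P_8(k)$ only when that symbol splits; so $\der(\calO,*)$ may be a nonsplit inner form of $\frsl_3$. This does not affect your conclusion: either invoke your own alternative (direct computation in the model $\calO_{\alpha,\beta}$ of Table~\ref{ta:Oalphabeta}, which works for all $\alpha,\beta$), or note that the dimensions of the graded pieces are unchanged under scalar extension to $\bar k$, where $\calO_{\bar k}\cong P_8(\bar k)$ and your representation-theoretic argument applies verbatim.
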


\smallskip

For an algebraically closed field of characteristic $0$, this gives a concrete description of the unique $\bZ_3^3$-grading of $\fro(8)$ in \cite{DMV}.

\medskip

\subsection{Gradings on $F_4$}

Let $(C,\cdot,n)$ be a Cayley algebra and consider the Albert algebra (or exceptional Jordan algebra) $J=H_3(C)$ of $3\times 3$ hermitian matrices over $C$, relative to the involution $(a_{ij})^*=(\bar a_{ij})$. This is a Jordan algebra under the multiplication
\begin{equation}\label{eq:Jproduct}
x\circ y=\frac{1}{2}(xy+yx).
\end{equation}
Consider the corresponding para-Cayley algebra $(\bar C,\bullet,n)$ ($a\bullet b=\bar a\cdot\bar b$).
Then,
\begin{equation}\label{eq:JH3C}
\begin{split}
J=H_3(C)&=\left\{ \begin{pmatrix} \alpha_0 &\bar a_2& a_1\\
  a_2&\alpha_1&\bar a_0\\ \bar a_1&a_0&\alpha_2\end{pmatrix} :
  \alpha_0,\alpha_1,\alpha_2\in k,\ a_0,a_1,a_2\in C\right\}\\[6pt]
 &=\bigl(\oplus_{i=0}^2 ke_i\bigr)\oplus
     \bigl(\oplus_{i=0}^2\iota_i(S)\bigr),
\end{split}
\end{equation}
where
\begin{equation}\label{eq:eisiotas}
\begin{aligned}
e_0&= \begin{pmatrix} 1&0&0\\ 0&0&0\\ 0&0&0\end{pmatrix}, &
 e_1&=\begin{pmatrix} 0&0&0\\ 0&1&0\\ 0&0&0\end{pmatrix}, &
 e_2&= \begin{pmatrix} 0&0&0\\ 0&0&0\\ 0&0&1\end{pmatrix}, \\
 \iota_0(a)&=2\begin{pmatrix} 0&0&0\\ 0&0&\bar a\\
 0&a&0\end{pmatrix},&
 \iota_1(a)&=2\begin{pmatrix} 0&0&a\\ 0&0&0\\
 \bar a&0&0\end{pmatrix},&
 \iota_2(a)&=2\begin{pmatrix} 0&\bar a&0\\ a&0&0\\
 0&0&0\end{pmatrix},
\end{aligned}
\end{equation}
for any $a\in C$. Identify $ke_0\oplus ke_1\oplus ke_2$ to $k^3$ by
means of $\alpha_0e_0+\alpha_1e_1+\alpha_2e_2\simeq
(\alpha_0,\alpha_1,\alpha_2)$. Then the commutative
multiplication \eqref{eq:Jproduct} becomes:
\begin{equation}\label{eq:Jniceproduct}
\left\{\begin{aligned}
 &(\alpha_0,\alpha_1,\alpha_2)\circ(\beta_1,\beta_2,\beta_3)=
    (\alpha_0\beta_0,\alpha_1\beta_1,\alpha_2\beta_2),\\
 &(\alpha_0,\alpha_1,\alpha_2)\circ \iota_i(a)
  =\frac{1}{2}(\alpha_{i+1}+\alpha_{i+2})\iota_i(a),\\
 &\iota_i(a)\circ\iota_{i+1}(b)=\iota_{i+2}(a\bullet b),\\
 &\iota_i(a)\circ\iota_i(b)=2n(a,b)\bigl(e_{i+1}+e_{i+2}\bigr),
\end{aligned}\right.
\end{equation}
for any $\alpha_i,\beta_i\in k$, $a,b\in C$, $i=0,1,2$, and where
indices are taken modulo $3$.

This shows a natural $\bZ_2^2$-grading on $J$ with:
\[
J_{(\bar 0,\bar 0)}=k^3,\quad J_{(\bar 1,\bar 0)}=\iota_0(C),\quad
J_{(\bar 0,\bar 1)}=\iota_1(C),\quad J_{(\bar 1,\bar 1)}=\iota_2(C).
\]

Actually, a more general situation can be considered, which has its own independent interest:

\begin{theorem}\label{th:Jsymmetriccompo}
Let $(S,*,n)$ be any symmetric composition algebra over a field $k$ of characteristic $\ne 2$. On the vector space $\bA=\bA(S)=k^3\oplus\bigl(\oplus_{i=0}^2\iota_i(S)\bigr)$ ($\iota_i(S)$ is just a copy of $S$) define a commutative multiplication by the formulas:
\begin{equation}\label{eq:Jniceproductbis}
\left\{\begin{aligned}
 &(\alpha_0,\alpha_1,\alpha_2)\circ(\beta_1,\beta_2,\beta_3)=
    (\alpha_0\beta_0,\alpha_1\beta_1,\alpha_2\beta_2),\\
 &(\alpha_0,\alpha_1,\alpha_2)\circ \iota_i(a)
  =\frac{1}{2}(\alpha_{i+1}+\alpha_{i+2})\iota_i(a),\\
 &\iota_i(a)\circ\iota_{i+1}(b)=\iota_{i+2}(a* b),\\
 &\iota_i(a)\circ\iota_i(b)=2n(a,b)\bigl(e_{i+1}+e_{i+2}\bigr),
\end{aligned}\right.
\end{equation}
for any $\alpha_i,\beta_i\in k$, $a,b\in S$, $i=0,1,2$, and where
indices are taken modulo $3$.

Then $\bA$ is a central simple Jordan algebra.
\end{theorem}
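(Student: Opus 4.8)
The plan is to reduce the statement to the classical fact that, for every Hurwitz algebra $(C,\cdot,n)$ over a field of characteristic $\ne 2$, the algebra $H_3(C)$ of hermitian $3\times 3$ matrices with the product \eqref{eq:Jproduct} is a central simple Jordan algebra (a reduced simple Jordan algebra of degree $3$, the Albert algebra when $\dim_k C=8$). Commutativity of $\bA(S)$ is visible in \eqref{eq:Jniceproductbis}, so only the Jordan identity and central simplicity must be established. Since the structure constants in \eqref{eq:Jniceproductbis} are polynomials in those of $(S,*,n)$, one has $\bar k\otimes_k\bA(S)=\bA(\bar k\otimes_k S)$ for an algebraic closure $\bar k$ of $k$; the Jordan identity in its linearized, multilinear form is a polynomial identity, hence it holds in $\bA(S)$ as soon as it holds in $\bA(\bar k\otimes_k S)$, and central simplicity likewise descends from $\bar k$ (the centroid and any proper ideal would survive scalar extension). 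Therefore it suffices to prove the theorem over an algebraically closed field, where, by the classification of symmetric composition algebras recalled in Section \ref{se:SymCompo}, $S$ is, up to isomorphism, either a split para-Hurwitz algebra of dimension $1,2,4$ or $8$, or the pseudo-octonion algebra $P_8(k)$.

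\textbf{The para-Hurwitz case.} If $S=\bar C$ is the para-Hurwitz algebra of a Hurwitz algebra $C$, then comparing \eqref{eq:Jniceproductbis} (with $*=\bullet$) to the multiplication \eqref{eq:Jniceproduct} of $H_3(C)$ shows that $\bA(\bar C)$ is literally $H_3(C)$, and there is nothing more to do.

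\textbf{The Okubo case.} For $S=P_8(k)$ I would use that, by definition, $P_8(k)=\overline{C(k)}_{\tau_{st}}$, so that its product is $x*y=\tau_{st}(\bar x)\cdot\tau_{st}^{2}(\bar y)$, where $C=C(k)$ and $\tau_{st}$ is an order $3$ automorphism of $C$; in particular $\tau_{st}$ is an isometry of $(C,n)$, satisfies $\overline{\tau_{st}(x)}=\tau_{st}(\bar x)$, and $\tau_{st}^{3}=1$. Define a linear bijection $\psi\colon H_3(C)\to\bA(P_8(k))$ by $\psi|_{k^3}=\mathrm{id}$ and $\psi\bigl(\iota_i(a)\bigr)=\iota_i\bigl(\tau_{st}^{c_i}(a)\bigr)$ with $c_0=0$, $c_1=2$, $c_2=1$. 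Since $\tau_{st}$ is an isometry, $\psi$ preserves the relations $\iota_i(a)\circ\iota_i(b)=2n(a,b)(e_{i+1}+e_{i+2})$ and the action of $k^3$; and a routine bookkeeping of exponents modulo $3$ (using $\overline{\tau_{st}(x)}=\tau_{st}(\bar x)$ and $\tau_{st}^{3}=1$, which forces $c_{i+2}\equiv c_i+1\equiv c_{i+1}+2\pmod 3$) shows that $\psi\bigl(\iota_i(a)\circ\iota_{i+1}(b)\bigr)=\psi(\iota_i(a))\circ\psi(\iota_{i+1}(b))$ as well. Hence $\psi$ is an algebra isomorphism, so $\bA(P_8(k))\cong H_3(C)$ is the Albert algebra. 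Combining the two cases, $\bA(\bar k\otimes_k S)$ is in every case a central simple Jordan algebra over $\bar k$, and therefore $\bA(S)$ is a central simple Jordan algebra over $k$.

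\textbf{Main point of care.} The delicate point is that an Okubo algebra over $k$ need not contain a nonzero idempotent, hence need not be a Petersson algebra over $k$; consequently $\bA(S)$ is, in general, genuinely not of the form $H_3(C)$ with $C$ a Hurwitz algebra over $k$, which is exactly why the descent to $\bar k$ is used instead of a direct identification. A self-contained alternative, avoiding base change, would be to verify the linearized Jordan identity directly on $\bA$, organizing the computation by the natural $\bZ_2^2$-grading of $\bA$ with $\bA_{(\bar 0,\bar 0)}=k^3$ and remaining homogeneous components $\iota_0(S),\iota_1(S),\iota_2(S)$: almost every case is immediate from \eqref{eq:Jniceproductbis}, and the one carrying all the weight is when the three arguments lie in the $\iota_i(S)$ components, where one must combine the associativity of the norm \eqref{eq:normassociative}, the identity \eqref{eq:xyx}, and the multiplicativity $n(x*y)=n(x)n(y)$. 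I expect that case to be the main obstacle in the direct approach.
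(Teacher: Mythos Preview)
Your proposal is correct and follows essentially the same route as the paper: extend scalars to an algebraic closure, identify $\bA(\bar C)$ with $H_3(C)$ in the para-Hurwitz case, and in the Okubo case exhibit an explicit isomorphism $H_3(C(k))\to\bA(P_8(k))$ sending $\iota_i(a)\mapsto\iota_i(\tau_{st}^{-i}(a))$ (your exponents $c_0=0,\ c_1=2,\ c_2=1$ are exactly $-i\bmod 3$). The paper additionally remarks that one may reduce to the eight-dimensional case since any smaller symmetric composition algebra embeds in one of dimension $8$, but this is a cosmetic shortcut and your separate handling of the lower-dimensional para-Hurwitz cases is equally valid.
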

\begin{proof}
By extending scalars, we may assume that the ground field $k$ is algebraically closed and it is enough to deal with the eight dimensional case (as any lower dimensional symmetric composition algebra is a subalgebra of an eight dimensional one). In the para-Cayley case, $\bA$ is isomorphic to the algebra $J$ above. Otherwise $S$ is the pseudo-octonion algebra $P_8(k)=\overline{C(k)}_{\tau_{st}}$ with multiplication $a*b=\tau_{st}(\bar a)\tau_{st}^2(\bar b)=\tau_{st}(a)\bullet\tau_{st}^2(b)$.

Consider the algebra $J=H_3(C(k))$ as above. Then the linear map:
\[
\begin{split}
\Phi:J&\rightarrow \bA,\\
(\alpha_0,\alpha_1,\alpha_2)&\mapsto  (\alpha_0,\alpha_1,\alpha_2),\\
\iota_i(a)&\mapsto \iota_i(\tau_{st}^{-i}(a)),
\end{split}
\]
is easily seen to be an isomorphism. For instance,
\[
\Phi(\iota_i(a)\circ\iota_{i+1}(b))=\Phi(\iota_{i+2}(a\bullet b))=\iota_{i+2}(\tau_{st}^{-i-2}(a\bullet b)),
\]
while
\[
\begin{split}
\Phi(\iota_i(a))\circ\Phi(\iota_{i+1}(b))
 &=\iota_i(\tau_{st}^{-i}(a))\circ\iota_{i+1}(\tau_{st}^{-i-1}(b))\\
 &=\iota_{i+2}\bigl(\tau_{st}^{-i}(a)*\tau_{st}^{-i-1}(b)\bigr)\\
 &=\iota_{i+2}\bigl(\tau_{st}(\tau_{st}^{-i}(a))\bullet\tau_{st}^2(\tau_{st}^{-i-1}(b))\bigr)\\
 &=\iota_{i+2}(\tau_{st}^{-i-2}(a\bullet b)),
\end{split}
\]
for any $i=0,1,2$ (indices modulo $3$), and $a,b\in C(k)$.
\end{proof}

\smallskip

Note that the Jordan algebra $\bA(S)$ above is naturally endowed with an order $3$ automorphism $\theta$ such that $\theta\bigl((\alpha_0,\alpha_1,\alpha_2)\bigr)=(\alpha_2,\alpha_0,\alpha_1)$, and $\theta\bigl(\iota_i(a)\bigr)=\iota_{i+1}(a)$ for any $a\in S$ and $i=0,1,2$.

\medskip

Now, given an eight dimensional symmetric composition algebra $(S,*,n)$, consider the Albert algebra $J=\bA(S)$. Its Lie algebra of derivations, which is a simple Lie algebra of type $F_4$, is isomorphic to the Lie algebra $\frg(k,S)$ as follows (see \cite[Section 3]{CE2} or \cite[Section 2]{IvanProc}): First, the $\bZ_2^2$-grading on $J=\bA(S)$ induces a $\bZ_2^2$-grading on $\der J$:
\[
(\der J)_\mu=\{d\in\der J: d(J_{\gamma})\subseteq J_{\gamma+\mu}\ \forall\gamma\in \bZ_2^2\},
\]
and the linear map:
\[
\begin{split}
\phi:\tri(S)&\longrightarrow (\der J)_{(\bar 0,\bar 0)}\\
(d_0,d_1,d_2)&\mapsto D_{(d_0,d_1,d_2)},
\end{split}
\]
where
\begin{equation}\label{eq:Dd0d1d2}
\left\{\begin{aligned} &D_{(d_0,d_1,d_2)}(e_i)=0,\\
   &D_{(d_0,d_1,d_2)}\bigl(\iota_i(a)\bigr)=
   \iota_i\bigl(d_i(a)\bigr)
   \end{aligned}\right.
\end{equation}
for any $i=0,1,2$ and $a\in S$, is an isomorphism.

Now, for any $a\in S$ and $i=0,1,2$, consider the derivation:
\[
D_i(a)=2[L_{\iota_i(a)},L_{e_{i+1}}]
\]
(indices modulo $3$), where $L_x$ denotes the multiplication by $x$
in $J$. Then (see \cite[(2.10)]{IvanProc}):
\begin{equation}\label{eq:Diaaction}
\begin{split}
&D_i(a)(e_i)=0,\ D_i(a)(e_{i+1})=\frac{1}{2} \iota_i(a),\
  D_i(a)(e_{i+2})=-\frac{1}{2}\iota_i(a),\\
&D_i(a)\bigl(\iota_{i+1}(b)\bigr)=-\iota_{i+2}(a* b),\\
&D_i(a)\bigl(\iota_{i+2}(b)\bigr)=\iota_{i+1}(b* a),\\
&D_i(a)\bigl(\iota_i(b)\bigr)=2n(a,b)(-e_{i+1}+e_{i+2}),
\end{split}
\end{equation}
for any $i=0,1,2$ and any $a,b\in S$.

The isomorphism $\phi$ above extends to a Lie algebra isomorphism (see \cite[Theorem 3.13]{CE2}):
\begin{equation}\label{eq:PhigkSderJ}
\begin{split}
\Phi:\frg(k,S)=\tri(S)\oplus\bigl(\oplus_{i=0}^2\iota_i(k\otimes S)\bigr)&\longrightarrow \der J\\
\tri(S)\ni (d_0,d_1,d_2)&\mapsto D_{(d_0,d_1,d_2)}\\
\iota_i(k\otimes S)\ni\iota_i(1\otimes a)&\mapsto D_i(a).
\end{split}
\end{equation}

\bigskip

We can combine now the $\bZ_2^3$-grading on a para-Cayley algebra $\bar C$ with the $\bZ_2^2$-grading on either the Albert algebra $J=\bA(\bar C)$ or its Lie algebra of derivations $\der J$ to obtain a $\bZ_2^5$-grading on each of these latter algebras.

For the Albert algebra, given any $\mu\in\bZ_2^3$:
\[
\begin{split}
\bA(\bar C)_{(0,\bar 0,\bar 0)}&=k^3,\\
\bA(\bar C)_{(\mu,\bar 1,\bar 0)}&=\iota_0(\bar C_{\mu}),\\
\bA(\bar C)_{(\mu,\bar 0,\bar 1)}&=\iota_1(\bar C_{\mu}),\\
\bA(\bar C)_{(\mu,\bar 1,\bar 1)}&=\iota_2(\bar C_{\mu}),
\end{split}
\]
so this $\bZ_2^5$-grading is a grading of type $(24,0,1)$. For its Lie algebra of derivations, the grading induced in $\tri(\bar C)$ corresponds, by means of the isomorphism $\pi_0:\tri(\bar C)\rightarrow \fro(C,n)$, to the $\bZ_2^3$-grading in \eqref{eq:oCnZ23}, while $(\der J)_{(\mu,\bar 1,\bar 0)}=D_0(\bar C_{\mu})$, $(\der J)_{(\mu,\bar 0,\bar 1)}=D_1(\bar C_{\mu})$ and $(\der J)_{(\mu,\bar 1,\bar 1)}=D_2(\bar C_{\mu})$. (Alternatively, through the isomorphism $\Phi$ in \eqref{eq:PhigkSderJ}, $\frg(k,\bar C)_{(\mu,\bar 1,\bar 0)}=\iota_0(k\otimes\bar C_\mu)$, ...)

Thus, we obtain a grading of type $(24,0,0,7)$ on the central simple Lie algebra $\der J$.

\bigskip

On the other hand, assuming the ground field contains the cubic roots of $1$, we can combine the standard $\bZ_3^2$-grading on an Okubo algebra $(\calO,*,n)$ with the $\bZ_3$-grading given by the natural order $3$ automorphism $\Theta$ in $\frg(k,S)\simeq \der\bA(\calO)$ in \eqref{eq:Theta}, or the natural order $3$ automorphism in $\bA(\calO)$, to get a $\bZ_3^3$-grading of $\der\bA(\calO)$ or $\bA(\calO)$. For the Albert algebra $\bA(\calO)$, given any $0\ne\mu\in\bZ_3^2$:
\[
\begin{split}
\bA(\calO)_{(\bar 0,\bar 0,\bar 0)}&=k(e_1+e_1+e_2)=k1,\\
\bA(\calO)_{(\bar 0,\bar 0,\bar 1)}&=k(e_0+\omega^2e_1+\omega e_2),\\
\bA(\calO)_{(\bar 0,\bar 0,\bar 2)}&=k(e_0+\omega e_1+\omega^2e_2),\\
\bA(\calO)_{(\mu,\bar 0)}&=k\bigl(\sum_{i=0}^2\iota_i(a_\mu)\bigr),\\
\bA(\calO)_{(\mu,\bar 1)}
 &=k\bigl(\iota_0(a_\mu)+\omega^2\iota_1(a_\mu)+\omega\iota_2(a_\mu)\bigr),\\
\bA(\calO)_{(\mu,\bar 2)}
 &=k\bigl(\iota_0(a_\mu)+\omega\iota_1(a_\mu)+\omega^2\iota_2(a_\mu)\bigr),
\end{split}
\]
where $\omega^3=1\ne \omega$ and $a_\mu$ denotes a nonzero element in the one dimensional homogeneous space $\calO_\mu$. So the type of this $\bZ_3^3$-grading is $(27)$.

For the Lie algebra of derivations, the grading induced in $\tri(\calO)$ is the one in Proposition \ref{pr:Z33o8} of type $(24,2)$, while the subspace $\oplus_{i=0}^2D_i(\calO)$ decomposes into the direct sum of another $24$ homogeneous spaces of dimension $1$ with degrees $(\mu,\bar \jmath)$, $0\ne\mu\in\bZ_3^2$, $j=0,1,2$ (same degrees as the $24$ homogeneous one dimensional spaces in $\tri(\calO)$). It follows that the type of the $\bZ_3^3$-grading on $\der\bA(\calO)$ is $(0,26)$.

Summarizing, we have obtained the following result:

\begin{proposition}\label{pr:gradingsF4}
Let $k$ be a field of characteristic $\ne 2,3$ containing the cubic roots of $1$:
\begin{enumerate}
\item A $\bZ_2^3$-grading in a para-Cayley algebra $\bar C$ induces $\bZ_2^5$-gradings on the Albert algebra $\bA(\bar C)$ and on its Lie algebra of derivations of respective types $(24,0,1)$ and $(24,0,0,7)$.

\item A standard $\bZ_3^2$-grading on an Okubo algebra $\calO$ induces $\bZ_3^3$-gradings on the Albert algebra $\bA(\calO)$ and on its Lie algebra of derivations of respective types $(27)$ and $(0,26)$.
\end{enumerate}
\end{proposition}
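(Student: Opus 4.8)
The statement repackages the explicit constructions carried out in the paragraphs immediately preceding it, so the plan is first to verify that the combined gradings really are gradings, and then to perform the dimension count. For the compatibility: the $\bZ_2^3$-grading of $\bar C$ (respectively the standard $\bZ_3^2$-grading of $\calO$) descends to a grading of the Albert algebra $\bA(S)$ through the multiplication rules \eqref{eq:Jniceproductbis}, placing $k^3$ in degree $0$ and each $\iota_i(S_\mu)$ in degree $\mu$; this is a grading precisely because $S_\mu*S_\nu\subseteq S_{\mu+\nu}$ and $n(S_\mu,S_\nu)=0$ unless $\mu+\nu=0$. In the para-Cayley case this $\bZ_2^3$-grading commutes with the $\bZ_2^2$-grading of $\bA(\bar C)$ (the three nonzero $\bZ_2^2$-components $\iota_i(\bar C)$ are themselves $\bZ_2^3$-graded), so the two combine into a $\bZ_2^5$-grading; in the Okubo case the order $3$ automorphism $\theta$ preserves the $\bZ_3^2$-grading of $\bA(\calO)$ (it only cyclically permutes the $\iota_i$), so, $\omega$ being available, its eigenspace decomposition refines that grading to a $\bZ_3^3$-grading. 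Transporting everything through the isomorphisms $\Phi\colon\frg(k,S)\to\der\bA(S)$ of \eqref{eq:PhigkSderJ}, $\pi_0\colon\tri(S)\to\fro(S,n)$, and the identification \eqref{eq:oCnZ23}, one obtains the corresponding gradings on $\der\bA(\bar C)$ and $\der\bA(\calO)$.

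The second step is the count for the two Albert-algebra gradings, read straight off \eqref{eq:Jniceproductbis}. For part (1): $\bar C$ is para-Cayley and the $\bZ_2^3$-grading of Theorem \ref{th:gradingsCayley}(3) has all eight homogeneous components one-dimensional, so $k^3$ survives as a single $3$-dimensional component in degree $0\in\bZ_2^5$ while $\iota_0(\bar C)$, $\iota_1(\bar C)$, $\iota_2(\bar C)$ split into $8+8+8=24$ one-dimensional components — type $(24,0,1)$. For part (2): the standard $\bZ_3^2$-grading of $\calO$ has $\calO_{(\bar 0,\bar 0)}=0$ and eight one-dimensional components, so $\bA(\calO)_\mu=\oplus_i\iota_i(\calO_\mu)$ is $3$-dimensional for $\mu\ne 0$ and $\theta$ splits it into three one-dimensional eigenspaces, while $\theta$ splits $k^3$ into $k1$, $k(e_0+\omega^2e_1+\omega e_2)$, $k(e_0+\omega e_1+\omega^2e_2)$; altogether $24+3=27$ one-dimensional components — type $(27)$.

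The third step is the count for the two Lie algebras. For $\der\bA(\bar C)$, its $\bZ_2^2$-degree-$(\bar 0,\bar 0)$ part is $\tri(\bar C)\cong\fro(C,n)$ with the $\bZ_2^3$-grading \eqref{eq:oCnZ23}, for which $\fro(C,n)_0=0$ and each $\fro(C,n)_\mu$ with $\mu\ne 0$ is a Cartan subalgebra of dimension $4$ (already established in proving Proposition \ref{pr:Z23D4}), contributing $7$ components of dimension $4$; the three off-diagonal parts $D_i(\bar C)\cong\iota_i(k\otimes\bar C)$ are isomorphic as graded spaces to $\bar C$ and hence each split into $8$ one-dimensional components, giving $24$ more — type $(24,0,0,7)$, with $7\cdot 4+24=52=\dim F_4$ as a check. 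For $\der\bA(\calO)$, the $\tri(\calO)$ part carries the $\bZ_3^3$-grading of Proposition \ref{pr:Z33o8} of type $(24,2)$ — namely $\fro(\calO,n)_{(\bar 0,\bar 0,\bar 0)}=0$, two $2$-dimensional components in degrees $(\bar 0,\bar 0,\bar\jmath)$, and $24$ one-dimensional components in degrees $(\mu,\bar\jmath)$ with $0\ne\mu\in\bZ_3^2$ — while the complement $\oplus_i D_i(\calO)\cong\oplus_i\iota_i(k\otimes\calO)$ splits, exactly as in the Albert computation, into $24$ one-dimensional components occupying precisely those same $24$ degrees; hence those $24$ degrees acquire dimension $2$, the two degrees $(\bar 0,\bar 0,\bar\jmath)$ retain dimension $2$, and $(\bar 0,\bar 0,\bar 0)$ remains zero — type $(0,26)$, with $26\cdot 2=52$ as a check.

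The step where real care is needed is the last one: one must verify that the off-diagonal part $\oplus_i D_i(S)$ splits under the combined grading in the same pattern as the corresponding part of $\bA(S)$, and — crucially for the Okubo case — that its homogeneous degrees coincide with, rather than merely overlap, the degrees already occurring in the $\tri$-part, so that the component dimensions add up to exactly $2$ and nothing is left over. This rests on the explicit actions \eqref{eq:Diaaction} and \eqref{eq:Theta} (the automorphism merely permuting the $\iota_i$), together with the facts — recorded while proving Propositions \ref{pr:Z23D4} and \ref{pr:Z33o8} — that in characteristic $\ne 2,3$ the Killing form is nondegenerate, so the $\omega$- and $\omega^2$-eigenspaces pair up, and that $\fro(C,n)=\der C\oplus L_{C_0}\oplus R_{C_0}$ decomposes $\fro(C,n)$ into the adjoint module and two copies of the seven-dimensional irreducible $\der C$-module.
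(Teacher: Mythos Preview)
Your proposal is correct and follows essentially the same approach as the paper: the proposition is stated there as a summary of the explicit homogeneous-space descriptions worked out in the preceding paragraphs, and you have reproduced that computation faithfully, invoking Propositions \ref{pr:Z23D4} and \ref{pr:Z33o8} for the $\tri$-part and reading the off-diagonal part directly from \eqref{eq:Diaaction} and the action of $\Theta$. One small organizational remark: in your final paragraph, the Killing-form nondegeneracy and the decomposition $\fro(C,n)=\der C\oplus L_{C_0}\oplus R_{C_0}$ are ingredients in the proofs of Propositions \ref{pr:Z23D4} and \ref{pr:Z33o8} (which you have already cited for the $\tri$-part), not tools needed separately for the off-diagonal verification; that verification is immediate from the fact that $\Theta$ cyclically permutes the $\iota_i$ and each $\iota_i(S_\mu)$ is one-dimensional.
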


\medskip

\begin{remark}
\null\quad
\begin{enumerate}
\item For an algebraically closed field of characteristic $0$, the gradings obtained in the previous Proposition are among the four fine gradings on either the Albert algebra or the exceptional simple Lie algebra of type $F_4$ considered in \cite{DraperMartinF4}.

\item A $\bZ_2^3$-grading on a para-Cayley algebra $\bar C$ over a field of characteristic $\ne 2,3$ containing the cubic roots of $1$ also induces $\bZ_2^3\times \bZ_3$-gradings on $\bA(\bar C)$ and on $\frg(k,\bar C)\simeq \der \bA(\bar C)$ of types $(21,3)$ and $(3,14,7)$. However these gradings are not fine. Similarly, a standard $\bZ_3^2$-grading on an Okubo algebra $\calO$ induces $\bZ_3^2\times \bZ_2^2$-gradings on $\bA(\calO)$ and $\frg(k,\calO)\simeq\der\bA(\calO)$ of types $(24,0,1)$ and $(24,0,8,1)$. The unique four dimensional homogeneous space in $\frg(k,\calO)$ is the one corresponding to the neutral element $\frg(k,\calO)_0$, which is a Cartan subalgebra (inside $\tri(\calO)\simeq \fro(\calO,n)$. It turns out then that, over an algebraically closed field, this grading can be refined to the $\bZ^4$-grading given by the roots relative to this Cartan subalgebra and, in the same vein, the grading on $\bA(\calO)$ can be refined to the $\bZ^4$-grading given by the weights of $\bA(\calO)$ relative to the action of $\frg(k,\calO)\simeq\der\bA(\calO)$.

\item For an algebraically closed field $k$ of characteristic $0$, Draper and Mart\'{\i}n \cite{DraperMartinF4} have shown that there are exactly four fine gradings of the exceptional simple Lie algebra of type $F_4$. These are the Cartan grading ($\bZ^4$-grading given by the roots relative to a Cartan subalgebra), the $\bZ_2^5$ and $\bZ_3^3$-gradings in Proposition \ref{pr:gradingsF4}, and a further $\bZ_2^3\times\bZ$-grading. A concrete description of this latter grading can be obtained using the ingredients here: Given a para-Cayley algebra $\bar C$ with a $\bZ_2^3$-grading, consider again the Jordan algebra $J=\bA(\bar C)$ and its Lie algebra of derivations $\der J\simeq \tri(\bar C)\oplus\bigl(\oplus_{i=0}^2D_i(\bar C)\bigr)$.
    The derivation $D_0(1)$ acts on $J$ as follows:
    $D_0(1)(e_0)=0$, $D_0(1)(e_1)=\frac{1}{2}\iota_0(1)$,
    $D_0(1)(e_2)=-\frac{1}{2}\iota_0(1)$,
    and $D_0(1)(\iota_0(1))=4(-e_1+e_2)$, $D_0(1)(\iota_0(x))=0$ if $\bar x=-x$, $D_0(1)(\iota_1(x))=-\iota_2(\bar x)$, and $D_0(1)(\iota_2(x))=\iota_1(\bar x)$. That is,
    \[
    \begin{split}
    &D_0(1)(e_0)=0=D_0(1)(e_1+e_2),\\
    &D_0(1)(e_1-e_2)=\iota_0(1),\ D_0(1)(\iota_0(1))=-4(e_1-e_2),\\
    &D_0(1)(\iota_0(x)=0\ \text{if $\bar x=-x$,}\\
    &D_0(1)(\iota_1(x))=-\iota_2(\bar x),\ D_0(1)(\iota_2(x))=\iota_1(\bar x),
    \end{split}
    \]
    so that, assuming $\sqrt{-1}\in k$, $D_0(1)$ acts with eigenvalues $0,\pm\sqrt{-1},\pm2\sqrt{-1}$, thus inducing a $\bZ$-grading on $\bA(\bar C)$, and hence another one on $\der\bA(\bar C)$ too, which is compatible with the $\bZ_2^3$ grading induced by the grading on $\bar C$. Thus they combine to give  $\bZ_2^3\times \bZ$-gradings on $\bA(\bar C)$ and $\der\bA(\bar C)$ of types $(25,1)$ and $(31,0,7)$.
\end{enumerate}

\end{remark}

\medskip

Let us have a closer look at the grading of type $(0,26)$ in the Lie algebra $\der\bA(\calO)\simeq \frg(k,\calO)$ induced by the standard $\bZ_3^2$-grading in $\calO$ and the order $3$ automorphism $\Theta$ in \ref{eq:Theta}.

Consider two elements $0\ne \mu,\nu\in \bZ_3^2$ with $\nu\ne\pm\mu$, and let $0\ne x\in \calO_\mu$, $0\ne y\in \calO_\nu$. Then $x$ and $y$ are in the situation of Theorem \ref{th:bonito}. In particular, either $x*y=0$ or $y*x=0$. Assume, without loss of generality, that $x*y=0$. Let $\frt=\tri(\calO,*,n)$ and consider the $\bZ_3^3$-grading induced on $\frt\simeq \fro(\calO,n)$ of type $(24,2)$ in Proposition \ref{pr:Z33o8}. The one-dimensional homogeneous space $\frt_{(\mu+\nu,\bar \jmath)}$ is spanned by the element
\[
(1+\omega^{2j}\theta+\omega^j\theta^2)(t_{x,y})=(d_0,d_1,d_2),
\]
where
\begin{equation}\label{eq:tmunuj}
\begin{split}
d_0&=\sigma_{x,y}
  +\omega^j(\frac{1}{2}n(x,y)1-r_xl_y)+\omega^{2j}(\frac{1}{2}n(x,y)1-l_xr_y),\\
d_1&=\omega^{2j}\sigma_{x,y}
  +(\frac{1}{2}n(x,y)1-r_xl_y)+\omega^j(\frac{1}{2}n(x,y)1-l_xr_y),\\
d_2&=\omega^j\sigma{x,y}
  +\omega^{2j}(\frac{1}{2}n(x,y)1-r_xl_y)+(\frac{1}{2}n(x,y)1-l_xr_y).
\end{split}
\end{equation}

\begin{lemma}\label{le:toral}
With $x,y$ as above, the endomorphism of $\calO$ given by
\[
\epsilon\sigma_{x,y}+\delta(\frac{1}{2}n(x,y)1-r_xl_y)+\gamma(\frac{1}{2}n(x,y)1-l_xr_y)
\]
is semisimple for any $0\ne \epsilon,\delta,\gamma\in k$. Moreover, the kernel of this endomorphism is the subalgebra generated by $y*x$.

In particular, the endomorphisms $d_0,d_1,d_2$ in \eqref{eq:tmunuj} are semisimple.
\end{lemma}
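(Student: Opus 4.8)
The plan is to pass to the explicit model and read everything off Table~\ref{ta:Oalphabeta}. Since $x*y=0$ and $x,y$ satisfy the hypotheses of Theorem~\ref{th:bonito} (their norms vanish because $\calO_0=0$, and $n(\alg{x},\alg{y})=0$ because $\pm\mu\ne\pm\nu$), Corollary~\ref{co:bonito} lets me assume $(\calO,*,n)=\calO_{\alpha,\beta}$ with its standard $\bZ_3^2$-grading, $x=-x_{1,0}$ and $y=-x_{0,1}$, so that the degrees of $x$ and $y$ are $\mu=(\bar1,\bar0)$ and $\nu=(\bar0,\bar1)$. As $\nu\ne-\mu$, the components $\calO_\mu$ and $\calO_\nu$ are $n$-orthogonal, so $n(x,y)=0$ and the operator to be studied is simply
\[
E=\epsilon\,\sigma_{x,y}-\delta\,r_xl_y-\gamma\,l_xr_y .
\]
The first key observation is that $\sigma_{x,y}$, $r_xl_y$ and $l_xr_y$ each carry $\calO_a$ into $\calO_{a+g}$ for every $a$, where $g:=\mu+\nu=(\bar1,\bar1)$; hence $E$ is homogeneous of degree $g$. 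Since $g$ has order $3$ in $\bZ_3^2$ and $\calO_0=0$, the coset decomposition $\bZ_3^2=\langle g\rangle\sqcup(a_1+\langle g\rangle)\sqcup(a_2+\langle g\rangle)$ yields an $E$-invariant splitting $\calO=W\oplus V_1\oplus V_2$, with $W=\calO_g\oplus\calO_{2g}$ two-dimensional and each $V_i=\calO_{a_i}\oplus\calO_{a_i+g}\oplus\calO_{a_i+2g}$ three-dimensional.

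Next I would analyse the blocks. For $W$: a table computation gives $y*x=x_{0,1}*x_{1,0}=x_{1,1}$ and $(y*x)*(y*x)=-(\alpha\beta)x_{-1,-1}$, so $W=\espan{x_{1,1},x_{-1,-1}}=\alg{y*x}$; moreover $\sigma_{x,y}(x_{1,1})=r_xl_y(x_{1,1})=l_xr_y(x_{1,1})=0$, and $E(\calO_{2g})\subseteq\calO_{3g}=\calO_0=0$, whence $E|_W=0$. For each $V_i$: here $E$ permutes the three lines $\calO_{a_i}\to\calO_{a_i+g}\to\calO_{a_i+2g}\to\calO_{a_i}$ cyclically, and on each of the six lines spanning $V_1\oplus V_2$ exactly one of the three summands of $E$ is nonzero --- $\sigma_{x,y}$ acts nontrivially only on $\calO_{-\mu}$ and $\calO_{-\nu}$, with coefficient a nonzero value of the polar form, while on the other four lines one of $r_xl_y,l_xr_y$ acts with coefficient $\pm\delta\alpha$ or $\pm\gamma\beta$. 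Thus each of the three maps is an isomorphism for every choice of nonzero $\epsilon,\delta,\gamma$, so $E|_{V_i}$ is invertible and $(E|_{V_i})^3$ acts as a nonzero scalar $c_i$. Since $\charac k\ne3$, the polynomial $X^3-c_i$ is separable, so $E|_{V_i}$ is semisimple.

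Finally I would assemble the pieces: $E$ is block-diagonal with one zero block and two semisimple blocks whose minimal polynomials are prime to $X$, hence $E$ is semisimple, and $\ker E=W=\alg{y*x}$. The operators $d_0,d_1,d_2$ of \eqref{eq:tmunuj} are, using $n(x,y)=0$, precisely the operators $E$ with $(\epsilon,\delta,\gamma)$ equal to $(1,\omega^j,\omega^{2j})$, $(\omega^{2j},1,\omega^j)$ and $(\omega^j,\omega^{2j},1)$, all of whose entries are nonzero, so they are semisimple as well. The only genuine work is the bookkeeping in the block analysis; the one structural idea --- that a degree-$g$ operator on a $\bZ_3^2$-graded space with trivial $0$-component automatically splits off a two-dimensional null block (forced here to be $\alg{y*x}$) and complementary three-dimensional cyclic blocks whose cubes are scalars --- is what makes semisimplicity essentially automatic. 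The mild point to check carefully is that no cancellation among the three summands can occur on the six lines in $V_1\oplus V_2$ for any nonzero $\epsilon,\delta,\gamma$; this is immediate from Table~\ref{ta:Oalphabeta} once the degrees are tracked, since exactly one summand is nonzero on each such line.
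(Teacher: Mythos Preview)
Your proof is correct and follows essentially the same approach as the paper: both pass to the explicit model $\calO_{\alpha,\beta}$ via Corollary~\ref{co:bonito}, compute the action of $E$ on the standard homogeneous basis, and find two three-cycles on which $E^3$ acts as the scalars $\pm\epsilon\delta\gamma\alpha\beta$ together with the two-dimensional kernel $\alg{y*x}=\espan{x_{1,1},x_{-1,-1}}$, concluding semisimplicity from the separability of $X(X^3-c)(X^3+c)$ in characteristic $\ne 3$. Your coset-of-$\langle g\rangle$ framing is a pleasant conceptual gloss on what in the paper is a direct matrix computation in the basis \eqref{eq:basisxy}, but the underlying calculation is the same.
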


\begin{proof}
Let $\alpha=n(x,x*x)$ and $\beta=n(y*y)$, and consider the basis $\{x,x*x,y,y*y,y*x,(y*y)*(x*x),x*(y*y),(x*x)*^y\}$ as in \eqref{eq:basisxy}. The multiplication table in this basis is given by Table \ref{ta:Oalphabeta} with $x=-x_{1,0}$ and $y=-x_{0,1}$. Then the coordinate matrix of $\epsilon\sigma_{x,y}+\delta(\frac{1}{2}n(x,y)1-r_xl_y)+\gamma(\frac{1}{2}n(x,y)1-l_xr_y)$ is
\[
\left( \begin{tabular}{cc|cc|cc|cc}
 $0$&$0$&$0$&$-\epsilon\beta$&$0$&$0$&$0$&$0$\\
 $0$&$0$&$0$&$0$&$0$&$0$&$-\delta\beta$&$0$\\ \hline
 $0$&$\epsilon\alpha$&$0$&$0$&$0$&$0$&$0$&$0$\\
 $0$&$0$&$0$&$0$&$0$&$0$&$0$&$\gamma\alpha$\\ \hline
 $0$&$0$&$0$&$0$&$0$&$0$&$0$&$0$\\
 $0$&$0$&$0$&$0$&$0$&$0$&$0$&$0$\\ \hline
 $0$&$0$&$-\gamma$&$0$&$0$&$0$&$0$&$0$\\
 $\delta$&$0$&$0$&$0$&$0$&$0$&$0$&$0$\\
 \end{tabular} \right)
\]
and it follows that (renaming the basic elements as $e_1,\ldots,e_8$) the given endomorphism acts as follows:
\[
\begin{split}
&e_1\mapsto \delta e_3\mapsto \delta\gamma\alpha e_4\mapsto
   -\epsilon\delta\gamma\alpha\beta e_1,\\
&e_2\mapsto \epsilon\alpha e_3\mapsto -\epsilon\gamma\alpha e_7\mapsto
    \epsilon\delta\gamma\alpha\beta e_2,\\
&e_5,e_6\mapsto 0.
\end{split}
\]
Thus the minimal polynomial is $(X^3+\epsilon\delta\gamma\alpha\beta)(X^3-\epsilon\delta\gamma\alpha\beta)$ and, since the characteristic is assumed to be $\ne 3$, this polynomial is separable. The result follows now easily.
\end{proof}

\medskip

\begin{corollary}\label{co:tmutoral}
Under the conditions above, the homogenous subspaces $\frt_\mu$ in the Lie algebra $\frg(k,\calO)\simeq \der\bA(\calO)$, $\mu=(\bar\jmath_1,\bar\jmath_2,\bar\jmath_3)\in \bZ_3^3$, act semisimply on the subspace $\oplus_{i=0}^2\iota_i(k\otimes\calO)$.

For $(\bar\jmath_1,\bar\jmath_2)\ne (\bar 0,\bar 0)$, the kernel of the action of $\frt_\mu$ is $\oplus_{i=0}^2\iota_i\bigl(k\otimes(\calO_{(\bar\jmath_1,\bar\jmath_2)}\oplus
\calO_{(-\bar\jmath_1,-\bar\jmath_2)})\bigr)$.
\end{corollary}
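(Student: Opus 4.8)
The plan is to run everything through the explicit action of $\tri(\calO)$ on the three copies $\iota_i(k\otimes\calO)$ inside $\frg(k,\calO)$ and then quote Lemma~\ref{le:toral}. Recall that $\frg(k,\calO)=\tri(\calO)\oplus\bigl(\oplus_{i=0}^2\iota_i(k\otimes\calO)\bigr)$, so $\frt=\tri(\calO)$, and a triple $t=(d_0,d_1,d_2)\in\frt$ acts on $\iota_i(k\otimes\calO)$ — identified with $\calO$ via $\iota_i(1\otimes a)\leftrightarrow a$ — as the endomorphism $d_i$. The first thing I would record is a purely formal fact: by definition of the $\bZ_3^3$-grading, every homogeneous component $\frt_\mu$ with $\mu=(\bar\jmath_1,\bar\jmath_2,\bar\jmath_3)$ lies in the $\omega^{\jmath_3}$-eigenspace of the order~$3$ automorphism $\theta$ of \eqref{eq:theta}; since $\theta(d_0,d_1,d_2)=(d_2,d_0,d_1)$, the eigenvector relation forces $d_1=\omega^{2\jmath_3}d_0$ and $d_2=\omega^{\jmath_3}d_0$ for any $t=(d_0,d_1,d_2)\in\frt_\mu$. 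Consequently $t$ acts on $\oplus_{i=0}^2\iota_i(k\otimes\calO)$, in the three copies, by cube-root-of-unity multiples of one single endomorphism $d_0$ of $\calO$; hence $\frt_\mu$ acts semisimply there if and only if each such $d_0$ is semisimple, and the kernel of $t$ is $\oplus_{i=0}^2\iota_i(k\otimes\ker d_0)$.

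Next I would treat the generic case $(\bar\jmath_1,\bar\jmath_2)\neq(\bar0,\bar0)$. Choose nonzero $\mu',\nu'\in\bZ_3^2$ with $\mu'+\nu'=(\bar\jmath_1,\bar\jmath_2)$ and $\nu'\neq\pm\mu'$; this is possible because it suffices to take $\mu'$ outside the line spanned by $(\bar\jmath_1,\bar\jmath_2)$ in $\bZ_3^2$. Pick $0\neq x\in\calO_{\mu'}$ and $0\neq y\in\calO_{\nu'}$, so that $x,y$ satisfy the hypotheses of Theorem~\ref{th:bonito}, and interchange them if necessary to assume $x*y=0$. As recalled just before Lemma~\ref{le:toral}, the one-dimensional space $\frt_\mu$ is then spanned by the triple $(1+\omega^{2\jmath_3}\theta+\omega^{\jmath_3}\theta^2)(t_{x,y})=(d_0,d_1,d_2)$ of \eqref{eq:tmunuj} (with $j=\jmath_3$), and its component $d_0$ is exactly the endomorphism of Lemma~\ref{le:toral} with $\epsilon=1$, $\delta=\omega^{\jmath_3}$, $\gamma=\omega^{2\jmath_3}$, all of which are nonzero since $\charac k\neq 3$. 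Hence $d_0$ is semisimple with $\ker d_0=\alg{y*x}$, and by the previous paragraph $\frt_\mu$ acts semisimply on $\oplus_{i=0}^2\iota_i(k\otimes\calO)$ with kernel $\oplus_{i=0}^2\iota_i(k\otimes\alg{y*x})$.

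What then remains is only to identify $\alg{y*x}$ inside the standard $\bZ_3^2$-grading of $\calO$. As $x\in\calO_{\mu'}$ and $y\in\calO_{\nu'}$ are homogeneous, $y*x\in\calO_{\mu'+\nu'}$ and $(y*x)*(y*x)\in\calO_{2(\mu'+\nu')}=\calO_{-(\mu'+\nu')}$; since $\alg{y*x}=\ker d_0$ is two dimensional by Lemma~\ref{le:toral}, these two homogeneous vectors are nonzero and span it, and because $\mu'+\nu'=(\bar\jmath_1,\bar\jmath_2)\neq(\bar0,\bar0)$ differs from its negative we get $\alg{y*x}=\calO_{(\bar\jmath_1,\bar\jmath_2)}\oplus\calO_{(-\bar\jmath_1,-\bar\jmath_2)}$, which is the asserted kernel. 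Lastly, for $(\bar\jmath_1,\bar\jmath_2)=(\bar0,\bar0)$ one necessarily has $\jmath_3\neq0$ (as $\frt_{(\bar0,\bar0,\bar0)}=0$), and for $t=(d_0,d_1,d_2)\in\frt_{(\bar0,\bar0,\bar\jmath_3)}$ the component $d_0$ lies in $\fro(\calO,n)_{(\bar0,\bar0)}$ and hence preserves each one-dimensional homogeneous component of $\calO$; so $d_0$, and with it $d_1=\omega^{2\jmath_3}d_0$ and $d_2=\omega^{\jmath_3}d_0$, is diagonal in a homogeneous basis of $\calO$, whence every $t\in\frt_\mu$ acts on $\oplus_{i=0}^2\iota_i(k\otimes\calO)$ by a block-diagonal operator with diagonalizable blocks, in particular semisimply (no kernel is claimed in this case).

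The only genuine work sits in the last paragraph — the degree bookkeeping checking that $y*x$ and $(y*x)*(y*x)$ land in the two one-dimensional components of degrees $(\bar\jmath_1,\bar\jmath_2)$ and its negative — together with the elementary observation that a $\theta$-eigenvector triple in $\frt_\mu$ has its three components proportional; all of the semisimplicity is already done inside Lemma~\ref{le:toral}, so I do not expect a real obstacle here.
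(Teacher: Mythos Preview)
Your argument is correct and follows the same route as the paper: for $(\bar\jmath_1,\bar\jmath_2)=(\bar 0,\bar 0)$ the paper simply notes that $\frt_{(\bar 0,\bar 0,\bar 1)}\oplus\frt_{(\bar 0,\bar 0,\bar 2)}$ is the natural Cartan subalgebra of $\frt\cong\fro(\calO,n)$ (your diagonality argument), and for $(\bar\jmath_1,\bar\jmath_2)\ne(\bar 0,\bar 0)$ it just invokes Lemma~\ref{le:toral}. Your version is a more detailed unpacking of the same idea --- the explicit reduction via the $\theta$-eigenvector relation $d_i=\omega^{(3-i)\jmath_3}d_0$ and the degree bookkeeping identifying $\alg{y*x}=\calO_{(\bar\jmath_1,\bar\jmath_2)}\oplus\calO_{(-\bar\jmath_1,-\bar\jmath_2)}$ are left implicit in the paper but are exactly what makes the appeal to the lemma work.
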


\begin{proof}
Note that $\frt_{(\bar 0,\bar 0,\bar 1)}\oplus\frt_{(\bar 0,\bar 0,\bar 2)}$ is the natural Cartan subalgebra of $\frt\cong \fro(\calO,n)$ in terms of the basis of $\calO$ in Table \ref{ta:Oalphabeta}. The Lemma above shows that the one-dimensional homogeneous subspaces $\frt_{(\bar\jmath_1,\bar\jmath_2,\bar\jmath_3)}$ also act semisimply for any $(\bar\jmath_1,\bar\jmath_2)\ne (\bar 0,\bar 0)$, as well as the assertion on the kernel.
\end{proof}

\medskip

Actually, the $\bZ_3^3$-grading of type $(0,26)$ on $\frg(k,\calO)$ in Proposition \ref{pr:gradingsF4}\,(2) can be extended to a $\bZ_3^3$-grading on $\frg=\frg(S_2,\calO)$ of type $E_6$, where $S_2$ is a two-dimensional symmetric composition algebra with basis $\{a,b\}$, where $n(a)=0=n(b)$, $a\bullet a=b$, $b\bullet b=\xi a$, with $\xi=n(a,a\bullet a)$, and $a\bullet b=b\bullet a=0$. The triality Lie algebra of $S_2$ is
\[
\frs=\{\bigl(\epsilon\sigma_{a,b},\delta\sigma_{a,b},\gamma\sigma_{a,b}\bigr) : \epsilon,\delta,\gamma\in k,\ \epsilon+\delta+\gamma=0\}
\]
(see \cite[Corollary 3.4]{Eld04Ibero}). Let $\sigma=\sigma_{a,b}$, the order $3$ automorphism $\theta$ grades $\frs$ over $\bZ_3$ with $\frs_{\bar 0}=0$, $\frs_{\bar 1}=\espan{(\sigma,\omega^2\sigma,\omega\sigma)}$ and $\frs_{\bar 2}=\espan{(\sigma,\omega\sigma,\omega^2\sigma)}$ (with $\omega^3=1\ne \omega$ as usual). Note that $\sigma$ acts semisimply on $S$, so that $\frs$ is a two-dimensional abelian toral subalgebra of $\frg$, that is, its elements act semisimply on $\frg$.

The homogeneous subspaces of the $\bZ_3^3$-grading on the Lie algebra $\frg=\frg(S_2,\calO)= (\frs\oplus\frt)\oplus\bigl(\oplus_{i=0}^2\iota_i(S_2\otimes\calO)\bigr)$ induced by the standard $\bZ_3^2$-grading of $\calO$ and the order $3$ automorphism $\Theta$ are the following:
\[
\begin{split}
\frg_{(\bar 0,\bar 0,\bar\jmath)}&=\frs_{\bar\jmath}\oplus \frt_{(\bar 0,\bar 0,\bar\jmath)},\quad\text{for $j=1,2$,}\\[6pt]
\frg_{(\bar\jmath_1,\bar\jmath_2,\bar\jmath_3)}&=
 \frt_{(\bar\jmath_1,\bar\jmath_2,\bar\jmath_3)}\\
 &\qquad \oplus\espan{\iota(u\otimes x)+\omega^{2j_3}\iota_1(u\otimes x)+\omega^{j_3}\iota_2(u\otimes x): u\in S_2, x\in\calO_{(\bar\jmath_1,\bar\jmath_2)}},\\
 &\qquad\qquad\qquad \text{for $(\bar\jmath_1,\bar\jmath_2)\ne(\bar 0,\bar0)$.}
\end{split}
\]
For any $u\in S_2$, $x\in\calO$, and $j=0,1,2$, denote by $\Gamma_j(u\otimes x)$ the element $\iota_0(u\otimes x)+\omega^{2j}\iota_1(u\otimes x)+\omega^j\iota_2(u\otimes x)$, and for any $D=(d_0,d_1,d_2)\in \frt$, denote by $T_j(D)$ the triple $D+\omega^{2j}\theta(D)+\omega^j\theta^2(D)$ (and similarly for $D\in \tri(S_2)=\frs$).

For $(\bar\jmath_1,\bar\jmath_2)\ne (\bar 0,\bar 0)$, $0\ne u\in S_2$ and $0\ne x\in \calO_{(\bar\jmath_1,\bar\jmath_2)}$, the element $\Gamma_j(u\otimes x)$ is a nonzero homogeneous element in $\frg_{(\bar\jmath_1,\bar\jmath_2,\bar\jmath)}$. Also, for any nonzero homogeneous elements $x\in \calO_{(\bar\imath_1,\bar\imath_2)}$ and $y\in\calO_{(\bar\jmath_1,\bar\jmath_2)}$, the element $T_j(t_{x,y})$ is a nonzero homogeneous element in $\frt_{(\bar\imath_1+\bar\jmath_1,\bar\imath_2+\bar\jmath_2,\bar\jmath)}$.

Note that for $0\ne u,v\in S_2$ and $0\ne x,y\in \calO$, and $i,j=0,1,2$:
\[
\begin{split}
[\Gamma_i&(u\otimes x),\Gamma_j(v\otimes y)]\\
 &=[\iota_0(u\otimes x)+\omega^{2i}\iota_1(u\otimes x)
  +\omega^i\iota_2(u\otimes x),
  \iota_0(v\otimes y)+\omega^{2j}\iota_1(v\otimes y)+
  \omega^j\iota_2(v\otimes y)]\\
 &=n(x,y)\bigl(t_{u,v}+\omega^{2(i+j)}\theta(t_{u,v})+\omega^{i+j}\theta^2(t_{u,v})\bigr)\\
  &\quad+n(u,v)\bigl(t_{x,y}+\omega^{2(i+j)}\theta(t_{x,y})+\omega^{i+j}\theta^2(t_{x,y})\bigr)\\
  &\quad+\omega^{2i+j}\iota_0(u\bullet v\otimes x*y)-\omega^{i+2j}\iota_0(v\bullet u\otimes y*x)\\
  &\quad+\omega_i\iota_1(u\bullet v\otimes x*y)-\omega^j\iota_1(v\bullet u\otimes y*x)\\
  &\quad+\omega^{2j}\iota_2(u\bullet v\otimes x*y)-\omega^{2i}\iota_2(v\bullet u\otimes y*x)\\[2pt]
 &=n(x,y)T_{i+j}(t_{u,v})+n(u,v)T_{i+j}(t_{x,y})
   +\Gamma_{i+j}(u\bullet v\otimes (\omega^{2i+j}x*y-\omega^{i+2j}y*x)),
\end{split}
\]
since $(S_2,\bullet)$ is commutative.

We want to check that for any $0\ne \mu\in \bZ_3^3$
\begin{center}
$\frg_\mu\oplus\frg_{-\mu}$ is a Cartan subalgebra of $\frg$.
\end{center}

This is clear for $\mu=\pm(\bar 0,\bar 0,\bar 1)$, as $\frg_\mu\oplus\frg_{-\mu}=\frs\oplus \frt_{(\bar 0,\bar 0,\bar 1)}\oplus \frt_{(\bar 0,\bar 0,\bar 2)}$ is the natural Cartan subalgebra of $\frs\oplus\frt\simeq \frs\oplus\fro(\calO,n)$. Now, consider the element $\mu=(\bar\imath_1,\bar\imath_2,\bar\imath)$ with $(\bar\imath_1,\bar\imath_2)\ne (\bar 0,\bar 0)$. Let $\hat\mu=(\bar\imath_1,\bar\imath_2)$ and $0\ne x\in \calO_{\hat \mu}$, so $0\ne x*x\in \calO_{-\hat\mu}$. Then,
\[
\frg_\mu\oplus\frg_{-\mu}=\bigl(\frt_\mu\oplus \Gamma_i(S_2\otimes x)\bigr)\oplus \bigl(\frt_{-\mu}\oplus \Gamma_{-i}(S_2\otimes (x*x))\bigr).
\]
But $\frt_\mu$ and $\frt_{\mu}$ annihilate $\oplus_{i=0}^2\iota_i\bigl(S_2\otimes(\calO_{\hat\mu}\oplus\calO_{-\hat\mu})\bigr)$ and, in particular, they annihilate $\Gamma_{\bar\imath}(S_2\otimes x)$ and $\Gamma_{-\bar\imath}(S_2\otimes (x*x))$. This shows that $[\frg_\mu,\frg_\mu]=0=[\frg_{-\mu},\frg_{-\mu}]$. Since $[\frg_\mu,\frg_{-\mu}]\subseteq \frg_0=0$, it follows that $\frg_\mu\oplus\frg_{-\mu}$ is an abelian subalgebra of $\frg$.

On the other hand, $\frt_\mu$ and $\frt_{-\mu}$ act semisimply on $\oplus_{i=0^2}\iota_i(S_2\otimes\calO)$ by Corollary \ref{co:tmutoral}, and this subspace generates $\frg$, so they act semisimply on $\frg$. Also, for $\hat\nu=(\bar\jmath_1,\bar\jmath_2)\ne(\bar 0,\bar 0)\ne \pm\hat\mu$ and $0\ne y\in \calO_{\hat\nu}$, either $x*y=0$ or $y*x=0$, but not both (Theorem \ref{th:bonito}), and
\[
[\Gamma_i(a\otimes x),\Gamma_j(a\otimes y)]=\Gamma_{i+j}\bigl(b\otimes(\omega^{2i+j}x*y-\omega^{i+2j}y*x)\bigr)\ne 0,
\]
\[
\begin{split}
[\Gamma_i(a\otimes x),
 &\Gamma_{i+j}(b\otimes(\omega^{2i+j}x*y-\omega^{i+2j}y*x))] \\
 &=T_{2i+j}(t_{x,\omega^{2i+j}x*y-\omega^{i+2j}y*x}\in T_{2i+j}(t_{\calO_{\hat\mu,},\calO_{\hat\nu+\hat\nu}}),
\end{split}
\]
and
\[
0\ne [T_{2i+j}(t_{\calO_{\hat\mu},\calO_{\hat\mu+\hat\nu}}),\Gamma_i(a\otimes x)]\subseteq \Gamma_j(a\otimes\calO_{\hat\nu})=k\Gamma_j(a\otimes y)\subseteq \frg_{(\bar\jmath_1,\bar\jmath_2,\bar\jmath)},
\]
since Lemma \ref{le:toral} shows that $[T_{2i+j}(D),\Gamma_i(a\otimes x)]$ is not $0$ for $D$ any endomorphism of the form $\epsilon\sigma_{x,z}+\delta(\frac{1}{2}n(x,z)1-r_xl_z)+\gamma(\frac{1}{2}n(x,z)1-l_xr_z)$ for $0\ne z\in \calO_{\hat\mu+\hat\nu}$. Hence $\ad_{\Gamma_i(a\otimes x)}^3(\Gamma_j(a\otimes y))$ is a nonzero scalar multiple of $\Gamma_j(a\otimes y)$ and, therefore, $\ad_{\Gamma_i(a\otimes x)}$ acts in a semisimple way on $\Gamma_j(a\otimes \calO_{\hat\nu})\oplus\Gamma_{i+j}(b\otimes \calO_{\hat\mu+\hat\nu})\oplus T_{2i+j}(t_{\calO_{\hat\mu},\calO_{\hat\mu+\hat\nu}})$. Hence $\ad_{\Gamma_i(a\otimes x)}$ acts semisimply on the subspace
\[
\Bigl(\sum_{j=0}^2\sum_{\hat\nu\ne\pm\hat\mu}\Gamma_j(S_2\otimes \calO_{\hat\nu}\Bigr)
\oplus\Bigl(\sum_{j=0}^2\sum_{\hat\nu\ne\pm\hat\mu}
 T_j\bigl(t_{\calO_{\hat\mu},\calO_{\hat\nu}}\bigr)\Bigr).
\]
Since $\sum_{\nu\ne\pm\mu}\Bigl(\iota_0(S_2\otimes \calO_{\hat\nu})
\oplus\iota_1(S_2\otimes \calO_{\hat\nu})\oplus\iota_2(S_2\otimes \calO_{\hat\nu})\Bigr)$ is contained in this subspace, and it generates the whole Lie algebra $\frg$, it follows that $\ad_{\Gamma_i(a\otimes x)}$ acts semisimply on $\frg$.

The same argument works with $\ad_{\Gamma_i(b\otimes x)}$, $\ad_{\Gamma_i(a\otimes x*x)}$ and $\ad_{\Gamma_i(b\otimes x*x)}$. Therefore, $\frg_\mu\oplus\frg_{-\mu}$ is an abelian toral subalgebra of $\frg$.

Note that in characteristic $0$, the fact that $\frg_0=0$ already implies that $\frg_\mu$ consists of semisimple elements (\cite[Chapter 3, Corollary to Theorem 3.4]{LGLAIII}), so the arguments above are not necessary.

The next result summarizes the previous work:

\begin{theorem}\label{th:toral}
The $\bZ_3^3$-grading of type $(0,26)$ of the simple Lie algebra $\frg(k,\calO)$ of type $F_4$ and the $\bZ_3^3$-grading of type $(0,0,26)$ of the simple Lie algebra $\frg(S_2,\calO)$ of type $E_6$ satisfy that $\frg_0=0$ and that $\frg_\mu\oplus\frg_{-\mu}$ is a Cartan subalgebra for any $0\ne \mu\in \bZ_3^3$.
\end{theorem}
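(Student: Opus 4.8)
The plan is to prove the two assertions $\frg_0 = 0$ and "$\frg_\mu \oplus \frg_{-\mu}$ is a Cartan subalgebra for $0 \ne \mu$" in parallel for both gradings, since the $E_6$ case is built on top of the $F_4$ case by adjoining the two-dimensional toral piece $\frs$. First I would record the explicit homogeneous decomposition already displayed in the text: $\frg_{(\bar 0,\bar 0,\bar\jmath)} = \frs_{\bar\jmath}\oplus\frt_{(\bar 0,\bar 0,\bar\jmath)}$ for $j=1,2$, and for $\hat\mu = (\bar\jmath_1,\bar\jmath_2)\ne(\bar 0,\bar 0)$, $\frg_{(\hat\mu,\bar\jmath)} = \frt_{(\hat\mu,\bar\jmath)}\oplus\Gamma_j(S_2\otimes\calO_{\hat\mu})$. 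The vanishing $\frg_0 = 0$ is then immediate: in $\frt\cong\fro(\calO,n)$ the piece $\frt_{(\bar 0,\bar 0,\bar 0)}$ vanishes by Proposition \ref{pr:Z33o8}, $\frs_{\bar 0} = 0$ by the description of $\frs$ after Corollary \ref{co:tmutoral}, and the $\iota$-part sits in degrees with $\hat\mu\ne 0$ or with a nonzero $\bZ_3$-component coming from $\Theta$; so for the $F_4$ grading this is Proposition \ref{pr:gradingsF4}(2) (type $(0,26)$ means no degree-$0$ component), and for $E_6$ one just adds that $\frs_{\bar 0}=0$.

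Next I would handle the abelian property. For $\mu = \pm(\bar 0,\bar 0,\bar 1)$ this is clear since $\frg_\mu\oplus\frg_{-\mu} = \frs\oplus\frt_{(\bar 0,\bar 0,\bar 1)}\oplus\frt_{(\bar 0,\bar 0,\bar 2)}$ is (the image under $\pi_0$ of) the natural Cartan subalgebra of $\frs\oplus\fro(\calO,n)$ built from the weight-zero part and the $\theta$-eigenspaces, using the basis of $\calO$ in Table \ref{ta:Oalphabeta}. For $\mu=(\hat\mu,\bar\imath)$ with $\hat\mu\ne 0$, fix $0\ne x\in\calO_{\hat\mu}$ so $0\ne x*x\in\calO_{-\hat\mu}$ and write $\frg_\mu\oplus\frg_{-\mu} = (\frt_\mu\oplus\Gamma_i(S_2\otimes x))\oplus(\frt_{-\mu}\oplus\Gamma_{-i}(S_2\otimes(x*x)))$. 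Corollary \ref{co:tmutoral} says $\frt_{\pm\mu}$ annihilates $\oplus_i\iota_i(S_2\otimes(\calO_{\hat\mu}\oplus\calO_{-\hat\mu}))$, hence annihilates both $\Gamma$-pieces; the bracket $[\Gamma_i(S_2\otimes x),\Gamma_{-i}(S_2\otimes(x*x))]$ lands in $\frg_0=0$ (and one checks the $t$-term and the $\iota$-term vanish — the $t$-term because $n(x,x*x)\ne 0$ but the commutativity of $S_2$ kills the antisymmetric $t_{u,v}$ combination, the $\iota$-term because it lies in degree $0$); similarly $[\Gamma_i(S_2\otimes x),\Gamma_i(S_2\otimes x)]=0$ as the bracket formula gives a $\Gamma$ in degree $(\bar 0,\bar 0,*)$ times a skew term in the commuting algebra $S_2$. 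And $[\frt_\mu,\frt_\mu]\subseteq\frt_0=0$. So $\frg_\mu\oplus\frg_{-\mu}$ is abelian.

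For torality I would argue that every element of $\frg_\mu$ acts semisimply on a generating subspace of $\frg$, namely $W := \oplus_i\iota_i(S_2\otimes\calO)$. The $\frt$-components act semisimply on $W$ by Corollary \ref{co:tmutoral}. For a $\Gamma$-component, the computation displayed in the text shows that $\ad_{\Gamma_i(a\otimes x)}$ maps $\Gamma_j(a\otimes\calO_{\hat\nu})$ (for $\hat\nu\ne\pm\hat\mu$) into $\Gamma_{i+j}(b\otimes\calO_{\hat\mu+\hat\nu})$, thence into $T_{2i+j}(t_{\calO_{\hat\mu},\calO_{\hat\mu+\hat\nu}})$, thence back into $\Gamma_j(a\otimes\calO_{\hat\nu})$, and Lemma \ref{le:toral} guarantees the third iterate is a nonzero scalar (the minimal polynomial $(X^3+c)(X^3-c)$ with $c\ne 0$) — so $\ad_{\Gamma_i(a\otimes x)}$ is semisimple on the span of these three pieces, hence on the sum over all $\hat\nu\ne\pm\hat\mu$ and all $j$, which contains $\sum_{\hat\nu\ne\pm\hat\mu}\oplus_i\iota_i(S_2\otimes\calO_{\hat\nu})$; the same for $\Gamma_i(b\otimes x)$, $\Gamma_i(a\otimes x*x)$, $\Gamma_i(b\otimes x*x)$, and these four span $\Gamma_i(S_2\otimes(\calO_{\hat\mu}\oplus\calO_{-\hat\mu}))$. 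Putting the pieces together, every element of $\frg_\mu\oplus\frg_{-\mu}$ acts semisimply on a set of generators of $\frg$, hence semisimply on $\frg$ (or, in characteristic $0$, one invokes \cite[Ch.~3, Cor.\ to Thm.\ 3.4]{LGLAIII}: $\frg_0=0$ forces $\frg_\mu$ toral automatically). Finally, since $\dim(\frg_\mu\oplus\frg_{-\mu})$ equals the rank of $\frg$ ($4$ for $F_4$, $6$ for $E_6$ — this follows from the type data $(0,26)$ and $(0,0,26)$: two homogeneous spaces of dimension $2$ together with the one-dimensional $\frg_\mu,\frg_{-\mu}$ account for the right total) and it is an abelian subalgebra of semisimple elements which therefore equals its own centralizer, it is a Cartan subalgebra. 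The main obstacle is the bookkeeping in the $\Gamma$-bracket chain and verifying the nonvanishing at each stage via Lemma \ref{le:toral} — in particular checking that $\hat\mu+\hat\nu\ne 0$ whenever $\hat\nu\ne\pm\hat\mu$ so that the lemma applies with $z\in\calO_{\hat\mu+\hat\nu}$, and confirming the dimension count that pins down the rank.
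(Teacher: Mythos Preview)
Your proposal is correct and follows essentially the same route as the paper: the abelian property via $\frg_0=0$ and Corollary \ref{co:tmutoral}, torality via the three-step $\Gamma$-chain controlled by Lemma \ref{le:toral}, and the characteristic-$0$ shortcut via \cite{LGLAIII}. Two small remarks: first, the paper obtains the $F_4$ statement from the $E_6$ statement by \emph{restriction}, observing that $k$ embeds in $S_2$ (take $\xi=1$) as a symmetric composition subalgebra, so $\frg(k,\calO)\subseteq\frg(S_2,\calO)$ is a graded subalgebra and the Cartan property descends --- this is slightly slicker than redoing the argument in parallel; second, your parenthetical justification for $[\Gamma_i(S_2\otimes x),\Gamma_{-i}(S_2\otimes(x*x))]=0$ is muddled (commutativity of $S_2$ is irrelevant) --- the paper simply uses $[\frg_\mu,\frg_{-\mu}]\subseteq\frg_0=0$, which is all that is needed.
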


\begin{proof}
For $\frg(S_2,\calO)$ it follows from the previous arguments. The result for $\frg(k,\calO)$ follows by restriction, since the ground field $k$ is a subalgebra of the two-dimensional symmetric composition algebra $S_2$ as above with $\xi=1$.
\end{proof}

\medskip

\subsection{Gradings on $E_8$}

Let $(S,*,n)$ and $(S',\star,n')$ be two eight dimensional symmetric composition algebras and let $\frg=\frg(S,S')$ be the Lie algebra of type $E_8$ constructed in \eqref{eq:gSS'}.

By considering different possibilities for gradings on $S$ and $S'$ and combining these gradings with either the natural $\bZ_2^2$-grading of $\frg(S,S')$ or the $\bZ_3$-grading induced on $\frg(S,S')$ by the order $3$ automorphism $\Theta$ in \eqref{eq:Theta} (assuming the ground field contains the cubic roots of $1$), there appears a bunch of gradings on the Lie algebra $\frg(S,S')$.

Thus, for instance, by combining a $\bZ_3$-grading on a two-dimensional symmetric composition algebra $S_2$, a $\bZ_2^3$-grading on a para-Cayley algebra $S_8$, and the $\bZ_3$-grading induced by the automorphism $\Theta$ in \eqref{eq:Theta}, one gets an interesting $\bZ_2^3\times \bZ_3^2$-grading on the exceptional simple Lie algebra $\frg(S_2,S_8)$ of type $E_6$.

Many different gradings like this one can be obtained for the exceptional simple Lie algebras. We will sketch here two of these gradings for the simple Lie algebra of type $E_8$.

\medskip

Assume first that both $(S,*,n)$ and $(S',\star,n')$ are $\bZ_2^3$-graded para-Cayley algebras. Combine these gradings with the natural $\bZ_2^2$-grading on $\frg=\frg(S,S')$ to obtain a $\bZ_2^3\times\bZ_2^3\times\bZ_2^2=\bZ_2^8$-grading on $\frg$. Using our results on gradings on the orthogonal Lie algebras $\fro(S,n)$ and $\fro(S',n')$, we obtain:
\[
\begin{split}
\frg_{(\mu,0,(\bar 0,\bar 0))}&=\tri(S,*,n)_\mu\\
 &\qquad\qquad\text{(a Cartan subalgebra of $\tri(S,*,n)\simeq\fro(S,n)$ if $\mu\ne 0$),}\\
\frg_{(0,\nu,(\bar 0,\bar 0))}&=\tri(S',\star,n)_\nu\\
 &\qquad\qquad\text{(a Cartan subalgebra of $\tri(S',\star,n')\simeq\fro(S',n')$, if $\nu\ne 0$),}\\
\frg_{(\mu,\nu,(\bar 1,\bar 0))}&=\iota_0(S_\mu\otimes S_\nu),\\
\frg_{(\mu,\nu,(\bar 0,\bar 1))}&=\iota_1(S_\mu\otimes S_\nu),\\
\frg_{(\mu,\nu,(\bar 1,\bar 1))}&=\iota_2(S_\mu\otimes S_\nu),
\end{split}
\]
for any $\mu,\nu\in\bZ_2^3$, thus getting a grading of type $(192,0,0,14)$. Note that $\frg_0=0$.

\medskip

On the other hand, if both $S$ and $S'$ are Okubo algebras endowed with standard $\bZ_3^2$-gradings over a field containing the cubic roots of $1$, then $\frg=\frg(S,S')$ is naturally endowed with a $\bZ_3^2\times\bZ_3^2\times\bZ_3=\bZ_3^5$-grading where, for any $0\ne \mu,\nu\in\bZ_3^2$:
\[
\begin{split}
\frg_{(\mu,0,\bar \jmath)}&=\{(d_0,d_1,d_2)\in\tri(S,*,n)_\mu: \theta((d_0,d_1,d_2))=\omega^j(d_0,d_1,d_2)\},\\
\frg_{(0,\nu,\bar \jmath)}&=\{(d_0',d_1',d_2')\in\tri(S',\star,n')_\nu: \theta'((d_0',d_1',d_2'))=\omega^j(d_0',d_1',d_2')\},\\
\frg_{(0,0,\bar \jmath)}&=\{(d_0,d_1,d_2)\in\tri(S,*,n)_0: \theta((d_0,d_1,d_2))=\omega^j(d_0,d_1,d_2)\}\\
&\qquad\oplus \{(d_0',d_1',d_2')\in\tri(S',\star,n')_0: \theta'((d_0',d_1',d_2'))=\omega^j(d_0',d_1',d_2')\},\\
\frg_{(\mu,\nu,\bar 0)}&=k\bigl(\iota_0(a_\mu\otimes b_\nu)+
\iota_1(a_\mu\otimes b_\nu)+\iota_2(a_\mu\otimes b_\nu),\\
\frg_{(\mu,\nu,\bar 1)}&=k\bigl(\iota_0(a_\mu\otimes b_\nu)+
\omega^2\iota_1(a_\mu\otimes b_\nu)+\omega\iota_2(a_\mu\otimes b_\nu),\\
\frg_{(\mu,\nu,\bar 2)}&=k\bigl(\iota_0(a_\mu\otimes b_\nu)+
\omega\iota_1(a_\mu\otimes b_\nu)+\omega^2\iota_2(a_\mu\otimes b_\nu)\bigr),
\end{split}
\]
where $S_\mu=ka_\mu$ and $S_\nu'=kb_\nu$.

Note that again $\frg_0=0$. The type of this $\bZ_3^5$-grading is then $(240,0,0,2)$.

\medskip

Let us summarize these arguments:

\begin{proposition}
Let $k$ be a field of characteristic $\ne 2,3$ containing the cubic roots of $1$.
\begin{enumerate}
\item If $\bar C$ and $\bar C'$ are two $\bZ_2^3$-graded para-Cayley algebras, these gradings induce a $\bZ_2^8$-grading on the simple Lie algebra $\frg(\bar C,\bar C')$ of type $(192,0,0,14)$.

\item If $\calO$ and $\calO'$ are two $\bZ_3^2$-graded Okubo algebra, these gradings induce a $\bZ_3^5$-grading on the simple Lie algebra $\frg(\calO,\calO')$ of type $(240,0,0,2)$.
\end{enumerate}
\end{proposition}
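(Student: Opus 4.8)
The plan is, in each case, to check first that the indicated decomposition really is a grading over the stated group --- that it is preserved by the bracket of $\frg(S,S')$ in \eqref{eq:gSS'} --- and then to count the dimensions of the homogeneous components and read off the type from the structural facts already established. For the first step, one records that a $G$-grading on a symmetric composition algebra $S$ induces a $G$-grading on $\tri(S)$: since $\tri(S)=t_{S,S}$ (Theorem \ref{th:triality}) and $t_{x,y}$ is homogeneous of degree $\rho+\rho'$ whenever $x\in S_\rho$, $y\in S_{\rho'}$, the subspaces $\tri(S)_\mu$ (triples all of whose components shift the $S$-degree by $\mu$) decompose $\tri(S)$; the grading on $S'$ together with that on $S$ also gives the tensor-product $G$-grading on each $\iota_i(S\otimes S')$. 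In case (1) the three summands $\iota_i(S\otimes S')$ fill out the three nonzero classes of the $\bZ_2^2$-factor of \eqref{eq:Z2Z2gSS'}, and in case (2) they are cyclically permuted by the order-$3$ automorphism $\Theta$ of \eqref{eq:Theta}, hence lie in the three classes of the extra $\bZ_3$-factor. Each of the five defining bracket relations of $\frg(S,S')$ is then degree-additive; the only one deserving a remark is $[\iota_i(x\otimes x'),\iota_i(y\otimes y')]=n'(x',y')\theta^i(t_{x,y})+n(x,y)\theta'^i(t'_{x',y'})$, in which $\theta^i(t_{x,y})\in\tri(S)$ has $S$-degree the sum of those of $x,y$ and $S'$-degree $0$, while the scalar $n'(x',y')$ vanishes unless the $S'$-degrees of $x',y'$ sum to $0$; so this term has total degree the sum of those of its two factors, and symmetrically for $n(x,y)\theta'^i(t'_{x',y'})$. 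This produces the $\bZ_2^8=\bZ_2^3\times\bZ_2^3\times\bZ_2^2$-grading of part (1) and the $\bZ_3^5=\bZ_3^2\times\bZ_3^2\times\bZ_3$-grading of part (2).

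For the type in (1), I would note that the $\bZ_2^3$-grading of a para-Cayley algebra is an iterated Cayley--Dickson doubling (Theorem \ref{th:gradingsCayley}(3)), so all eight components $\bar C_\mu$, and likewise all eight $\bar C'_\nu$, are one-dimensional. Transporting the grading on $\bar C$ through the isomorphism $\pi_0:\tri(\bar C)\rightarrow\fro(C,n)$ of Theorem \ref{th:triality} yields the grading recalled just before Proposition \ref{pr:Z23D4}: the neutral component is $0$, and the seven spaces $\tri(\bar C)_\mu$ ($0\ne\mu\in\bZ_2^3$) are Cartan subalgebras of dimension $4$; similarly for $\tri(\bar C')$. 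Hence the homogeneous components of $\frg(\bar C,\bar C')$ are the $7+7=14$ four-dimensional spaces $\tri(\bar C)_\mu$ and $\tri(\bar C')_\nu$; the $3\times 8\times 8=192$ one-dimensional spaces $\iota_i(\bar C_\mu\otimes\bar C'_\nu)$; and the neutral component $\tri(\bar C)_0\oplus\tri(\bar C')_0=0$. Since $14\cdot 4+192=248=\dim\frg$, these exhaust $\frg$, and the type is $(192,0,0,14)$.

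For the type in (2), I would note that the standard $\bZ_3^2$-grading of an Okubo algebra (Theorem \ref{th:gradingsOkubo1}(iii)) has $\calO_0=0$ with all eight nonzero components one-dimensional, and likewise for $\calO'$. Refining the induced grading on $\tri(\calO)\cong\fro(\calO,n)$ by $\theta$ is exactly the situation of Proposition \ref{pr:Z33o8}: the $(\bar 0,\bar 0,\bar 0)$-component is $0$, the components $\fro(\calO,n)_{(\bar 0,\bar 0,\bar\jmath)}$ have dimension $2$ for $j=1,2$, and the other $24$ components (degrees $(\mu,\bar\jmath)$ with $0\ne\mu\in\bZ_3^2$) are one-dimensional; similarly for $\calO'$. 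Consequently in $\frg(\calO,\calO')$ the neutral component is $0$; there are two four-dimensional components $\frg_{(0,0,\bar\jmath)}=\fro(\calO,n)_{(\bar 0,\bar 0,\bar\jmath)}\oplus\fro(\calO',n')_{(\bar 0,\bar 0,\bar\jmath)}$, $j=1,2$; the $\tri$-part contributes $24+24=48$ further one-dimensional components; and each line $\iota_i(\calO_\mu\otimes\calO'_\nu)$ ($\mu,\nu\ne 0$) is split by $\Theta$ into three one-dimensional pieces, giving $3\times 8\times 8=192$ more. Since $(48+192)\cdot 1+2\cdot 4=248=\dim\frg$, these exhaust $\frg$ and the type is $(240,0,0,2)$.

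I do not expect a genuine obstacle: the bracket-compatibility is mechanical and the type computation is bookkeeping. The one delicate point is that the refinements by $\theta$ (resp.\ $\Theta$) yield components of exactly the asserted dimensions --- in particular that each three-dimensional $\tri(\calO)_\mu$, $\mu\ne 0$, really breaks into three $\theta$-eigenlines rather than sitting in a single eigenspace --- but this is precisely the content of Proposition \ref{pr:Z33o8}, so I would invoke that proposition, together with Proposition \ref{pr:Z23D4} and the vanishing of their neutral components, rather than reprove anything; the simplicity and the $E_8$-type of $\frg(S,S')$ are read off Table \ref{ta:FMS}.
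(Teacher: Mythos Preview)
Your proposal is correct and follows essentially the same route as the paper: the proposition is stated there as a summary of the preceding discussion, which combines the $\bZ_2^3$ (resp.\ standard $\bZ_3^2$) gradings on the two composition algebras with the natural $\bZ_2^2$-grading (resp.\ the $\bZ_3$-grading from $\Theta$) on $\frg(S,S')$ and then reads off the dimensions of the homogeneous pieces exactly as you do, invoking the same structural input (the $\bZ_2^3$-grading of $\fro(C,n)$ of type $(0,0,0,7)$ and Proposition~\ref{pr:Z33o8}). Your explicit check of bracket-compatibility---in particular the observation that $n'(x',y')$ vanishes unless the $S'$-degrees of $x',y'$ are opposite---is a welcome addition that the paper leaves implicit; the only cosmetic slip is the phrase ``each line $\iota_i(\calO_\mu\otimes\calO'_\nu)$ \ldots\ is split by $\Theta$ into three one-dimensional pieces'': it is the three-dimensional span $\oplus_{i=0}^2\iota_i(\calO_\mu\otimes\calO'_\nu)$ that $\Theta$ splits, but your count $3\times 8\times 8=192$ is of course correct.
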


\bigskip

Consider again the case in which both $(S,*,n)$ and $(S',\star,n')$ are $\bZ_2^3$-graded para-Cayley algebras. The projection
\[
\begin{split}
\bZ_2^8=\bZ_2^3\times\bZ_2^3\times\bZ_2^2&\longrightarrow \bZ_2^3\times\bZ_2^2=\bZ_2^5\\
(\mu,\nu,\gamma)&\mapsto (\mu+\nu,\gamma),
\end{split}
\]
provides a coarsening of the previous $\bZ_2^8$-grading of $\frg=\frg(S,S')$ to a $\bZ_2^5$-grading.

Here again $\frg_0=0$, and for any $\mu\in\bZ_2^3$:
\begin{equation}\label{eq:e8mu00}
\frg_{(\mu,(\bar 0,\bar 0))}=\tri(S,*,n)_\mu\oplus\tri(S',\star,n')_\mu,
\end{equation}
which is a Cartan subalgebra of $\tri(S,*,n)\oplus\tri(S',\star,n')$, and hence of the whole Lie algebra $\frg$. On the other hand, we have:
\begin{equation}\label{eq:e8muij}
\begin{split}
\frg_{(\mu,(\bar 1,\bar 0))}&=\oplus_{\nu\in\bZ_2^3}\iota_0(S_\nu\otimes S_{\mu+\nu}'),\\
\frg_{(\mu,(\bar 0,\bar 1))}&=\oplus_{\nu\in\bZ_2^3}\iota_1(S_\nu\otimes S_{\mu+\nu}'),\\
\frg_{(\mu,(\bar 1,\bar 1))}&=\oplus_{\nu\in\bZ_2^3}\iota_2(S_\nu\otimes S_{\mu+\nu}').
\end{split}
\end{equation}
All these subspaces are eight dimensional abelian subalgebras of $\frg$ (as $\frg_0=0$). Besides, for any $\mu,\nu\in\bZ_2^3$, $0\ne a\in S_\mu$, $0\ne x\in S_\nu'$, and $i=0,1,2$, let us show that the adjoint map $\ad_{\iota_i(a\otimes x)}$ is a semisimple endomorphism. First note that
\[
[\iota_i(a\otimes x),\iota_i(S_{\hat\mu}\otimes S_{\hat\nu}']=0
\]
if either $\hat\mu\ne \mu$ and $\hat\nu\ne\nu$ or $(\hat\mu,\hat\nu)=(\mu,\nu)$, while for $y\in S_{\hat\nu}'$, $\hat\nu\ne \nu$:
\[
\begin{split}
[\iota_i(a\otimes x),[\iota_i(a\otimes x)&,\iota_i(a\otimes y)]]\\
 &=[\iota_i(a\otimes x),2n(a)\theta'^i(t_{x,y}')]\\
 &=-2n(a)\iota_i(a\otimes\sigma_{x,y}(x))=-4n(a)n'(x)\iota_i(a\otimes y).
\end{split}
\]
Note that $n(a)\ne 0\ne n'(x)$ because all the homogeneous spaces in these $\bZ_2^3$-gradings are nonisotropic.

Similarly,
\[
\ad_{\iota_i(a\otimes x)}^2(\iota_i(b\otimes x))=-4n(a)n'(x)\iota_i(b\otimes x)
\]
for $b\in S_{\hat\mu}$, $\hat\mu\ne\mu$.

Also, for $(d_0,d_1,d_2)\in \tri(S,*,n)$:
\[
\begin{split}
\ad_{\iota_i(a\otimes x)}((d_0,d_1,d_2))&=-\iota_i(d_i(a)\otimes x),\\[2pt]
\ad_{\iota_i(a\otimes x)}^2((d_0,d_1,d_2))
 &=-[\iota_i(a\otimes x),\iota_i(d_i(a)\otimes x)]\\
 &=-2n'(x)\theta^i(t_{a,d_i(a)})\quad\text{as $n(a,d_i(a))=0$,}\\[2pt]
\ad_{\iota_i(a\otimes x)}^3((d_0,d_1,d_2))
 &=2n'(x)\iota_i(\sigma_{a,d_i(a)}(a)\otimes x)\\
 &=4n(a)n'(x)\iota_i(d_i(a)\otimes x),
\end{split}
\]
so $\ad_{\iota_i(a\otimes x)}^3=-4n(a)n'(x)\ad_{\iota_i(a\otimes x)}$ on $\iota_i(S\otimes S')$ and on $\tri(S,*,n)$, and with the same arguments this works too on $\tri(S',\star,n')$.

On the other hand, for any $b\in S$ and $y\in S'$:
\[
\begin{split}
\ad_{\iota_i(a\otimes x)}^2(\iota_{i+1}(b\otimes y))
 &=[\iota_i(a\otimes x),\iota_{i+2}(a*b\otimes x\star y)]\\
 &=-\iota_{i+1}\bigl((a*b)*a\otimes (x*y)*x\bigr)\\
 &=-n(a)n'(x)\iota_{i+1}(b\otimes y),
\end{split}
\]
and thus the restriction of $\ad_{\iota_i(a\otimes x)}^2$ to $\iota_{i+1}(S\otimes S')\oplus\iota_{i+2}(S\otimes S')$ is $-n(a)n'(x)$ times the identity.

The conclusion is that the eight dimensional abelian subalgebras $\frg_{(\mu,(\bar 1,\bar 0))}$, $\frg_{(\mu,(\bar 0,\bar 1))}$ and $\frg_{(\mu,(\bar 1,\bar 1))}$, for $\mu\in\bZ_2^3$, are all toral subalgebras, and hence Cartan subalgebras of $\frg=\frg(S,S')$. Therefore:

\begin{proposition}\label{pr:E8Dempwolff}
The $\bZ_2^5$-grading on the simple Lie algebra $\frg(S,S')$ of type $E_8$ given by \eqref{eq:e8mu00} and \eqref{eq:e8muij} is a Dempwolff decomposition.
\end{proposition}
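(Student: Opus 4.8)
The plan is to verify directly that every nonzero homogeneous component of this $\bZ_2^5$-grading is a Cartan subalgebra of $\frg=\frg(S,S')$, which is exactly what it means for the grading to be a Dempwolff decomposition. Since $\frg_0=0$ and $\dim\frg=248=31\cdot 8$, while a dimension count using \eqref{eq:e8mu00}, \eqref{eq:e8muij} and the gradings of $\fro(S,n),\fro(S',n')$ in \eqref{eq:oCnZ23} (together with the fact that the homogeneous pieces of the $\bZ_2^3$-gradings on $\bar C$ and $\bar C'$ are one-dimensional) shows that each of the $31=2^5-1$ nonzero components has dimension exactly $8$, it suffices to prove that each of them is an $8$-dimensional toral (i.e.\ abelian, consisting of ad-semisimple elements) subalgebra of $\frg$: an $8$-dimensional toral subalgebra of a simple Lie algebra of rank $8$ is maximal toral, hence self-centralizing, hence a Cartan subalgebra.

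For the components $\frg_{(\mu,(\bar 0,\bar 0))}=\tri(S,*,n)_\mu\oplus\tri(S',\star,n')_\mu$ with $0\ne\mu\in\bZ_2^3$ this is immediate: under the isomorphisms $\pi_0$ of the Principle of Local Triality (Theorem \ref{th:triality}) the induced gradings on $\fro(S,n)$ and $\fro(S',n')$ are the ones of \eqref{eq:oCnZ23}, whose nonzero homogeneous components are Cartan subalgebras, so $\tri(S,*,n)_\mu$ is a Cartan subalgebra of $\tri(S,*,n)$ and likewise for $S'$; hence $\frg_{(\mu,(\bar 0,\bar 0))}$ is a Cartan subalgebra of the subalgebra $\tri(S,*,n)\oplus\tri(S',\star,n')$, which has maximal rank $4+4=8$ in $\frg$, and therefore a Cartan subalgebra of $\frg$. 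For the remaining components $\frg_{(\mu,(\bar 1,\bar 0))}$, $\frg_{(\mu,(\bar 0,\bar 1))}$, $\frg_{(\mu,(\bar 1,\bar 1))}$ ($\mu\in\bZ_2^3$), the relation $[\frg_\lambda,\frg_\lambda]\subseteq\frg_{2\lambda}=\frg_0=0$ shows they are abelian, and torality follows from the computations recorded just before the statement: for $0\ne a\in S_\nu$, $0\ne x\in S'_{\nu'}$ and $i\in\{0,1,2\}$ one has $\ad_{\iota_i(a\otimes x)}^3=-4n(a)n'(x)\,\ad_{\iota_i(a\otimes x)}$ on $\tri(S,*,n)$, on $\tri(S',\star,n')$ and on $\iota_i(S\otimes S')$, and $\ad_{\iota_i(a\otimes x)}^2=-n(a)n'(x)\,\mathrm{id}$ on $\iota_{i+1}(S\otimes S')\oplus\iota_{i+2}(S\otimes S')$; since these subspaces exhaust $\frg$, since $n(a)n'(x)\ne 0$ because all the homogeneous pieces of the $\bZ_2^3$-gradings on $\bar C,\bar C'$ are nonisotropic, and since $X\bigl(X^2+4n(a)n'(x)\bigr)$ and $X^2+n(a)n'(x)$ are separable in characteristic $\ne 2$, each generator $\ad_{\iota_i(a\otimes x)}$ is semisimple and the component is an $8$-dimensional toral subalgebra. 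Assembling the two families, all $31$ nonzero homogeneous components are Cartan subalgebras.

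The eigenvalue bookkeeping is routine and already done above; the only step deserving a word is the implication ``abelian, $8$-dimensional, consisting of semisimple elements $\Rightarrow$ Cartan subalgebra of $E_8$'', which I would justify exactly as in the proof of Theorem \ref{th:toral}: in a finite-dimensional simple Lie algebra a toral subalgebra lies in a maximal one of dimension equal to the rank, so an $8$-dimensional toral subalgebra of $\frg$ is maximal toral and hence self-centralizing, i.e.\ a Cartan subalgebra. (In characteristic $0$ the hypothesis $\frg_0=0$ already forces every $\frg_\mu$ to consist of semisimple elements, so only the rank count is needed; compare the remark following Corollary \ref{co:tmutoral}.) I do not anticipate any genuine obstacle beyond this assembly, since the substantive identities have been established in the paragraphs preceding the statement.
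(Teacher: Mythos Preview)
Your proposal is correct and follows essentially the same approach as the paper: the argument is precisely the computations carried out in the paragraphs immediately preceding the proposition (semisimplicity of $\ad_{\iota_i(a\otimes x)}$ via the identities $\ad^3=-4n(a)n'(x)\ad$ and $\ad^2=-n(a)n'(x)\,\mathrm{id}$, together with the fact that the $(\mu,(\bar 0,\bar 0))$-components are already Cartan subalgebras of $\tri(S)\oplus\tri(S')$). Your explicit remark that the relevant minimal polynomials are separable and your spelling out of the step ``$8$-dimensional toral $\Rightarrow$ Cartan subalgebra in rank $8$'' make the argument slightly more self-contained, but there is no genuine difference in method.
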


\smallskip

Thus, Dempwolff decompositions appear naturally related to the $\bZ_2^3$-gradings on Cayley algebras.

It must be remarked here that Thompson proved in \cite{Thompson} that the automorphism group of the simple complex Lie algebra of type $E_8$ acts transitively on Dempwolff decompositions.

A final comment is in order here:

\begin{remark}\label{re:Jordangradings}
As mentioned in \cite[Chapter 3. \S 3.13]{LGLAIII}, Alekseevskij \cite{Alek} classified all the \emph{Jordan gradings} on the exceptional complex simple Lie algebras. These are gradings in which the zero homogeneous space is trivial, and all the other homogeneous spaces have the same dimension and consist of semisimple elements. The grading over $\bZ_2^3$ of type $(0,7)$ of the simple Lie algebra of type $G_2$ in the paragraph previous to Proposition \ref{pr:Z23D4}, the $\bZ_3^3$-gradings of types $(0,26)$ and $(0,0,26)$ of the simple Lie algebras of types $F_4$ and $E_6$ respectively in Theorem \ref{th:toral}, as well as the Dempwolff decomposition of $E_8$ in Proposition \ref{pr:E8Dempwolff} exhaust these Jordan gradings with the exception of a $\bZ_5^3$-grading of $E_8$, in which all homogeneous spaces have dimension $2$. This is the only Jordan grading that seems not to be related to gradings on composition algebras.
\end{remark}



\def\cprime{$'$}
\providecommand{\bysame}{\leavevmode\hbox to3em{\hrulefill}\thinspace}
\providecommand{\MR}{\relax\ifhmode\unskip\space\fi MR }

\end{document}